\documentclass[a4paper,twoside,10pt]{article}
	\usepackage{a4wide}
	\usepackage{graphicx}
	\usepackage{xcolor}
	\usepackage{amsmath,amssymb,amsthm,amsfonts,mathtools,amsbsy}
	\usepackage{todonotes}
	\usepackage{orcidlink}
	\usepackage{hyperref}
	\hypersetup{colorlinks=true,linkcolor=blue!60!black,filecolor=purple!50!black,citecolor=blue!50!green}
	\usepackage[noabbrev,nameinlink,capitalise]{cleveref}
	\usepackage{tikz}
	\usetikzlibrary{positioning,calc,decorations.pathreplacing}
	\usepackage{caption}
	\usepackage{enumitem}

\newenvironment{eqs} %
 { \begin{equation} \begin{aligned} } %
 { \end{aligned} \end{equation} \ignorespacesafterend } %

\makeatletter
\newtheorem{theorem}{Theorem}[section]
\newtheorem{lemma}[theorem]{Lemma}

\newtheorem{corollary}[theorem]{Corollary}

\newtheorem{remark}[theorem]{Remark}
\crefname{assumption}{Assumption}{Assumptions}

\makeatother
\newtheorem*{remark*}{Remark}

\newtheorem*{acknowledgment*}{Acknowledgment}

\newcommand{\CA}{C_A}
\newcommand{\Ca}{C_a}
\newcommand{\ca}{c_a}
\newcommand{\cA}{c_A}
\newcommand{\cAP}{c_{A_0}}

\newcommand{\const}[1]{C_{\eqref{#1}}}

\newcommand{\bpm}{\begin{pmatrix}}
\newcommand{\epm}{\end{pmatrix}}

\newcommand{\Vsh}{V_{\!\star \hspace*{-0.2mm}h}}

\newcommand{\dgsnorm}[1]{\norm{#1}_{\Vsh}}
\newcommand*{\abs}[1]{\left|#1\right|}

\newcommand{\jmp}[1]{[\![ #1 ]\!]}

\newcommand{\AK}{{A_K}}
\newcommand{\AKO}{{A_{\tilde K,0}}}
\newcommand{\Lh}{\mathbb{L}_h}

\newcommand{\LKh}{\mathbb{L}_h(K)}

\DeclareMathOperator{\Div}{div}

\DeclareMathOperator{\grad}{\nabla}

\DeclareMathOperator{\Lap}{\Delta}

\renewcommand{\dim}{\operatorname{dim}}

\newcommand{\eps}{\varepsilon}
\newcommand{\inner}[1]{( #1 )}

\newcommand*{\norm}[1]{\left\|#1\right\|}

\newcommand\restr[2]{{ \left.\kern-\nulldelimiterspace #1 \vphantom{\big|} \right|_{#2} }}

\newcommand{\IL}{\mathbb{L}}

\newcommand{\IP}{\mathbb{P}}

\newcommand{\IR}{\mathbb{R}}

\newcommand{\IT}{\mathbb{T}}

\newcommand{\calD}{\mathcal{D}}
\newcommand{\calE}{\mathcal{E}}

\newcommand{\calI}{\mathcal{I}}

\newcommand{\calK}{\mathcal{K}}

\newcommand{\bn}{\mathbf{n}}

\newcommand{\bw}{\boldsymbol{w}}
\newcommand{\bx}{\mathbf{x}}

\definecolor{pscol}{rgb}{0.8,0,0}

\usepackage{pgfplots} 
\pgfplotsset{compat=newest}
\usepgfplotslibrary{colorbrewer,groupplots}
\pgfplotsset{
	discard if not/.style 2 args={
	x filter/.append code={
	\edef\tempa{\thisrow{#1}}
	\edef\tempb{#2}
	\ifx\tempa\tempb
	\else
	
	\fi
	}
	}
}
\usepackage{pgfplotstable} 
\usepackage{booktabs} 
\usepackage{colortbl}
\makeatletter
\pgfplotstableset{
	discard if not/.style 2 args={
	row predicate/.append code={
	\def\pgfplotstable@loc@TMPd{\pgfplotstablegetelem{##1}{#1}\of}
	\expandafter\pgfplotstable@loc@TMPd\pgfplotstablename
	\edef\tempa{\pgfplotsretval}
	\edef\tempb{#2}
	\ifx\tempa\tempb
	\else
	\pgfplotstableuserowfalse
	\fi
	}
	}
}
\makeatother

\pgfplotscreateplotcyclelist{paulcolors1}{%
	orange,every mark/.append style={solid,fill=orange},mark=square*,very thick,mark size=3pt\\%
	teal,every mark/.append style={solid,fill=teal},mark=square*,very thick,mark size=3pt\\%
    violet,every mark/.append style={solid,fill=violet},mark=square*,very thick,mark size=3pt\\%
}
\pgfplotscreateplotcyclelist{paulcolors2}{%
	orange,every mark/.append style={solid,fill=orange},mark=square*,very thick,mark size=3pt\\%
	orange,densely dashed,every mark/.append style={solid,fill=orange},mark=*,very thick,mark size=3pt\\%
	teal,every mark/.append style={solid,fill=teal},mark=square*,very thick,mark size=3pt\\%
	teal,densely dashed,every mark/.append style={solid,fill=teal},mark=*,very thick,mark size=3pt\\%
    violet,every mark/.append style={solid,fill=violet},mark=square*,very thick,mark size=3pt\\%
    violet,densely dashed,every mark/.append style={solid,fill=violet},mark=*,very thick,mark size=3pt\\%
}

\pgfplotscreateplotcyclelist{paulcolors3}{%
	orange,every mark/.append style={solid,fill=orange},mark=square*,very thick,mark size=3pt\\%
	orange,densely dashed,every mark/.append style={solid,fill=orange},mark=*,very thick,mark size=3pt\\%
	orange,dotted,every mark/.append style={solid,fill=orange},mark=diamond*,very thick,mark size=3pt\\%
	teal,every mark/.append style={solid,fill=teal},mark=square*,very thick,mark size=3pt\\%
	teal,densely dashed,every mark/.append style={solid,fill=teal},mark=*,very thick,mark size=3pt\\%
	teal,dotted,every mark/.append style={solid,fill=teal},mark=diamond*,very thick,mark size=3pt\\%
	violet,every mark/.append style={solid,fill=violet},mark=square*,very thick,mark size=3pt\\%
	violet,densely dashed,every mark/.append style={solid,fill=violet},mark=*,very thick,mark size=3pt\\%
	violet,dotted,every mark/.append style={solid,fill=violet},mark=diamond*,very thick,mark size=3pt\\%
}

	\usepackage{tabularray}
	\UseTblrLibrary{booktabs}
	
	\usepackage{orcidlink}
	
	\usepackage{tcolorbox}
	\usepackage{pgf}
	\usepackage{pgfplots} 
	\numberwithin{equation}{section}

	\definecolor{cborange}{HTML}{ff7f0e}
	\definecolor{cbblue}{HTML}{1f77b4}
	\definecolor{cbgreen}{HTML}{2ca02c}
	\pgfplotscreateplotcyclelist{colorshp}{%
		cbblue,every mark/.append style={solid,fill=cbblue},mark=square*,very thick,mark size=3pt\\%
		cborange, every mark/.append style={solid,fill=cborange},mark=*,very thick,mark size=3pt\\%
		cbgreen,every mark/.append style={solid,fill=cbgreen},mark=square*,very thick,mark size=3pt\\%
		purple, every mark/.append style={solid,fill=purple},mark=*,very thick,mark size=3pt\\%
	}
	
	\pgfplotscreateplotcyclelist{colorsh}{%
		cbblue,every mark/.append style={solid,fill=cbblue},mark=square*,very thick,mark size=3pt\\%
		cborange, every mark/.append style={solid,fill=cborange},mark=*,very thick,mark size=3pt\\%
	cbblue,every mark/.append style={solid,fill=cbblue},mark=square*,very thick,mark size=3pt\\%
		cborange, every mark/.append style={solid,fill=cborange},mark=*,very thick,mark size=3pt\\%
	}

	\pgfplotsset{
		discard if not/.style 2 args={
			x filter/.append code={
				\edef\tempa{\thisrow{#1}}
				\edef\tempb{#2}
				\ifx\tempa\tempb
				\else
				
				\fi
			}
		}
	}
	\pgfplotsset{
		discard if/.style 2 args={
			x filter/.append code={
				\edef\tempa{\thisrow{#1}}
				\edef\tempb{#2}
				\ifx\tempa\tempb
				
				\fi
			}
		},}   
	\pgfplotsset{compat=1.16}%
	
	\newcommand{\CycleNextGruoupPloth}[2]
	{\nextgroupplot[ylabel={$\norm{u-u_h}_{\Vh}$},xlabel={$\sqrt{\mathrm{N}_{\mathrm{DoFs}}}$},title={$p=#1$}, title style={font=\large},
		label style={font=\large},
		tick label style={font=\large},
		legend style={font=\large}]
		\addplot+[discard if not={p}{#1},discard if not={method}{dg}] table [x=ndof12, y=dgerror, col sep=comma] {results/example1.csv};
		\addplot+[discard if not={p}{#1},discard if not={method}{embt}] table [x=ndof12, y=dgerror, col sep=comma] {results/example1.csv};
		\addplot+[discard if not={p}{#1},discard if not={method}{dg},discard if={#2}{0}, only marks, mark size=2.5pt,
	visualization depends on=\thisrow{#2} \as \labela,
	nodes near coords=\pgfmathprintnumber{\labela}
	,
	every node near coord/.append style={
	text=cbblue,
	font=\bfseries\boldmath\small,
		inner sep=1pt,
		xshift=0.6ex,
		yshift=4.1ex,
		scale=1,/pgf/number format/fixed,
		/pgf/number format/precision=2,/pgf/number format/fixed zerofill}
	] table [x=ndof12, y=dgerror, col sep=comma] {results/example1.csv};
	\addplot+[discard if not={p}{#1},discard if not={method}{embt},discard if={#2}{0}, only marks,
	visualization depends on=\thisrow{#2} \as \labela,
	nodes near coords=\pgfmathprintnumber{\labela}
	,
	every node near coord/.append style={
	text=cborange,
	font=\bfseries\boldmath\small,
		inner sep=1pt,
		xshift=-5.5ex,
		yshift=2ex,
		scale=1,/pgf/number format/fixed,
		/pgf/number format/precision=2,/pgf/number format/fixed zerofill}
	] table [x=ndof12, y=dgerror, col sep=comma] {results/example1.csv};
	\legend{$\Vh$, $\IT$}
	}
	
	\renewcommand{\bx}{\boldsymbol{x}}

	\newcommand{\R}{\mathbb{R}}
	\newcommand{\N}{\mathbb{N}}
	\newcommand{\Vh}{\mathbb{V}_{\! h}}
	\newcommand{\ITh}{\mathbb{T}_{\mspace{-1.5mu} h}}
	
	\renewcommand{\Vsh}{\mathbb{V}_{*h}}
	\newcommand{\VK}{\mathbb{V}_h(K)}
	\newcommand{\uh}{u_h}
	\newcommand{\vh}{v_h}
	
	\newcommand{\hperp}{h_{\perp}}
	\newcommand{\FK}{F_K}
	\newcommand{\GK}{G_K}
	\newcommand{\DK}{D_K}
	\newcommand{\dK}{d_K}
	\newcommand{\Kt}{\widetilde{K}}
	\newcommand{\Kh}{\widehat{K}}
	\newcommand{\hKot}{h_{1,K}}
	\newcommand{\hKtt}{h_{2,K}}
	\newcommand{\xt}{\widetilde{x}}

	\newcommand{\mvl}[1]{\{\!\!\{#1\}\!\!\}}
	\newcommand{\Norm}[2]{\|#1\|_{#2}}
	\newcommand{\ah}{a_h}
	\newcommand{\lh}{\ell_h}
	
	\usepackage{soul}

\begin{document}
	
	\title{Embedded Trefftz DG method for reaction--diffusion problems on anisotropic meshes}
	
	\author{Sergio G\'omez\thanks{Department of Mathematics and Applications, University of Milano--Bicocca, 20125 Milan, Italy \mbox{(\href{mailto:sergio.gomezmacias@unimib.it}{sergio.gomezmacias@unimib.it})} } \thanks{IMATI-CNR ``Enrico Magenes", Via Ferrata 5, 27100, Pavia, Italy}
		\and 
		Chiara Perinati\thanks{Department of Mathematics, University of Pavia, 27100 Pavia, Italy (\href{mailto:chiara.perinati01@universitadipavia.it}{chiara.perinati01@universitadipavia.it})}
		\and 
		Paul Stocker\thanks{Faculty of Mathematics, University of Vienna, 1090 Vienna, Austria (\href{mailto:paul.stocker@univie.ac.at}{paul.stocker@univie.ac.at})}
		\and	
		Igor Voulis\thanks{Institut für Numerische und Angewandte Mathematik, Georg-August-Universität Göttingen, Germany (\href{mailto:i.voulis@math.uni-goettingen.de}{i.voulis@math.uni-goettingen.de})} 
	}
	\date{}
	
	\maketitle

	\begin{abstract}
	\noindent
	We present and analyze an embedded Trefftz discontinuous Galerkin method for reaction--diffusion problems on anisotropic meshes. 
	The method is constructed by imposing a relaxed local Trefftz condition via an embedding into a tensor-product DG space, yielding a reduced global system while preserving the approximation properties of the underlying high-order discretization.
	We prove 
	stability and quasi-optimality on anisotropic, possibly curved, quadrilateral elements, and derive anisotropic a priori error estimates.
	Numerical experiments for $h$- and $hp$-refinement, including curved-domain examples, validate the theoretical results.
	\end{abstract}
	\paragraph{Keywords.} Anisotropic meshes, embedded Trefftz methods, discontinuous Galerkin methods, error analysis.
	\paragraph{Mathematics Subject Classification.} 
    65N30, 65N12, 65N15, 35J25.

	\section{Introduction}\label{sect::introduction}
	
		Anisotropic mesh refinement is a classical tool for resolving directional features, layers, and strongly nonuniform solution behavior,
        and its analysis is well developed
		for conforming finite element methods, see, e.g.,~\cite{apel99,AD92ai,AL96am,FP03ae},
        as well as for discontinuous Galerkin (DG) methods, see~\cite{G03dm,G06hp,GHH07hp,GHH09ap,GHH08ar}.
		
		DG methods provide a flexible framework for the design of high-order methods on general meshes.
		Trefftz DG methods 
		 reduce the number of degrees of freedom by using local approximation spaces that are piecewise contained in the kernel of the underlying differential operator.
		Trefftz DG methods have been widely studied in the context of wave propagation problems; cf.~\cite{CHP19ha,HMP14lr,HMP16sv,KSTW14dg}.
		For more general elliptic partial differential equations (PDEs), however, the construction of suitable Trefftz-type spaces is much less straightforward.
		Some recent progress in this direction has been achieved by quasi-Trefftz techniques; see, e.g.,~\cite{IMPS25qe,P23qd}.
		
		The \emph{embedded Trefftz} approach, introduced in \cite{LS_IJMNE_2023,lozinski19}, overcomes this difficulty in a systematic way by constructing a suitable embedding, which embeds a subspace with a relaxed Trefftz property into a standard DG space.
		It thereby provides a general framework for Trefftz-type DG methods that remains effective even when the kernel of the PDE operator is not explicitly known, or is trivial.
        Such a technique has been successfully applied to several classes of PDEs, including elliptic problems, wave propagation, and fluid dynamics problems; see \cite{GPS_JSC_2025,H24,HLSW23ut,LLS_NM_2024,LLSV_ARXIV_2024,LS_IJMNE_2023,S23,SV_ARXIV_2026}.
		
		\paragraph{Motivation.}
		To motivate the use of the proposed embedded Trefftz DG method on anisotropic quadrilateral and hexahedral meshes, we compare the resulting global algebraic systems with those of standard tensor-product DG and hybridizable DG (HDG) methods.
		For HDG, all element-interior degrees of freedom are condensed, so that only facet degrees of freedom remain globally coupled.
		As a rough proxy for computational cost, Table~\ref{tab:ndof-nnze} reports the number of globally coupled degrees of freedom and nonzero matrix entries per element.
		The results show that embedded Trefftz discretizations are competitive with HDG in two space dimensions and yield the smallest global systems in three space dimensions.
		This highlights their potential for reducing global algebraic complexity while retaining approximation properties; see \cite{LS_IJMNE_2023,LSZ_PAMM_2024} for further comparisons.
		\newcommand{\ncdof}{\texttt{ncdof}}
		\newcommand{\nnze}{\texttt{nnze}}
		\begin{table}[!ht]
			    \centering
			    \begin{tblr}{
  colspec = {Q[l]| *{5}{Q[r]} | *{5}{Q[r]}},
  row{4,7} = {bg=gray!10},
  row{2} = {font=\bfseries},
}
\toprule
& \SetCell[c=5]{c}{\textbf{2D: quadrilateral meshes}}  & & & & & \SetCell[c=5]{c}{\textbf{3D: hexahedral meshes}}\\
$\text{method} \downarrow~\!\!\setminus~\!
p\!\rightarrow$ \!\!\! & 2 & 3 & 4 & 5 & 6 & 2 & 3 & 4 & 5 & 6\\
\midrule
$\texttt{ncdof}_{\text{DG}}/N_{\text{El}}$\!\! & \!9 & \!16 & \!25 & \!36 & \!49 & \!27 & \!64 & \!125 & \!216 & \!343\\
$\texttt{ncdof}_{\text{TDG}}/N_{\text{El}}$\!\! & \!5 & \!7 & \!9 & \!11 & \!13 & \!9 & \!16 & \!25 & \!36 & \!49\\
$\texttt{ncdof}_{\text{HDG}}/N_{\text{El}}$\!\! & \!6 & \!8 & \!10 & \!12 & \!14 & \!27 & \!48 & \!75 & \!108 & \!147\\
\midrule
$\texttt{nnze}_{\text{DG}}/N_{\text{El}}$ & \!405 & \!1280 & \!3125 & \!6480 & \!12005 & \!5103 & \!28672 & \!109375 & \!326592 & \!823543\\
$\texttt{nnze}_{\text{TDG}}/N_{\text{El}}$ & \!125 & \!245 & \!405 & \!605 & \!845 & \!567 & \!1792 & \!4375 & \!9072 & \!16807\\
$\texttt{nnze}_{\text{HDG}}/N_{\text{El}}$ & \!126 & \!224 & \!350 & \!504 & \!686 & \!2673 & \!8448 & \!20625 & \!42768 & \!79233\\
\bottomrule
\end{tblr}

			    \caption{Number of globally coupled degrees of freedom $\ncdof$ and nonzero matrix entries $\nnze$ per element for tensor-product DG, embedded Trefftz DG, and HDG, with polynomial degree $p$.
}
			    \label{tab:ndof-nnze}
			\end{table}   
		\vspace{-2em}
		
		\paragraph{Contributions.}
		In this work, we study an embedded Trefftz discontinuous Galerkin method for singularly perturbed reaction--diffusion problems on anisotropic quadrilateral meshes.
		The method combines the approximation advantages of anisotropic DG discretizations with the reduction of globally coupled degrees of freedom provided by the embedded Trefftz construction.
		
		The main novelty is the design of a \emph{local test space compatible with tensor-product polynomial bases}.
		This yields stability of the embedded Trefftz formulation in the anisotropic setting   and 
		provides a \emph{first systematic route to embedded Trefftz methods formulated in tensor-product polynomial spaces}, rather than in standard (simplicial) polynomial bases, thereby 
		opening the door to corresponding constructions on hexahedral meshes.
		
		The analysis of the scheme combines tools from the embedded Trefftz DG abstract framework~\cite{LLSV_ARXIV_2024} and the anisotropic setting~\cite{G03dm}.
		We prove well-posedness and quasi-optimality of the method on anisotropic, possibly curved, quadrilateral elements,  and derive anisotropic a priori error estimates. 
		Numerical experiments illustrate the performance of the method under $h$- and $hp$-refinement, including curved domain examples and comparisons with the standard DG scheme.
		\vspace{-2em}
		
		\paragraph{Outline.}
		The paper is organized as follows.
		\cref{sec:DG} 
		introduces the anisotropic quadrilateral mesh setting, the discrete tensor-product spaces, the interior penalty DG formulation, and the corresponding embedded Trefftz DG method.
		\Cref{sec:stability} is devoted to the analysis of
		the method: we study the stability of the local operator on \emph{pullback} and physical elements, prove coercivity and continuity of the global bilinear form, and 
		derive well-posedness, quasi-optimality, and anisotropic a priori error estimates.
		In \cref{sect::numericalexperiments}, we present numerical experiments, including 
		$h$- and $hp$-convergence, curved-domain examples, and an anisotropic 
		diffusion problem.
		
    \subsection{Model problem}\label{sect::modelproblem}
	Let $\Omega\subset \mathbb{R}^2$ be a bounded Lipschitz domain with boundary $\partial\Omega$.
	Given a diffusivity parameter $\varepsilon>0$, a source term $f\in L^2(\Omega)$, and a Dirichlet datum $g\in H^{1/2}(\partial\Omega)$, we consider the following reaction--diffusion boundary value problem: find $u:\Omega \to \IR$ such that
	\begin{eqs}\label{eq:PDE}
		-\eps^2 \Lap u + u = f \quad &\text{in } \Omega,\\
		u = g \quad &\text{on } \partial\Omega.
	\end{eqs}
	Let $ H^1_g(\Omega):=\{v\in H^1(\Omega)\mid v_{|_{\partial\Omega}}=g\} $ denote the %
    convex set of $H^1(\Omega)$ functions satisfying the Dirichlet condition in the trace sense.
	The variational formulation of \eqref{eq:PDE} reads as follows: find $u\in H^1_g(\Omega)$ such that
		\begin{equation*}\label{eq:abstract}
		a(u,v): =	\int_\Omega \left(\eps^2 \nabla u\cdot \nabla v + uv\right)=  \int_\Omega fv
			\qquad \forall v\in H^1_0(\Omega).
		\end{equation*}
	The bilinear form $a(\cdot,\cdot)$ is coercive and continuous with respect to the energy norm
	$(\varepsilon^2 \|\nabla v\|_{L^2(\Omega)}^2 + \|v\|_{L^2(\Omega)}^2)^\frac12$. By the Lax--Milgram theorem, the variational problem~\eqref{eq:abstract} admits a unique weak solution $u\in H^1_g(\Omega)$. 

	We are particularly interested in the singularly perturbed regime $0 < \varepsilon \ll 1$, where the diffusion term is dominated by the reaction term. In this case, the solution may exhibit sharp boundary layers and strongly anisotropic features. Standard isotropic meshes may therefore become inefficient, whereas anisotropic mesh refinement can resolve such localized features more effectively with fewer degrees of freedom.
	\section{Definition of the method} \label{sec:DG}
	After introducing some standard DG notation and the mesh setting in \cref{ssec:notation}, we define the interior penalty DG formulation for the reaction--diffusion problem in \cref{ssec:DGformulation}, and then introduce the corresponding embedded Trefftz DG method in \cref{ssec:TDGformulation}.
	
	\subsection{Notation and mesh}\label{ssec:notation}
	Let $D\subset \IR^d$ ($d\in\{1,2\}$) be an open bounded Lipschitz set. We denote by $L^2(D)$ the space of Lebesgue square integrable functions on~$D$ with scalar product $(\cdot,\cdot)_{D}$ and norm $\|\cdot\|_{D}$. For $s\in\IR$, $H^s(D)$ denotes the Sobolev space of order $s$  with norm $\|\cdot\|_{H^s(D)}$.
	
	Let $\calK$ be a nonoverlapping partition of the domain $\Omega$ into \emph{curvilinear} quadrilateral elements.
	More precisely, we assume that each element~$K\in\calK$ is the image of the unit square~$\widehat{K}:= (0,1)^2$ through a map~$\GK \circ \FK$.
	The map $\FK : \Kh \to \Kt$ is an affine transformation of the form
	\begin{equation*}
	\FK( \widehat \bx) := \DK \widehat \bx+d_K, \quad \text{ for } \widehat \bx\in\widehat K, \quad \text{ with } 
	\DK=\mathrm{diag}(\hKot, \hKtt)\in \IR^{2\times 2} \quad \text{and} \quad \dK \in \IR^2,
	\end{equation*}
	where $\hKot$ and $\hKtt$ 
	denote the lengths of the edges of the \emph{pullback} tensor-product element~$\Kt = \Kt_1 \times \Kt_2$. Such edges are parallel to the~$\xt_1$-  and~$\xt_2$-axes, respectively; see Figure~\ref{fig:quadmap}.
	\begin{figure}[!htb]
		\begin{center}
			\begin{tikzpicture}[scale=1]
				\draw[thick] (0,0) rectangle (2,2);
				\node at (1,2.2) {};
				\node at (1,-0.2) {};
				\node[rotate=90] at (-0.4,1) {};
				\node[rotate=90] at (2.4,1) {};
							
				\draw[->] (0,0) -- (3,0);
				\draw[->] (0,0) -- (0,3);
				
				\node at (-0.3,3) {$\hat{x}_2$};
				\node at (3,-0.3) {$\hat{x}_1$};
				\node at (1,1) {$\Kh$};
				\node at (-0.3,-0.3) {$(0,0)$};
				\node at (2,-0.3) {$(1,0)$};
				\node at (-0.5,2) {$(0,1)$};
				
				\draw[->, thick] (3.3,1) -- (4.8,1) node[midway, above] {$F_K$};
				\draw[thick] (6.5,1) rectangle (7.4,3.5);
				\node at (6.95,2.25) {$\Kt$};
	            \node at (6.95,1-0.3) {$\hKot$};
	            \node at (7.8,2.25) {$\hKtt$};
	            \draw[->] (5.8,0) -- (5.8,1);
	            \draw[->] (5.8,0) -- (6.8,0);
	            \node at (5.8-0.3,1) {$\xt_2$};
	            \node at (6.8,0-0.3) {$\xt_1$};
	
				\draw[->, thick] (8.3,1) -- (9.8,1) node[midway, above] {$G_K$};
	            \draw[thick]    (11.5,1) -- (12.25,0.5);
	            \draw[thick]    (12.25,0.5) to[out=65,in=-85] (13,2.75);
	            \draw[thick]    (13,2.75) -- (12.25,3.25);
	            \draw[thick]    (12.25,3.25)  to[out=-90,in=70] (11.5,1);
	            \draw[->] (10.8,0) -- (10.8,1);
	            \draw[->] (10.8,0) -- (11.8,0);
				\node at (12.5,2.25) {$K$};
	            \node at (10.8-0.3,1) {$x_2$};
	            \node at (11.8,0-0.3) {$x_1$};
			\end{tikzpicture}
		\end{center}
        \vspace{-1em}
		\caption{Construction of the elements in~$\calK$ via composition of an affine map $F_K$ and diffeomorphism $G_K$.}
		\label{fig:quadmap}
    \vspace{-.5em}
	\end{figure}
	
	The map~$G_K: \widetilde{K} \to K$ is a~$C^1$-diffeomorphism with Jacobian~$J_{\GK} : \Kt \to \R^{2\times 2}$ satisfying 
	\begin{equation}
	\label{eq:GK}
	(C_G^{\diamond})^{-1} \le \det J_{\GK} \le C_G^{\diamond} \quad \text{ and } \quad \Norm{J_{\GK}}{L^{\infty}(\Kt)} \le C_G^* \qquad \forall K \in \calK,
	\end{equation}
	for some positive constants~$C_G^{\diamond}$ and~$C_G^*$ independent of~$K$. 

	We denote by $\partial_1$ and $\partial_2$ the derivatives with respect to the auxiliary coordinates $\xt_1$ and $\xt_2$, respectively.
	For differential operators in auxiliary coordinates, we use the tilde notation, e.g.,\ 
	\begin{equation*}
	    \widetilde{\grad} v := (\partial_1 v, \partial_2 v), 
        \qquad 
         \widetilde{\Delta} v := \partial_1^2 v + \partial_2^2 v,
        \qquad \widetilde{\Div} \bw := \partial_1 w_1 + \partial_2 w_2. 
	\end{equation*}
	
	The set of edges of $\calK$ is denoted by $\calE = \calE^I \cup \calE^D$, where~$\calE^I$ and~$\calE^D$ are the sets of interior and boundary edges, respectively.
	For~$i = 1,2$, we denote by $\calE_i^K$ the union of the two edges of $\partial K$, whose preimages under $G_K$ are parallel to the $\widetilde x_i$-axis, i.e.,\
		\begin{equation*}
			\calE_1^K:=G_K(\Kt_1 \times \partial \Kt_2), \qquad \calE_2^K:=G_K(\partial \Kt_1 \times \Kt_2).
		\end{equation*}
	Moreover, for each element $K\in\calK$ and each edge~$e\in \calE$ of~$K$, 
    given $\Kt = \Kt_1 \times \Kt_2 =G_K^{-1}(K)$, we define the adjacent edge length as follows:
	\begin{equation*}
	    \hperp^{e, \Kt} :=
		\begin{cases}
			\hKtt &  \text{if } e\in\calE^K_1,\\
	        \hKot &   \text{if } e 
	        \in\calE^K_2.
		\end{cases}
	\end{equation*}
	As stressed in our notation, differently from the length of~$e$, the adjacent edge length $\hperp^{e,\Kt}$ depends on the considered element~$\Kt$.
	
	Without loss of generality, we assume that
	\begin{equation}\label{eq:anisotropicity}
		\hKot \le \hKtt.
	\end{equation}
	This can always be achieved by interchanging the roles of $\hKot$ and $\hKtt$, which corresponds to swapping the auxiliary coordinates $\widetilde x_1$ and $\widetilde x_2$ in the definition of the pullback element $\Kt$. This is an orthonormal transformation and therefore does not affect the properties of the mapping $G_K$. 

	For each~$e \in\calE^I$, 
	we set~$\bn_e$ as one of the two unit normal vectors orthogonal to~$e$, and we use the convention that~$\bn_e$ points from~$K^-$ to~$K^+$, where~$K^-$ and~$K^+$ are the neighboring elements in~$\calK$ sharing the edge~$e$.
	In what follows, we use the notation~$\hperp^{e, -}$ and~$\hperp^{e, +}$ to indicate the corresponding adjacent lengths of~$e$.
	Moreover, we 
	shall omit the superscript~$e$ whenever the dependence of the adjacent length
	on~$e$ is clear.
	
	Finally, we define the following weighted average~$(\mvl{\cdot}_{\hperp^e})$ and 
	jump~$(\jmp{\cdot})$ operators for piecewise smooth scalar~$(v)$ and vector-valued~$(\bw)$ functions:
	\begin{align}\label{eq:averages-jumps}
	     \mvl{\bw}_{\hperp^e} &:= \Big(\frac{\hperp^-}{\hperp^+ + \hperp^-}\Big) \bw^- + \Big(\frac{\hperp^+}{\hperp^+ + \hperp^-}\Big) \bw^+ , 
	    \quad \jmp{v} := (v^- - v^+)\bn_e, 
	\end{align}
	where 
	$v^+$ and~$v^-$ denote the traces of~$v$ on~$e$ taken from the interior of $K^+$ and~$K^-$, respectively.
	For boundary edges $e \in \calE^D$ such that $e\subset \partial K$ for some $K\in\calK$, we define the average and jump operators by
	\[
	\mvl{\bw}_{\hperp^e} := \bw|_K, 
	\qquad 
	\jmp{v} := v|_K  \bn_{\Omega},
	\]
	where $\bn_{\Omega}$ denotes the outward unit normal to $\partial\Omega$.
	
	Given a degree of approximation~$p \in \N$ with~$p \geq 1$, the corresponding space of tensor-product polynomials on the pullback element~$\Kt = \Kt_1 \times \Kt_2$ is defined as
	\begin{equation*}
		\IP^p_\otimes(\Kt):= \IP^p(\Kt_1) \otimes \IP^p(\Kt_2),
	\end{equation*}
	where~$\otimes$ denotes the algebraic
	product of vector spaces, and~$\IP^p(I)$ denotes the space of polynomials defined on the interval~$I$ of degree less than or equal to~$p$.
	
	On the physical element~$K \in \calK$, we use the following \emph{pushforward} space: 
	\begin{equation*}
	\VK
	 := \{v \in L^2(K) \mid (v \circ \GK) \in \IP^p_\otimes(\Kt)\}.
	\end{equation*}
	In other words, functions in~$\VK$ are tensor-product polynomials in the auxiliary coordinates $(\xt_1, \xt_2)$, but not necessarily in the physical coordinates~$(x_1, x_2)$. The global discrete space is defined as 
	\begin{equation*}
		\Vh:= \prod_{K \in \calK} \VK.
	\end{equation*}
	Here, and throughout the article, the subscript $h$ is used to denote the discrete quantities for simplicity. The analysis  is carried out in terms of the elemental mesh sizes $\hKot$ and $\hKtt$.
	
	\subsection{DG formulation}\label{ssec:DGformulation}
	Let $\Vsh:=\Vh+H^2(\Omega)$.
	We consider the following interior penalty DG formulation: 
	\begin{equation*}
		\text{Find } \uh \in \Vh \quad \text {s.t.} \quad 
		\ah(\uh, \vh) = \ell_h(\vh) \quad \forall \vh \in \Vh,
	\end{equation*}
	where the bilinear form~$\ah : \Vsh \times \Vh \to \R$ and the linear functional~$\lh : \Vh \to \R$ are given by
	\begin{align}\label{eq:ah}
	\nonumber
		\ah(w, \vh) & := \sum_{K\in \calK} \int_K (\varepsilon^2 \nabla w \cdot \nabla \vh +  w \vh)
		 \\
	\nonumber
	    & \quad +
	\sum_{e \in\calE^{\calI}} \int_{e} (-\mvl{\varepsilon^2 \nabla w}_{\hperp^e} \cdot \jmp{\vh} - \mvl{\varepsilon^2 \nabla \vh}_{\hperp^e} \cdot \jmp{w} +  \sigma\frac{\eps^2}{h^{+}_\perp+h^{-}_\perp} \jmp{w}\cdot\jmp{\vh}) 
	    \\
	    & \quad + \sum_{e\in \calE^{\calD}} \int_{e} (-\varepsilon^2 \nabla w \cdot \bn_{\Omega} \vh - \varepsilon^2 \nabla \vh \cdot \bn_{\Omega} w + \sigma\frac{\eps^2}{\hperp} w \vh) 
	, \\
	    \label{eq:lh}
		\lh(\vh) & := \sum_{K\in \calK} \int_K f \vh 
	 +\sum_{e\in \calE^{\calD}} \int_{e} g (-\varepsilon^2 \nabla \vh \cdot \bn_{\Omega} + \sigma \frac{\eps^2}{\hperp} \vh),
	\end{align}
    with~$\sigma>0$ a ``sufficiently large" dimensionless penalty parameter.
	
	\begin{remark}[Stability term and weighted averages]
	Our choice of the stability term differs from the one used in~\cite[Lemma 8.1]{G06hp}. In that work, the~$\hperp$-dependent factor~$(1/\hperp^+ + 1/\hperp^-)$ is used, which results in a strong dependence of the local contribution from a single element 
	to the energy norm on the adjacent edge lengths of both neighboring elements. That strong dependence would complicate the stability analysis of the proposed embedded Trefftz DG method,  which relies on the study of such local contributions.
	
	On the other hand, the~$\hperp$-dependent factor we employ can be easily bounded as $$\frac{1}{\hperp^+ + \hperp^-} \le \frac{1}{\hperp^{\pm}},$$ 
	thus avoiding the issue mentioned above. Accordingly, the weighted-average parameters in~\eqref{eq:averages-jumps} are defined to guarantee the stability of the global bilinear form~$\ah(\cdot, \cdot)$.
	\end{remark}

	\subsection{Embedded Trefftz DG formulation}\label{ssec:TDGformulation}
	The embedded Trefftz DG method is obtained by restricting the
	discrete solution to a subspace of~$\Vh$ whose elements satisfy,
	in a weak sense,
    equation~\eqref{eq:PDE} elementwise.
	The construction of the embedded Trefftz space relies on two key ingredients on each element $K \in \calK$: a local differential operator $A_K$ and a suitably designed test space $\IL_h(K)$.

	Let $K\in \calK$.
	On the pullback element $\Kt = \Kt_1 \times \Kt_2=G_K^{-1}(K)$, we define
	\begin{equation}\label{def:Lspace}
		\IL_h(\Kt) := \{\tilde v_h \in \IP^p_\otimes(\Kt) \mid \tilde v_h|_{\partial \Kt_1 \times \Kt_2} = 0\}.
	\end{equation}
This space consists of tensor-product polynomials on $\Kt$ that vanish on the two longer edges, i.e.,\ the edges parallel to the $\widetilde x_2$-axis.
We define the local test space on the physical element $K$ as the pushforward of $\IL_h(\Kt)$:
	\begin{equation*}
		\LKh := \{v_h \in \Vh(K) \mid (v_h\circ G_K) \in \IL_h(\Kt)\}.
	\end{equation*}
    \begin{remark}[Design of the test space]\label{rem:testspace}
        The choice of the test space $\IL_h(\Kt)$ in \eqref{def:Lspace} is motivated by the fact that functions in this space satisfy a Poincaré--Steklov inequality which involves only the shorter edge length of the element, see \eqref{eq:SteklovPoincare} below. 
        This is crucial for the	
        local stability 
        analysis of \cref{sec:localstability_pullback}. 
        In two dimensions the $H^1$-norm along the longer edge can be bounded by the $H^1$-seminorm along the shorter edge, in the following sense 
        $\|\partial_2 \tilde u\|_{\Kt} \lesssim \frac{\hKot}{\hKtt} \|\partial_1 \tilde u\|_{\Kt}$, for all $\tilde u \in \IL_h(\Kt).$
        Details are given in \cref{lem:anisotropic-inverse}.
        We want to highlight, that in higher dimensions, this remains true, with the $H^1$-seminorm involving only the derivatives along the shorter edge.
	\end{remark}

Denoting by $\LKh'$ 
the dual space of $\LKh$, we define the local reaction--diffusion operator $\AK : \Vsh|_K \to \LKh'$ as
	\begin{equation*}
		A_K v := -\eps^2 \Lap v +v \qquad \forall v \in \Vsh|_K .
	\end{equation*}
	Then the local embedded Trefftz space is defined by
	\begin{equation*} 
	    \ITh(K) : = \{v_h \in \Vh(K) \mid (\AK v_h, q_h)_{K}=0 \quad \forall q_h\in \LKh\},
	\end{equation*}
	and the global space is given by $\ITh:= \prod_{K \in \calK} \ITh(K)$.
		A straightforward dimension count gives
		\begin{equation*}
			\dim\big(\IT_h(K)\big)=\dim\big(\IP^p_{\otimes}(K)\big)-\dim\big(\IL_h(K)\big)=(p+1)^2-(p+1)(p-1)=2p+2.
		\end{equation*}
		Note that the dimension of the embedded Trefftz space grows only linearly 
		in $p$, in contrast to the $(p+1)^2$ degrees of freedom of the full 
		tensor-product space $\IP^p_\otimes(K)$. This leads to a significant reduction of the globally coupled 
		degrees of freedom, see Table~\ref{tab:ndof-nnze}.
	
	The embedded Trefftz DG method reads
	\begin{eqs}\label{eq:PDEh}
	    \text{Find } u_h \in \Vh
	    \text{ such that } &&\quad\inner{\AK  u_h, v_\IL}_{K}&=\ell_K(v_\IL) &&\quad \forall v_\IL\in \IL_h(K), \ \forall K\in\calK,
	 \\ \text{ and }  && \quad a_h(u_h,v_\IT)&=\ell_h(v_\IT) &&\quad \forall v_\IT \in \ITh,
	\end{eqs}
	where $\ell_K(v):=(f,v)_{K}$, and $a_h(\cdot, \cdot)$ and $\ell_h(\cdot)$ are defined in \eqref{eq:ah} and \eqref{eq:lh}, respectively.
	
	\paragraph{On the implementation.}
	The discrete formulation \eqref{eq:PDEh} is used for the analysis but not in the implementation.
	In practice, the Trefftz space $\ITh$ is never explicitly constructed. 
	Instead, we build an embedding based on the singular value decomposition (SVD) of the small matrices representing the local operators $A_K$. We explicitly construct the local test space $\LKh$ and its pullback $\IL_h(\Kt)$, as they are required to compute the SVD, and thus to construct the embedding.
	We stress that the global degrees of freedom that remain after the embedding are only those associated with 
    the Trefftz space, leading to a significant reduction of the computational cost of the method.
	Further implementation details are provided in~\cref{sect::numericalexperiments}, and we also refer to~\cite{LS_IJMNE_2023}.
	
	\section{Analysis of the embedded Trefftz DG method}\label{sec:stability}
	The analysis proceeds in several steps.
	In \cref{sec:local}, we study stability and continuity of the local operator $A_K$,  and  in~\cref{sec:global}, we 
	prove coercivity and continuity of the global bilinear form 
    $a_h(\cdot, \cdot)$.
	Following the framework of~\cite{LLSV_ARXIV_2024}, these results imply well-posedness and quasi-optimality of the embedded Trefftz DG method \eqref{eq:PDEh}; the corresponding statements are presented in \cref{sec:wp-qo}. 
	Finally, in \cref{sec:apriori}, we derive anisotropic a priori error estimates.
	
	In the rest of the paper, we use the notation $\lesssim$ for inequalities where we 
    omit
    generic constants that are
    independent of $\eps$, $\hKot$, $\hKtt$ for all $K\in\calK$, and of any functions involved in the estimate.
	
	For all $v\in \Vsh=\Vh+H^2(\Omega)$, we define the following  mesh-dependent norms:
	\begin{equation}\label{eq:Vhnorm}
		\begin{split}
	    &\norm{v}_{\Vh}^2:=\sum_{K\in\calK}\left(\varepsilon^2\norm{\nabla v}^2_{K}+\norm{v}^2_{K}\right)+\abs{v}_{\mathrm{J}}^2, \quad
		\abs{v}_{\mathrm{J}}^2:=\sum_{e\in\calE^{\calI}}\int_{e} \frac{\eps^2}{h^{+}_\perp+h^{-}_\perp} \jmp{v}^2
	    +\sum_{e\in\calE^{\calD}}\int_{e} \frac{\eps^2}{\hperp} v^2,\\
	    &\dgsnorm{v}^2:=
		\norm{v}_{\Vh}^2 + 
		\sum_{K\in\calK}\sum_{e\subset\partial  K} \hperp^{e, \Kt} \norm{\eps \nabla  v\cdot \bn}^2_{e}.
	\end{split}
	\end{equation}
For each $K\in\calK$, we define a norm 
in the pullback test space 
$\IL_h(\Kt)$ by
\begin{equation}\label{eq:norm_Lh}
	\|\tilde v\|^2_{\mathbb{L}_h(\Kt)}
	:= \norm{\tilde v}^2_{\Kt}
	+\eps^2\norm{\partial_1\tilde v}^2_{\Kt}
	\qquad \forall \tilde v\in \IL_h(\Kt),
\end{equation}
and equip the test space $\LKh$ with the norm
$\|v\|_{\LKh}:=\|v\circ G_K\|_{\mathbb{L}_h(\Kt)}$.
The dual space $\LKh'$ is endowed with the norm $\|\cdot\|_{\IL_h(K)'}:= \sup_{v\in \IL_h(K)\setminus \{0\}}\frac{\langle\cdot, v\rangle}{\|\cdot\|_{\IL_h(K)}}$, where $\langle \cdot, \cdot \rangle$ denotes the duality pairing between $\IL_h(K)'$ and $\IL_h(K)$.

We follow 
the abstract framework of embedded Trefftz DG methods developed in~\cite{LLSV_ARXIV_2024}, and adapt it to the present setting. 
	Well-posedness of problem~\eqref{eq:PDEh} holds under the following local and global properties:

\paragraph{Local properties.} There exist constants $\cA,\CA>0$, independent of $\hKot$ and $\hKtt$ for all $K\in\calK$, such that
	\begin{align}
		\norm{\AK v_h}_{\LKh'} &\ge \cA \norm{v_h}_{\Vh}
		&&\forall v_h \in \Lh(K),\ \forall K\in\calK, \label{eq:A_coerc}\\
		\sum_{K\in\calK} \norm{\AK v}_{\LKh'}^2 &\le \CA^2 \dgsnorm{v}^2
		&&\forall v\in \Vsh. \label{eq:A_cont}
	\end{align}

	\paragraph{Global properties.} There exist constants $\ca,\Ca>0$, independent of  $\hKot$ and $\hKtt$ for all $K\in\calK$,  such that
	\begin{align}
		a_h(v_h,v_h) &\ge \ca \norm{v_h}_{\Vh}^2
		&&\forall v_h\in \ITh, \label{eq:acoerc}\\
		a_h(v,v_h) &\le \Ca \dgsnorm{v}\,\norm{v_h}_{\Vh}
		&&\forall v\in \Vsh,\ \forall v_h\in \ITh. \label{eq:ah_cont}
	\end{align}
	
We recall the following anisotropic trace inequality from~\cite[Corollary 3.49]{G03dm}, which will be used in the analysis.
	\begin{lemma}[Anisotropic trace inequality]
		Let $K\in\calK$ be a mesh element and let $\Kt = \Kt_1 \times \Kt_2=G_K^{-1}(K)$ be its pullback. Then
		\begin{equation}\label{eq:anisotropictraceineq}
			\norm{\tilde v}_{\Kt_1\times\partial\Kt_2}^2\lesssim \hKtt^{-1}\norm{\tilde v}_{\Kt}^2 
			\ \  \text{ and } \ \
			\norm{\tilde v}_{\partial\Kt_1\times\Kt_2}^2\lesssim \hKot^{-1}\norm{\tilde v}_{\Kt}^2 
			\quad \forall \tilde v \in \IP^p_\otimes(\Kt).
		\end{equation}
	\end{lemma}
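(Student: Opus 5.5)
The statement is an anisotropic trace inequality for tensor-product polynomials on the axis-parallel rectangle $\Kt=\Kt_1\times\Kt_2$. I would reduce it, via the tensor-product structure, to a one-dimensional polynomial trace inequality on an interval and then scale. The two estimates are symmetric under swapping the roles of $\widetilde x_1$ and $\widetilde x_2$, so I only treat the first one, on the edges $\Kt_1\times\partial\Kt_2$.

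\textbf{One-dimensional inequality.} On the reference interval $(0,1)$, every $q\in\IP^p(0,1)$ satisfies
\[
|q(0)|^2+|q(1)|^2\le C_p\,\|q\|_{(0,1)}^2 .
\]
This holds by equivalence of norms on the finite-dimensional space $\IP^p(0,1)$, since point evaluation is a continuous linear functional there. One could even take $C_p=(p+1)^2$, the sharp Warburton--Hesthaven constant, but only finiteness is needed because $p$ is fixed and the hidden constant may depend on $p$. The affine change of variables $\widetilde x_2=a+\hKtt t$ maps $(0,1)$ onto $\Kt_2$ and multiplies $L^2$ norms squared by $\hKtt$. Hence, for every $q\in\IP^p(\Kt_2)$,
\[
|q(\partial\Kt_2)|^2\le C_p\,\hKtt^{-1}\|q\|_{\Kt_2}^2,
\]
where the left-hand side denotes the sum of the squared values at the two endpoints.

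\textbf{Fubini step.} Fix $\widetilde x_1\in\Kt_1$. For $\tilde v\in\IP^p_\otimes(\Kt)$, the function $\widetilde x_2\mapsto\tilde v(\widetilde x_1,\widetilde x_2)$ lies in $\IP^p(\Kt_2)$, which is exactly what the tensor-product structure guarantees. Apply the scaled one-dimensional bound to this function and integrate over $\widetilde x_1\in\Kt_1$. The left-hand side becomes $\|\tilde v\|^2_{\Kt_1\times\partial\Kt_2}$, since these edges are $\Kt_1\times\{\text{endpoint}\}$ with arc-length measure $d\widetilde x_1$. The right-hand side becomes $C_p\hKtt^{-1}\|\tilde v\|^2_{\Kt}$, again by Fubini. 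The second inequality follows identically with the coordinates exchanged, giving the factor $\hKot^{-1}$.

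\textbf{Uniformity and main obstacle.} Neither the aspect ratio nor $\eps$ enters, because each direction is scaled separately. This separate scaling is precisely the anisotropic gain over the isotropic bound, which would involve $\min(\hKot,\hKtt)^{-1}$. The only point needing care is the fiberwise polynomial property. This is immediate for $\IP^p_\otimes$, but it would fail for a general polynomial space on a mapped element. That is why the statement is posed on the pullback $\Kt$ rather than on $K$.
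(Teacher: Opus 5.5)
Your proof is correct. The paper gives no proof of its own and only cites \cite[Corollary 3.49]{G03dm}; your argument (a scaled one-dimensional polynomial trace inequality applied fiberwise in the tensor-product variable, then Fubini) is the standard way such tensor-product estimates are obtained.

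One small slip, in a remark that does not affect the proof: $C_p=(p+1)^2$ is too small for the sum over both endpoints of $(0,1)$. Already $q\equiv 1$ gives the ratio $2$, and $q(t)=1-2t$ gives $6$ for $p=1$. The value $(p+1)^2$ is the sharp constant for a single endpoint, so the two-endpoint sum needs $2(p+1)^2$.
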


	\subsection{Local problem}\label{sec:local}
	To establish the local stability \eqref{eq:A_coerc} of the  operator $A_K $ on $\IL_h(K)$, we first prove in \cref{sec:localstability_pullback} that \eqref{eq:A_coerc} 
    holds for a prototype operator on $\IL_h(\Kt)$.
    Via the perturbation argument of Lemma \ref{lem:abstractneumann}, we then show in~\cref{sec:localstability_physical} that a pulled-back operator  inherits 
	this stability, which  then yields 
	stability of $A_K$ on the physical element.
	Finally, continuity of $A_K$ is established in \cref{sec:localcontinuity}.

	Before proceeding, we state two preliminary results that are used 
	throughout the analysis.

	The specific construction of the test space $\IL_h(\Kt)$ allows us to prove the following anisotropic inverse inequality, which is crucial for the stability analysis of the method.
	\begin{lemma}[Anisotropic inverse inequality for $\IL_h(\Kt)$]\label{lem:anisotropic-inverse}
	Let $K\in\calK$ be a mesh element and let $\Kt = \Kt_1 \times \Kt_2=G_K^{-1}(K)$ be its pullback. 
	Then, there exists a constant $\const{ainv}>0$ independent of $\hKot$ and $\hKtt$ such that 
	\begin{equation}\label{ainv}
	    \|\partial_2 \tilde v\|_{\Kt} \leq \const{ainv} \frac{\hKot}{\hKtt} \|\partial_1 \tilde v\|_{\Kt} \quad \forall \tilde v \in \IL_h(\Kt).
	\end{equation}
	\end{lemma}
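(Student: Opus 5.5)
Because $\IL_h(\Kt)$ is defined purely in tensor-product terms, the plan is to reduce \eqref{ainv} to one-dimensional inverse and Poincaré inequalities on the two factors $\Kt_1$ and $\Kt_2$, after first scaling to the reference square $\Kh$.

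First, pull back to $\Kh=(0,1)^2$ via $\FK$. Write $\hat v=\tilde v\circ \FK$. Then $\partial_i\tilde v = \hKot^{-1}\hat\partial_1\hat v$ or $\hKtt^{-1}\hat\partial_2\hat v$ for $i=1,2$ respectively, and the area element scales by the common factor $\hKot\hKtt$. Hence \eqref{ainv} is equivalent to
\begin{equation*}
\|\hat\partial_2 \hat v\|_{\Kh}\le \const{ainv}\,\|\hat\partial_1\hat v\|_{\Kh}
\qquad \forall \hat v\in \widehat{\IL}:=\{\hat v\in \IP^p_\otimes(\Kh)\mid \hat v|_{\{0,1\}\times(0,1)}=0\}.
\end{equation*}
The whole anisotropy thus disappears: the powers of $\hKot,\hKtt$ on both sides combine exactly into the ratio $\hKot/\hKtt$, as in the statement.

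Second, prove the reference inequality by a norm-equivalence argument on the finite-dimensional space $\widehat{\IL}$. We check that $\hat v\mapsto\|\hat\partial_1\hat v\|_{\Kh}$ is a norm on $\widehat{\IL}$. Suppose $\hat\partial_1\hat v=0$. Then $\hat v$ depends only on $\hat x_2$, and since it vanishes at $\hat x_1=0$, it is identically zero. Meanwhile $\hat v\mapsto \|\hat\partial_2\hat v\|_{\Kh}$ is a seminorm on the same finite-dimensional space, hence is bounded by a multiple of any norm. This gives the inequality with a constant depending only on $p$, and that constant is independent of $\hKot,\hKtt$ (and of $\eps$), as required.

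If an explicit constant is wanted, I would instead use the tensor structure. Write $\hat v=\sum_{j}a_j(\hat x_1)b_j(\hat x_2)$, with $a_j\in\IP^p_0(0,1)$ vanishing at both endpoints and $\{b_j\}$ an $L^2$-orthonormal basis of $\IP^p(0,1)$. Two one-dimensional estimates then combine:
\begin{itemize}
\item the 1D inverse inequality $\|b'\|\lesssim p^2\|b\|$ in $\hat x_2$;
\item the Poincaré inequality $\|a\|\le\pi^{-1}\|a'\|$ for $a\in H^1_0(0,1)$ in $\hat x_1$.
\end{itemize}
Together they yield $\|\hat\partial_2\hat v\|\lesssim p^2\|\hat v\|\lesssim p^2\|\hat\partial_1\hat v\|$.

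The only delicate point is the nondegeneracy step, which uses that $\hat v$ vanishes on the edges $\hat x_1\in\{0,1\}$. This is precisely why the test space imposes zero traces on $\partial\Kt_1\times\Kt_2$: it rules out nonzero functions of $\hat x_2$ alone, and without it the inequality would fail. Everything else is routine scaling.
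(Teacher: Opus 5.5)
Your proof is correct. Your main argument differs from the paper's, while your ``explicit constant'' variant is essentially the paper's proof. You first scale to $\Kh$; this is done correctly, since the area factor $\hKot\hKtt$ cancels and the derivative scalings leave exactly the ratio $\hKot/\hKtt$. You then conclude by finite-dimensional norm equivalence, using that $\|\partial_{\hat x_1}\hat v\|_{\Kh}$ is a norm on the reference test space because of the zero traces on $\{0,1\}\times(0,1)$.

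The paper never changes variables. It works fibrewise on $\Kt$ itself. The one-dimensional Poincar\'e--Steklov inequality on $\Kt_1$ gives $\|\tilde v\|_{\Kt}\lesssim \hKot\|\partial_1\tilde v\|_{\Kt}$. A one-dimensional polynomial inverse estimate on $\Kt_2$ gives $\|\partial_2\tilde v\|_{\Kt}\lesssim \hKtt^{-1}\|\tilde v\|_{\Kt}$. Chaining the two, the ratio $\hKot/\hKtt$ appears directly, because these 1D inequalities already carry the interval-length scalings.

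What each route buys is as follows. Your compactness argument is the shortest and shows cleanly that the constant depends only on $p$. However, it is non-constructive and says nothing about how $\const{ainv}$ grows with $p$. The paper's route, like your second version, gives $\const{ainv}\lesssim p^2$. More importantly for the rest of the paper, it produces the intermediate bound \eqref{eq:SteklovPoincare}. That bound is reused in the perturbation lemma, the physical-element stability corollary, and the continuity proof for $A_K$.

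A minor point on your explicit version: the expansion $\hat v=\sum_j a_j b_j$ is unnecessary. You can apply the inverse inequality to $\hat v(\hat x_1,\cdot)\in\IP^p(0,1)$ for each fixed $\hat x_1$. Likewise, apply the Poincar\'e inequality to $\hat v(\cdot,\hat x_2)\in H^1_0(0,1)$ for each fixed $\hat x_2$, and then integrate over the other variable.
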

	\begin{proof}
	Since $\tilde v\in \mathbb L_h(\Kt)$ vanishes
	on $\partial\Kt_1\times \Kt_2$,
	the Poincaré--Steklov inequality on the interval
	$\Kt_1$ yields
	\begin{equation}\label{eq:SteklovPoincare}
	\|\partial_1 \tilde v\|_{\Kt}^2 = \int_{\Kt_2} \|\partial_1 \tilde v\|_{\Kt_1}^2 \,\mathrm{d}\tilde x_2
	\gtrsim \hKot^{-2} \|\tilde v\|_{\Kt}^2.
	\end{equation}
	Moreover, by standard polynomial inverse estimates, we have
	\begin{equation*}
	\|\partial_2 \tilde v\|_{\Kt}^2 
	= \int_{\Kt_1} \|\partial_2 \tilde v\|_{\Kt_2}^2 \,\mathrm{d}\tilde x_1
	\lesssim \hKtt^{-2} \|\tilde v\|_{\Kt}^2.
	\end{equation*}
	Combining these two inequalities gives the result.
	\end{proof}
	The following 
    lemma strengthens the Cauchy--Schwarz inequality for the second derivative of polynomials.
	The constant $C_p$ in~\eqref{eq:noeigenpoly} below depends adversely on the polynomial degree $p$, and it is expected to approach~$0$ as $p\to\infty$.
	However, for any fixed $p$,
    $C_p$ is strictly larger than $0$.
	\begin{lemma}\label{lem:noeigenpoly}
	Given a polynomial degree $p\in\mathbb{N}$, 
    we have 
	\begin{equation}\label{eq:noeigenpoly}
		|(\phi'', \phi)_{[0,1]} | \leq (1-C_p)\,\norm{\phi''}_{[0,1]} \,\norm{\phi}_{[0,1]} \qquad \forall \phi \in \IP^p([0, 1]), 
		\end{equation}
		for some constant $0<C_p<1$ depending only on $p$. 
		\end{lemma}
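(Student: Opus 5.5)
The bound follows from compactness plus homogeneity: we show that equality in the Cauchy--Schwarz inequality cannot occur for a nonzero polynomial, and then use that $\IP^p([0,1])$ is finite dimensional to obtain a uniform gap.

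\textbf{Reduction to a compact set.} If $\phi''=0$ or $\phi=0$, both sides of \eqref{eq:noeigenpoly} vanish, so the inequality holds for any $C_p$. Otherwise, both sides are homogeneous of degree two under the scalings $\phi\mapsto t\phi$. The space $\IP^p([0,1])$ is finite dimensional, so we may restrict to a compact set. Let $Q:=\{\phi\in\IP^p([0,1]) : \phi''\neq 0\}$, and consider
\[
 \rho(\phi):=\frac{|(\phi'',\phi)_{[0,1]}|}{\norm{\phi''}_{[0,1]}\norm{\phi}_{[0,1]}} .
\]
This quotient is invariant under scaling of $\phi$. It is also invariant under adding elements of $\IP^1$ only in the sense that $\phi''$ is unchanged; since $\phi$ itself changes, we do not quotient by $\IP^1$.

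Instead, we parametrize differently. Normalize $\norm{\phi}_{[0,1]}=1$ and require $\norm{\phi''}_{[0,1]}\ge\delta$ for a small $\delta>0$. This set is compact, and $\rho$ is continuous on it. It therefore suffices to handle two things: a compactness argument on this set, and the regime $\norm{\phi''}\to 0$.

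\textbf{Handling small $\phi''$.} When $\norm{\phi''}\to0$ with $\norm{\phi}=1$, finite dimensionality gives a subsequence with $\phi\to\phi_0$ and $\phi''\to\phi_0''=0$. Then $\phi_0\in\IP^1$. Write $\phi''=\eta\psi$ with $\norm{\psi}=1$ and $\psi\in\IP^{p-2}$. Then $\rho(\phi)\to|(\psi_\infty,\phi_0)|$ along a further subsequence, where $\psi_\infty\in\IP^{p-2}$ and $\phi_0\in\IP^1$ are unit vectors, and $\psi_\infty$ is a limit of second derivatives.

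This can approach $1$ only if $\psi_\infty=\pm\phi_0$, i.e. $\phi_0\in\IP^1\cap\IP^{p-2}$. That is possible for $p\ge3$, so this route is delicate. A cleaner way is to avoid sequences altogether and work with the full unit sphere of pairs. Define $S:=\{\phi:\norm{\phi}=1\}$ and the map $\phi\mapsto\phi''$. The quotient $\rho$ is then not defined on $\ker$, so the compactness has to be arranged more carefully.

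\textbf{Key structural step: strict inequality.} I would first show that $\rho(\phi)<1$ for every nonzero $\phi$ with $\phi''\ne0$. Equality in Cauchy--Schwarz forces $\phi''=\lambda\phi$ for some $\lambda\neq0$. Comparing the degrees of the two sides, $\phi''$ has degree $\deg\phi-2$ while $\lambda\phi$ has degree $\deg\phi$, which is impossible. Hence $\phi$ is not an eigenfunction and $\rho<1$ pointwise.

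\textbf{Uniformity (the main obstacle).} To make the gap uniform, I would use the finite-dimensional operator $T:\phi\mapsto\phi''$ on $\IP^p$, which is nilpotent with $T^{p}=0$ after enough applications. The quantity $\rho$ is the $\lvert\cos\rvert$ of the angle between $T\phi$ and $\phi$. Suppose there were $\phi_n$ with $\rho(\phi_n)\to1$. Normalize $\norm{T\phi_n}=1$, so that $\norm{\phi_n}$ is bounded below by $\norm{T}^{-1}$.

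If $\norm{\phi_n}$ stays bounded, a limit $\phi$ with $\norm{T\phi}=1$ attains $\rho=1$, which contradicts the strict-inequality step.

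If instead $\norm{\phi_n}\to\infty$, set $\chi_n=\phi_n/\norm{\phi_n}$. Then $T\chi_n\to0$, so the limit $\chi$ lies in $\IP^1$. At the same time, $T\phi_n$ is nearly parallel to $\phi_n$, so $\mathrm{dist}(\chi_n,T(\IP^p))\to0$, giving $\chi\in\IP^{p-2}\cap\IP^1$.

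To exclude this case, I would iterate the argument. Writing $\phi_n=\norm{\phi_n}\chi_n$, the near-parallelism $T\phi_n\approx\pm\norm{T\phi_n}\chi_n$ gives $T^2\phi_n\approx\pm T\chi_n\to0$. Repeating this along the Jordan chain of the nilpotent $T$ should force $\chi=0$, contradicting $\norm{\chi}=1$. I expect this iteration over the Jordan chain to be the delicate step.

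As a fallback, one can use the structure of the problem directly: minimize $1-\rho$ over the compact projective space of pairs, or integrate by parts via $(\phi'',\phi)=-\norm{\phi'}^2+[\phi'\phi]_0^1$ together with inverse and trace estimates on $\IP^p$.
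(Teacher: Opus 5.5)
\textbf{Gap: the uniformity step cannot be closed.} Your pointwise step is correct, and comparing degrees is a cleaner way than the paper's ODE remark to exclude $\phi''=\lambda\phi$. The uniformity step, however, cannot be completed. The regime you isolate, $\norm{T\phi_n}=1$ with $\norm{\phi_n}\to\infty$, actually occurs with $\rho(\phi_n)\to1$, so no Jordan-chain iteration can force $\chi=0$.

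For $p\ge2$ take $\phi_t(x)=t+\tfrac12x^2\in\IP^p([0,1])$. Then $\phi_t''=1$, $(\phi_t'',\phi_t)_{[0,1]}=t+\tfrac16$ and $\norm{\phi_t}_{[0,1]}^2=t^2+\tfrac t3+\tfrac1{20}$, hence
\[
1-\frac{(\phi_t'',\phi_t)_{[0,1]}^2}{\norm{\phi_t''}_{[0,1]}^2\,\norm{\phi_t}_{[0,1]}^2}
=\frac{1/45}{t^2+t/3+1/20}\longrightarrow 0 \qquad (t\to\infty).
\]
Here $\phi_t/\norm{\phi_t}_{[0,1]}\to1\in\IP^0\subset\IP^1\cap\IP^{p-2}$ and $T^2\phi_t=0$, so your iteration produces no contradiction. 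Note also that $\IP^1\cap\IP^{p-2}\neq\{0\}$ already for $p=2$, not only for $p\ge3$.

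Thus \eqref{eq:noeigenpoly} with a fixed $C_p>0$ is false for every $p\ge2$; it is trivial for $p\le1$, where both sides vanish. The ``delicate'' small-$\phi''$ limit you found in your second paragraph is in fact a disproof, and no route can close the gap.

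\textbf{The paper's proof fails at the same point.} It maximizes over $S=\{\phi\in\IP^p([0,1]) : \norm{\phi''}_{[0,1]}=1\}$ and calls $S$ compact. But $S$ is invariant under $\phi\mapsto\phi+a+bx$, so it is unbounded. The family $\phi_t$ lies in $S$, so the supremum of the quotient over $S$ equals $1$ and is not attained.

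\textbf{What the downstream argument actually needs.} The proof of Lemma~\ref{lem:localcoercivity} only needs a lower bound for
\[
\lambda_i\norm{u_i}_{\Kt}^2-2(u_i,\eps^2\partial_2^2u_i)_{\Kt}+\lambda_i^{-1}\norm{\eps^2\partial_2^2u_i}_{\Kt}^2=\lambda_i\norm{(I-\mu_i\partial_y^2)u_i}_{\Kt}^2,
\]
where $y$ is the coordinate on $\Kt_2$ rescaled to $[0,1]$ and $\mu_i=\eps^2/(\lambda_i\hKtt^2)$. Since $\lambda_i\ge\pi^2\eps^2\hKot^{-2}$ (Poincar\'e) and $\hKot\le\hKtt$, one has $\mu_i\le\pi^{-2}$. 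Because $\partial_y^2$ is nilpotent on polynomials, $(I-\mu\partial_y^2)^{-1}=\sum_{k\le p/2}\mu^k\partial_y^{2k}$ is bounded uniformly for $\mu\in[0,\pi^{-2}]$. This yields the needed bound $\gtrsim\lambda_i\norm{u_i}_{\Kt}^2+\lambda_i^{-1}\norm{\eps^2\partial_2^2u_i}_{\Kt}^2$, with a constant depending only on $p$. So the lemma should be replaced by an estimate of this kind rather than proved.
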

		\begin{proof}
		By the Cauchy--Schwarz inequality,
		\begin{equation}
        \label{eq:equality-inequality}
		|( \phi'', \phi)_{[0,1]} | \;\le\; \norm{\phi''}_{[0,1]} \,\norm{\phi}_{[0,1]} \qquad \forall \phi \in \IP^p([0,1]).
		\end{equation}
		Equality holds if and only if~$\phi$ satisfies $\phi''=\lambda \phi$ for some $\lambda\in\IR$, i.e.,\ if $\phi$ is an eigenfunction of the differential operator $u\mapsto u''$. However, the only 
        eigenfunctions of such an operator are trigonometric or exponential functions (or affine functions for $\lambda=0$). 
		Apart from the trivial affine case, none of these are polynomials. 
        Hence, on the compact set
		\begin{equation*}
		S := \{\phi \in \IP^p([0,1]) \mid \norm{\phi''}_{[0,1]} = 1\} \subset \IP^p([0,1]),
		\end{equation*}
		no $\phi\in S$ achieves equality in~\eqref{eq:equality-inequality}.  
		Now, define the following maximum over $S$:
		\begin{equation*}
		s_p := \max_{\phi \in S}\frac{ |(  \phi'', \phi)_{[0,1]} |}{  \norm{\phi}_{[0,1]}} < 1.
		\end{equation*}
		Setting $C_p := 1-s_p$ yields~\eqref{eq:noeigenpoly}.
		\end{proof}

\subsubsection{Local stability on the pullback element}\label{sec:localstability_pullback}
Given $K\in\calK$, we introduce a prototype operator $\AKO$ on the pullback element $\Kt=G_K^{-1}(K)$, defined as
	\begin{equation}\label{eq:AK0}
	    \AKO  \tilde v := -\eps^2 
        \widetilde{\Delta}\tilde  v +   \tilde v \qquad 
        \forall \tilde v \in 
	    \IP^p_\otimes(\tilde K).
	\end{equation}
Recall that the norm on $\mathbb{L}_h(\Kt)$ is given by \eqref{eq:norm_Lh}, namely
$
\|\tilde v\|^2_{\mathbb{L}_h(\Kt)}
:= \norm{\tilde v}^2_{\Kt}
+\eps^2\norm{\partial_1\tilde v}^2_{\Kt}
$.
In the following lemma, we prove that $A_{\Kt,0}$ satisfies the stability estimate \eqref{eq:A_coerc} on $\IL_h(\Kt)$.
In fact, we show a stronger result, where the norm on the right-hand side exhibits the scaling of the operator with respect to the anisotropic mesh sizes.
	\begin{lemma}[Stability estimate for~$A_{\Kt, 0}$]\label{lem:localcoercivity}
		There exists a constant $c_{A,0}>0$ such that
	\begin{equation*}
		\|\AKO  \tilde v \|_{\IL_h(\Kt)'} 
	    \geq c_{A,0}
	    \Big( \norm{\tilde v}_{\Kt}^2 + \eps^2 \norm{\partial_1 \tilde v}_{\Kt}^2 + \Big(\const{ainv}\frac{\hKot}{\hKtt}\Big)^{-2} \eps^2 \norm{\partial_2 \tilde v}_{\Kt}^2 \Big)^{\frac{1}{2}}
	    \quad \forall \tilde v\in \IL_h(\Kt),
	\end{equation*}
    and, in particular,
    it holds 
	\begin{equation*}
		\|\AKO  \tilde v \|_{\IL_h(\Kt)'} 
        \geq c_{A,0} \norm{\tilde v}_{\mathbb{L}_h(\Kt)}
	    \quad \forall \tilde v\in \IL_h(\Kt).
	\end{equation*}
	\end{lemma}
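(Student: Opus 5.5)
We first rescale $\Kt$ to the unit square $\Kh$ through $\FK$. Writing $\hat v=\tilde v\circ\FK$, the operator becomes
\[
\hat A\hat v=-t\,\hat\partial_1^2\hat v-t r^2\,\hat\partial_2^2\hat v+\hat v,\qquad t:=\eps^2/\hKot^2,\quad r:=\hKot/\hKtt\in(0,1].
\]
The space $\IL_h(\Kt)$ is mapped onto the fixed space $\IL:=\{\hat v\in\IP^p_\otimes(\Kh)\mid \hat v=0 \text{ on } \{0,1\}\times(0,1)\}$. Both the dual norm and the target norm scale by the same factor $\det D_K$. The claim is therefore equivalent to the following: for all $\hat v\in\IL$ and all $(t,r)\in[0,\infty)\times(0,1]$,
\[
\sup_{\hat w\in\IL}\frac{(\hat A\hat v,\hat w)}{\big(\|\hat w\|^2+t\|\hat\partial_1\hat w\|^2\big)^{1/2}}\gtrsim\Big(\|\hat v\|^2+t\|\hat\partial_1\hat v\|^2+\const{ainv}^{-2}\,t\,\|\hat\partial_2\hat v\|^2\Big)^{1/2}.
\]
The $\hat\partial_2$ term on the right is dominated by $t\|\hat\partial_1\hat v\|^2$ via \cref{lem:anisotropic-inverse}, which on $\Kh$ reads $\|\hat\partial_2\hat v\|\le\const{ainv}\|\hat\partial_1\hat v\|$. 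So it suffices to prove the second, weaker bound with the $\IL_h$-norm. This reduces the lemma to a uniform inf-sup constant for a two-parameter family of operators on a fixed finite-dimensional space.

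The natural first step is to test with $\hat w=\hat v$. Because $\hat v$ vanishes on the edges $\{0,1\}\times(0,1)$, integration by parts in $\hat x_1$ has no boundary terms, and we get
\[
(\hat A\hat v,\hat v)=\|\hat v\|^2+t\|\hat\partial_1\hat v\|^2-tr^2(\hat\partial_2^2\hat v,\hat v).
\]
We control the last term line by line in $\hat x_2$ using \cref{lem:noeigenpoly}:
\[
|(\hat\partial_2^2\hat v,\hat v)|\le(1-C_p)\|\hat\partial_2^2\hat v\|\,\|\hat v\|.
\]
A one-dimensional inverse estimate then bounds $\|\hat\partial_2^2\hat v\|$, and the Poincaré--Steklov inequality \eqref{eq:SteklovPoincare} gives $\|\hat v\|\lesssim\|\hat\partial_1\hat v\|$. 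When $r$ is small or $t$ is small, the negative term is absorbed and coercivity follows directly.

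The main obstacle is the regime $r\approx1$ with $t$ large. There the constants coming from \cref{lem:noeigenpoly} and the inverse estimate need not be small enough for absorption. In this regime I would not test with $\hat v$ alone but use a compactness argument on the parameters instead. The inf-sup constant $\beta(t,r)$ is a continuous positive function provided $\hat A$ restricted to $\IL$ is injective. This holds for $r>0$: if $(\hat A\hat v,\hat w)=0$ for all $\hat w\in\IL$, we test with $\hat w=\hat v$, expand $\hat v$ in the $\hat x_2$-direction, and invoke \cref{lem:noeigenpoly}. The point is that equality in Cauchy--Schwarz is impossible for nontrivial polynomials, so $(\hat\partial_2^2\hat v,\hat v)$ cannot compensate exactly. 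Uniformity also requires the limits $t\to\infty$ and $r\to0$. After dividing by $t$, these limits give the operators $-\hat\partial_1^2-r^2\hat\partial_2^2$ and $-t\hat\partial_1^2+I$ respectively. Both are injective on $\IL$: the first by the same \cref{lem:noeigenpoly} argument, the second trivially. Hence $\beta$ extends continuously and positively to the compactified parameter set $[0,\infty]\times[0,1]$, and $c_{A,0}:=\min\beta>0$ depends only on $p$.

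The step I expect to be most delicate is making this continuity at $t=\infty$ rigorous, because the $\IL_h$-norm itself degenerates to the norm $\sqrt t\,\|\hat\partial_1\cdot\|$ in that limit. I would handle it by working with the normalized operator $\hat A/(1+t)$ and the equivalent norm $\big((1+t)^{-1}\|\cdot\|^2+t(1+t)^{-1}\|\hat\partial_1\cdot\|^2\big)^{1/2}$, whose coefficients are continuous on the compactified parameter set.
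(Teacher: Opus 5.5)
Your reduction to $\Kh$, the observation that the $\partial_2$-term follows from the $\IL_h$-bound via \cref{lem:anisotropic-inverse}, and the normalization by $1+t$ are all sound. With $s=t/(1+t)$, the normalization makes the operator and Gram matrices continuous on the compact set $(s,r)\in[0,1]^2$, and $\norm{\hat\partial_1\cdot}$ is still a norm on $\IL$ at $s=1$. Such a compactness argument would be a legitimate alternative to the paper's explicit test functions.

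The gap is in the one non-soft ingredient, pointwise injectivity of $\hat A$ on $\IL$. Testing with $\hat w=\hat v$ cannot give it, because $\hat v\mapsto(\hat A\hat v,\hat v)$ is indefinite on $\IL$ precisely in the regime you are worried about. For $p=2$ one has $\IL=\{\psi\phi:\phi\in\IP^2\}$ with $\psi=\hat x_1(1-\hat x_1)$, and
\[
(\hat A(\psi\phi),\psi\phi)=\tfrac1{30}\bigl[(10t+1)\norm{\phi}^2-tr^2(\phi'',\phi)\bigr].
\]
For $r=1$ this is negative for $\phi=(\hat x_2-\tfrac12)^2$ as soon as $t>3/10$, and positive for $\phi=1$. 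For your limit operator $-\hat\partial_1^2-\hat\partial_2^2$ it vanishes at $\phi=\hat x_2^2-\hat x_2+\tfrac13$. So $(\hat A\hat v,\hat v)=0$ has nonzero solutions and yields no contradiction.

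\cref{lem:noeigenpoly} cannot repair this. It bounds $(\hat\partial_2^2\hat v,\hat v)$ by a multiple of $\norm{\hat\partial_2^2\hat v}\norm{\hat v}$, which is not small relative to $\norm{\hat v}^2+t\norm{\hat\partial_1\hat v}^2$. Moreover, as stated it fails for every $p\ge2$: for $\phi=1+\delta x^2$ the ratio $(\phi'',\phi)/(\norm{\phi''}\norm{\phi})$ tends to $1$ as $\delta\to0$. The set $S$ in its proof is not compact, because adding affine functions does not change $\phi''$. Near-equality in Cauchy--Schwarz is therefore possible for polynomials.

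The missing idea is to use non-diagonal test functions and the tensor structure $\IL=(\IP^p\cap H^1_0)\otimes\IP^p$. Expand $\hat v=\sum_i\psi_i(\hat x_1)u_i(\hat x_2)$ in the discrete Dirichlet eigenbasis of $\IP^p(0,1)\cap H^1_0(0,1)$ (mass-orthonormal, with stiffness eigenvalues $\hat\lambda_i\ge\pi^2$). Then test with $\psi_j(\hat x_1)w(\hat x_2)$ for arbitrary $w\in\IP^p$. In your normalized variables, $(\hat A\hat v,\hat w)=0$ for all $\hat w\in\IL$ becomes $(1-s+s\hat\lambda_j)u_j-sr^2u_j''=0$ in $\IP^p(0,1)$. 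Since the coefficient of $u_j$ is positive and differentiation lowers the degree, $u_j=0$. This holds on all of $[0,1]^2$, including $s=1$ and $r=0$. Your compactness argument then closes, with a non-explicit constant depending only on $p$. Compactness in $r\in[0,1]$ is exactly where $\hKot\le\hKtt$ enters; for $r\to\infty$ the constant does degenerate.

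The paper uses the same decoupling and then the explicit test function $v_i=u_i-\frac{\eps^2}{\lambda_i}\partial_2^2u_i$. Note that $(A_{\Kt,0}u_i,v_i)=\lambda_i\norm{v_i}_{\Kt}^2$. So that route really needs $\norm{u-\mu u''}_{\Kt_2}\gtrsim\norm{u}_{\Kt_2}$ on $\IP^p(\Kt_2)$ for $\mu=\eps^2/\lambda_i\le\hKtt^2/\pi^2$. This follows from the same kind of finite-dimensional compactness together with $\hKot\le\hKtt$, not from \cref{lem:noeigenpoly}.
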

	\begin{proof}
	For \(p=1\), the space \(\IL_h(\Kt)\) is trivial, so there is nothing to prove. Hence assume \(p\ge2\).
	
	Let \(\{\psi_i\}_{i=1}^{p-1}\subset \IP^p(\Kt_1)\cap H_0^1(\Kt_1)\) be a basis that is orthonormal in $L^2(\Kt_1)$ and orthogonal with respect to the inner product
$
	(\cdot,\cdot)_{\Kt_1}
	+\eps^2(\partial_1\cdot,\partial_1\cdot)_{\Kt_1}
$,  i.e., such that
	\begin{equation}\label{eq:d1ortho}
	(\psi_i,\psi_j)_{\Kt_1}
	+\eps^2(\partial_1\psi_i,\partial_1\psi_j)_{\Kt_1}
	=\lambda_i\delta_{ij},
	\qquad i,j=1,\dots,p-1.
	\end{equation}
	In particular, for \(i\neq j\),
	$
	(\psi_i,\psi_j)_{\Kt_1}=0$, and
	$(\partial_1\psi_i,\partial_1\psi_j)_{\Kt_1}=0$, and there exist $\widehat\lambda_i>0$, depending only on $p$, such that
	\begin{equation*}
    \lambda_i=1+\eps^2\hKot^{-2}\widehat\lambda_i.
	\end{equation*}
	
	Let \(\tilde v\in \IL_h(\Kt)\) and write
	\begin{equation*}
	\tilde v(\widetilde x_1,\widetilde x_2)=\sum_{i=1}^{p-1}\psi_i(\widetilde x_1)\tilde v_i(\widetilde x_2),
	\quad \text{for }\tilde v_i\in \IP^p(\Kt_2),  i = 1, \ldots, p-1.
	\end{equation*}
	
	The functions $\{\psi_j  \tilde v_j\}_{j=1}^{p-1}$ 
    are orthogonal on $\Kt$ with respect to several operators. More precisely, for all $i \neq j$, we have
	\begin{equation}\label{eq:psiortho}
	\begin{cases}
	( \psi_i  \tilde v_i, \psi_j  \tilde v_j )_{\Kt} = 0,\\
	(\psi_i  \partial_2^2 \tilde v_i, \psi_j  \tilde v_j )_{\Kt} = 0,\\
	( \partial_1^2 \psi_i  \tilde v_i, \psi_j  \tilde v_j )_{\Kt} = 0.
	\end{cases}
	\end{equation}
	The first two equalities follow from the orthogonality of $\{\psi_i\}_{i=1}^{p-1}$ in $L^2(\Kt_1)$ and the tensor-product structure of~$\Kt$. For the last one, using integration by parts in the $\widetilde x_1$-variable and the fact that $\psi_j=0$ on $\partial\Kt_1$, we obtain
	\begin{equation*}
	(\partial_1^2\psi_i\,\tilde v_i,\psi_j\,\tilde v_j)_{\Kt}
	=
	-(\partial_1\psi_i,\partial_1\psi_j)_{\Kt_1}
	(\tilde v_i,\tilde v_j)_{\Kt_2}
	=0,
	\end{equation*}
	for \(i\neq j\), where we used \eqref{eq:d1ortho}.
	Using \eqref{eq:psiortho} and \eqref{eq:d1ortho}, we obtain the following expression for the norm of $\tilde v$:
	\begin{equation}\label{eq:norm-psiortho}
		\begin{split}
			\|\tilde v\|_{\mathbb L_h(\Kt)}^2
			=
			\|\tilde v\|_{\Kt}^2
			+\eps^2\|\partial_1\tilde v\|_{\Kt}^2
			&=
			\sum_{i=1}^{p-1}
			\left(
			\|\tilde v_i\|_{\Kt_2}^2
			+\eps^2\|\partial_1\psi_i\|_{\Kt_1}^2
			\|\tilde v_i\|_{\Kt_2}^2
			\right)
			\\
			&=
			\sum_{i=1}^{p-1}\lambda_i\|\tilde v_i\|_{\Kt_2}^2
			=
			\sum_{i=1}^{p-1}\lambda_i\|\psi_i\tilde v_i\|_{\Kt}^2 .
		\end{split}
	\end{equation}
	Now let \(\tilde u\in \IL_h(\Kt)\) and write 
	\begin{equation*}
	\tilde u(\widetilde x_1,\widetilde x_2)
	=
	\sum_{i=1}^{p-1}\psi_i(\widetilde x_1)\tilde u_i(\widetilde x_2),
	\quad  \text{for } \tilde{u}_i\in \IP^p(\Kt_2), \ i = 1, \ldots, p-1.
	\end{equation*}
	For any $i\in\{1,\dots,p-1\}$, set
	\begin{equation*}
	u_i:=\psi_i\tilde u_i,
	\qquad
	v_i:=u_i-\frac{\eps^2}{\lambda_i}\partial_2^2u_i
	=
	\psi_i\Big(\tilde u_i-\frac{\eps^2}{\lambda_i}\partial_2^2\tilde u_i\Big)\in \IL_h(\Kt).
	\end{equation*}
    
	By \eqref{eq:d1ortho}, these choices satisfy
	$ (u_i-\eps^2\partial_1^2u_i,v_i)_{\Kt} = \lambda_i(u_i,v_i)_{\Kt}$.
	Therefore,
	\begin{align*}
	(\AKO u_i,v_i)_{\Kt}
	&=
	(u_i-\eps^2\partial_1^2u_i-\eps^2\partial_2^2u_i,v_i)_{\Kt}
	\\
	&=
	\lambda_i(u_i,v_i)_{\Kt}
	-(\eps^2\partial_2^2u_i,v_i)_{\Kt}
	\\
	&=
	\lambda_i\|u_i\|_{\Kt}^2
	-2(u_i,\eps^2\partial_2^2u_i)_{\Kt}
	+\frac{1}{\lambda_i}\|\eps^2\partial_2^2u_i\|_{\Kt}^2,
	\end{align*}
	where we used integration by parts, the fact that $\psi_i$ vanishes on the boundary $\partial \Kt_1$, and the 
    orthogonality relations~\eqref{eq:psiortho}.
	
	Using Lemma~\ref{lem:noeigenpoly} in the \(\widetilde x_2\)-variable and the weighted Young inequality, we get
	\begin{align*}
	2\big|(u_i,\eps^2\partial_2^2u_i)_{\Kt}\big|
	&\le
	2(1-C_p)\|u_i\|_{\Kt}
	\|\eps^2\partial_2^2u_i\|_{\Kt}
	\\
	&\le
	(1-C_p)
	\Big(
	\lambda_i\|u_i\|_{\Kt}^2
	+\frac1{\lambda_i}\|\eps^2\partial_2^2u_i\|_{\Kt}^2
	\Big),
	\end{align*}
	and we infer
	\begin{equation}\label{eq:mode-coercive}
	(\AKO u_i,v_i)_{\Kt}
	\ge
	C_p
	\Big(
	\lambda_i\|u_i\|_{\Kt}^2
	+\frac1{\lambda_i}\|\eps^2\partial_2^2u_i\|_{\Kt}^2
	\Big).
	\end{equation}
	Moreover, since \eqref{eq:d1ortho} also gives $ \|v_i\|_{\mathbb L_h(\Kt)}^2 = \lambda_i\|v_i\|_{\Kt}^2$,
	 then by \eqref{eq:mode-coercive} and the triangle inequality, 
	\begin{align*}
	\frac{(\AKO u_i,v_i)_{\Kt}}{\|v_i\|_{\mathbb L_h(\Kt)}}
	&\ge
	C_p\,
	\frac{
	\lambda_i\|u_i\|_{\Kt}^2
	+\lambda_i^{-1}\|\eps^2\partial_2^2u_i\|_{\Kt}^2
	}{
	\lambda_i^{1/2}\|v_i\|_{\Kt}
	}
	\\
	&\ge
	C_p\,
	\frac{
	\lambda_i\|u_i\|_{\Kt}^2
	+\lambda_i^{-1}\|\eps^2\partial_2^2u_i\|_{\Kt}^2
	}{
	\lambda_i^{1/2}\|u_i\|_{\Kt}
	+\lambda_i^{-1/2}\|\eps^2\partial_2^2u_i\|_{\Kt}
	}
	\ge
	\frac{C_p}{2}\,\lambda_i^{1/2}\|u_i\|_{\Kt}.
	\end{align*}
	For every $i=1,\dots,p-1$, using again the orthogonality relations \eqref{eq:psiortho}, we have
	$
	(\AKO \tilde u,v_i)_{\Kt}
	=
	(\AKO u_i,v_i)_{\Kt}
	$
so that
	\begin{equation*}
	\|\AKO\tilde u\|_{\mathbb L_h(\Kt)'}
	=
	\sup_{q\in \mathbb L_h(\Kt)}
	\frac{(\AKO\tilde u,q)_{\Kt}}{\|q\|_{\mathbb L_h(\Kt)}}
	\ge
	\frac{(\AKO\tilde u,v_i)_{\Kt}}{\|v_i\|_{\mathbb L_h(\Kt)}}
	\ge
	\frac{C_p}{2}\lambda_i^{1/2}\|u_i\|_{\Kt} .
	\end{equation*}
	Since this holds for every \(i\), taking the maximum over \(i\) and using
	\(
	\max_{1\le i\le p-1} a_i \ge \frac1{p-1}\sum_{i=1}^{p-1} a_i
	\)
	for nonnegative numbers \(a_i\), we get 
	\begin{align*}
	\|\AKO\tilde u\|_{\mathbb L_h(\Kt)'}^2
	&\ge
	\frac{C_p^2}{4}
	\max_{1\le i\le p-1}\lambda_i\|u_i\|_{\Kt}^2
	\ge
	\frac{C_p^2}{4(p-1)} \sum_{i=1}^{p-1}\lambda_i\|u_i\|_{\Kt}^2.
	\end{align*}
	Using the anisotropic inverse estimate \eqref{ainv} and 
    identity~\eqref{eq:norm-psiortho}, we finally obtain
	\begin{align*}
	\|\AKO\tilde u\|_{\mathbb L_h(\Kt)'}^2
	&\gtrsim
	\|\tilde u\|_{\Kt}^2
	+\eps^2\|\partial_1\tilde u\|_{\Kt}^2
	+\Big(\const{ainv}\frac{\hKot}{\hKtt}\Big)^{-2}\eps^2\|\partial_2\tilde u\|_{\Kt}^2
	.
	\end{align*}
	This proves the claim.
	\end{proof}

	\subsubsection{Local stability on the physical element} \label{sec:localstability_physical}
	In this section, we transfer 
    the stability of~$A_{\Kt,0}$ to a pulled-back operator via the perturbation argument of Lemma \ref{lem:abstractneumann}, and then to the physical operator $A_K$.
	
	Given the $C^1$ diffeomorphism $G_K: \widetilde{K} \to K$ with Jacobian~$J_{\GK} : \Kt \to \R^{2\times 2}$ satisfying~\eqref{eq:GK}, we set
	\begin{equation*}
	\rho_{G_K}(\tilde \bx):= \det(J_{\GK}(\tilde \bx)), \qquad \nu_{G_K}(\tilde \bx):=
    \rho_{\GK}(\bx) J_{\GK}(\tilde \bx)^{-1}J_{\GK}(\tilde \bx)^{-\top},
	 \qquad \text{for } \tilde \bx \in \Kt.
	\end{equation*}
	For $v\in \LKh$, we write $\tilde v:=v\circ G_K\in \mathbb L_h(\Kt)$ and define the pulled-back operator
	\begin{equation}\label{eq:AKtilde}
	A_{\Kt} \tilde v:= -\eps^2 \widetilde\Div(\nu_{G_K} \widetilde\nabla \tilde v) + \rho _{G_K}\tilde v.  
	 \end{equation}
	 The operator $A_{\Kt}$ is the pullback of $A_K$
	 to $\Kt$, in the sense that
	 \begin{equation*}
	 	(A_K v,\,q)_{K}
	 	=(A_{\Kt}\tilde v,\,\tilde q)_{\Kt}
	 	\qquad\forall v, q\in\LKh,  \quad \tilde v=v\circ G_K, \quad  \tilde q=q\circ G_K.
	 \end{equation*}
	 Moreover, $A_{\Kt}$ reduces to the prototype $A_{\Kt,0}$ 
	 when $G_K$ is a rigid motion, i.e.,\ when $\nu_{G_K}=I$ and $\rho_{G_K}=1$.
	Therefore, $A_{\Kt}$ can be viewed as a 
	perturbation of $A_{\Kt,0}$, with the perturbation controlled 
	by the deviation of $G_K$ from a rigid motion.
	To quantify this deviation, we write
	\begin{equation*}
		\nu_{G_K} - I =
		\begin{pmatrix}
			\eta_{11} & \eta_{12}\\
			\eta_{21} & \eta_{22}
		\end{pmatrix},
	\end{equation*}
	and define the  quantity
	\begin{equation}\label{eq:thetaK-aniso}
		\theta_K :=
		\|\rho_{G_K}-1\|_{L^\infty(\Kt)}
		+\|\eta_{11}\|_{L^\infty(\Kt)}
		+\frac{\hKot}{\hKtt}\bigl(\|\eta_{12}\|_{L^\infty(\Kt)}
		+\|\eta_{21}\|_{L^\infty(\Kt)}\bigr)
		+\left(\frac{\hKot}{\hKtt}\right)^2\|\eta_{22}\|_{L^\infty(\Kt)}.
	\end{equation}
    Note that any relevant scaling of the element is done in the mapping $F_K$; therefore, in the definition of $\theta_K$, we compare $\nu_{G_K}$ to the identity matrix $I$ and $\rho_{G_K}$ to $1$. 

	\begin{lemma}\label{lem:perturb-bent-aniso}
	Let $K\in\calK$ and let $\Kt=\Kt_1\times \Kt_2=G_K^{-1}(K)$.
	Let also $\AKO$ and $\widetilde A_K$ be the prototype and pulled-back operators from~\eqref{eq:AK0} and~\eqref{eq:AKtilde}, respectively. 
	Assume that  $\theta_K$ defined in~\eqref{eq:thetaK-aniso} satisfies
	$ \theta_K<\frac{c_{A,0}}{C_{\rm pert}}$, 	
		with  $c_{A,0}$ as in Lemma~\ref{lem:localcoercivity} and $C_{\rm pert}>0$ 
        as in~\eqref{eq:perturb-aniso-bound} below.
	Then
	\begin{equation*}
	\|\widetilde A_K \tilde v\|_{\mathbb L_h(\Kt)'}
	\ge
	c_{A,0}(1-\gamma_K)\|\tilde v\|_{\mathbb L_h(\Kt)}
	\qquad\forall \tilde v\in\mathbb L_h(\Kt),
	\end{equation*}
	with
	$ \gamma_K:=\frac{C_{\rm pert}\,\theta_K}{c_{A,0}}<1.$
	\end{lemma}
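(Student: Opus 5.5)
The plan is a perturbation argument around Lemma~\ref{lem:localcoercivity}, using the stronger form of that lemma, which controls $\partial_2\tilde v$ with the weight $(\hKtt/\hKot)^2$. I would write
\[
\widetilde A_K\tilde v=\AKO\tilde v+B_K\tilde v,\qquad
B_K\tilde v:=-\eps^2\widetilde\Div\bigl((\nu_{G_K}-I)\widetilde\nabla\tilde v\bigr)+(\rho_{G_K}-1)\tilde v .
\]
The aim is the perturbation bound
\begin{equation}\label{eq:perturb-aniso-bound}
\|B_K\tilde v\|_{\mathbb L_h(\Kt)'}\le C_{\rm pert}\,\theta_K\,\|\tilde v\|_{\mathbb L_h(\Kt)}\qquad\forall\tilde v\in\mathbb L_h(\Kt).
\end{equation}
Given this, the reverse triangle inequality together with Lemma~\ref{lem:localcoercivity} yields
\[
\|\widetilde A_K\tilde v\|_{\mathbb L_h(\Kt)'}\ge (c_{A,0}-C_{\rm pert}\theta_K)\|\tilde v\|_{\mathbb L_h(\Kt)}=c_{A,0}(1-\gamma_K)\|\tilde v\|_{\mathbb L_h(\Kt)},
\]
which is the claim. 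This is presumably the content of the abstract Neumann-type Lemma~\ref{lem:abstractneumann}.

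To prove \eqref{eq:perturb-aniso-bound}, I test with $\tilde q\in\mathbb L_h(\Kt)$ and integrate the divergence term by parts. The boundary term $\int_{\partial\Kt}\eps^2(\nu_{G_K}-I)\widetilde\nabla\tilde v\cdot\bn\,\tilde q$ is a concern, since $\tilde q$ vanishes only on $\partial\Kt_1\times\Kt_2$. To avoid it, I would interpret $(A_K v,q)_K$ in the weak sense, i.e.\ as the elementwise bilinear form. This is consistent with how $A_{\Kt}$ is meant to act on the polynomial test space, and if needed the definition of $A_K$ on $\Vsh|_K$ is read as $\eps^2(\nabla v,\nabla q)_K+(v,q)_K$ minus the normal-flux boundary term, which is harmless here. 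The perturbation then becomes
\[
\langle B_K\tilde v,\tilde q\rangle=\sum_{i,j}\eps^2(\eta_{ij}\,\partial_j\tilde v,\partial_i\tilde q)_{\Kt}+((\rho_{G_K}-1)\tilde v,\tilde q)_{\Kt}.
\]
Each term is bounded by an $L^\infty$ norm of a coefficient times Cauchy--Schwarz:
\begin{itemize}
\item the reaction term by $\|\rho_{G_K}-1\|_{L^\infty}\|\tilde v\|_{\Kt}\|\tilde q\|_{\Kt}$;
\item the $\eta_{11}$ term by $\|\eta_{11}\|_{L^\infty}\,\eps\|\partial_1\tilde v\|\,\eps\|\partial_1\tilde q\|$;
\item the $\eta_{12}$ and $\eta_{21}$ terms by $\|\eta_{12}\|_{L^\infty}\,\eps\|\partial_2\tilde v\|\,\eps\|\partial_1\tilde q\|$ and the symmetric analogue;
\item the $\eta_{22}$ term by $\|\eta_{22}\|_{L^\infty}\,\eps\|\partial_2\tilde v\|\,\eps\|\partial_2\tilde q\|$.
\end{itemize}
Every $\partial_2$ acting on either $\tilde v$ or $\tilde q$ is then converted by the anisotropic inverse inequality \eqref{ainv}: $\eps\|\partial_2\tilde w\|_{\Kt}\le\const{ainv}\frac{\hKot}{\hKtt}\eps\|\partial_1\tilde w\|_{\Kt}$ for $\tilde w\in\mathbb L_h(\Kt)$. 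The mixed terms thus pick up one factor $\hKot/\hKtt$ and the $\eta_{22}$ term picks up two, which is exactly the weighting in $\theta_K$ from \eqref{eq:thetaK-aniso}. Summing gives \eqref{eq:perturb-aniso-bound} with $C_{\rm pert}$ depending only on $\const{ainv}$, and the stronger bound from Lemma~\ref{lem:localcoercivity} is not even needed.

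The main obstacle is the integration-by-parts and boundary-term issue. The operator $A_{\Kt}$ is written in strong form, and its pairing must agree with the weak form on the edges $\Kt_1\times\partial\Kt_2$, where $\tilde q$ need not vanish. I would first check that the duality pairing with $\mathbb L_h(\Kt)'$ is meant as the $L^2(\Kt)$ pairing $(\widetilde A_K\tilde v,\tilde q)_{\Kt}$ of the polynomial-coefficient expression. If it is, I would instead bound the strong-form terms directly, which requires second derivatives, and handle them via \eqref{ainv} and one-dimensional inverse estimates.

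For the $\partial_1$-direction pieces, the boundary contribution vanishes after integrating by parts in $\tilde x_1$ only, because $\tilde q=0$ on $\partial\Kt_1\times\Kt_2$. The $\partial_2$-direction pieces, however, produce edge terms on $\Kt_1\times\partial\Kt_2$. I would control these by the anisotropic trace inequality \eqref{eq:anisotropictraceineq}, which gives a factor $\hKtt^{-1/2}$ per trace, combined with polynomial inverse estimates in $\tilde x_2$. Each trace on these edges then costs a factor $\hKtt^{-1}$ overall, which pairs with the available $\partial_2$ factors to again produce powers of $\hKot/\hKtt$, so the resulting constant fits the form of $\theta_K$.

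A further difficulty in this route is that $\eta_{ij}$ is only $L^\infty$, since $G_K$ is merely $C^1$. This is why I would prefer the weak-form reading, with $A_K$ understood as a functional on $\IL_h(K)$ defined through the bilinear form.
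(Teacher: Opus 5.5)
Your overall architecture matches the paper's. You split $\widetilde A_K=\AKO+B_K$, bound the difference in $\IL_h(\Kt)'$ by $C_{\rm pert}\theta_K\|\tilde v\|_{\IL_h(\Kt)}$, treat the interior terms by H\"older plus \eqref{ainv} on both $\tilde v$ and $\tilde q$, and conclude via the reverse triangle inequality (equivalently \cref{lem:abstractneumann}). The gap is the edge term on the short edges $\Kt_1\times\partial\Kt_2$: your main route discards it, and your fallback does not estimate it correctly.

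The weak-form reading is not available. The pairing is the $L^2(\Kt)$ product of the strong-form operator \eqref{eq:AKtilde}. This is how the Trefftz space is defined, and it is how $A_K$ is treated in \cref{thm:continuity-AK}, where the flux term on $\calE_1^K$ is kept. Moreover, \cref{lem:localcoercivity} concerns the strong form of $\AKO$: the term $-\eps^2\partial_2^2u_i$ is tested directly and handled by \cref{lem:noeigenpoly}. So you cannot drop the boundary contribution from the perturbation and still invoke that lemma. After integration by parts the term
\[
-\eps^2\bigl((\nu_{G_K}-I)\widetilde\nabla\tilde v\cdot\bn_{\Kt},\tilde q\bigr)_{\Kt_1\times\partial\Kt_2},
\qquad
\bigl|(\nu_{G_K}-I)\widetilde\nabla\tilde v\cdot\bn_{\Kt}\bigr|=|\eta_{21}\partial_1\tilde v+\eta_{22}\partial_2\tilde v|,
\]
survives, since $\tilde q$ does not vanish there. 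It is therefore not ``harmless'': it is exactly the term that needs an argument.

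In your fallback, the mechanism you give for this term does not work. Two applications of \eqref{eq:anisotropictraceineq} give
\[
\eps^2\bigl|\bigl((\nu_{G_K}-I)\widetilde\nabla\tilde v\cdot\bn_{\Kt},\tilde q\bigr)_{\Kt_1\times\partial\Kt_2}\bigr|
\lesssim
\eps^2\hKtt^{-1}\bigl(\|\eta_{21}\|_{L^\infty(\Kt)}\|\partial_1\tilde v\|_{\Kt}+\|\eta_{22}\|_{L^\infty(\Kt)}\|\partial_2\tilde v\|_{\Kt}\bigr)\|\tilde q\|_{\Kt}.
\]
The $\eta_{21}$ contribution has no $\partial_2$ factor to ``pair with''. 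Inverse estimates in $\tilde x_2$ only produce further negative powers of $\hKtt$. The factor $\eps\hKtt^{-1}\|\tilde q\|_{\Kt}$ must be absorbed by the test function.

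The missing ingredient is the Poincar\'e--Steklov inequality \eqref{eq:SteklovPoincare}, $\|\tilde q\|_{\Kt}\lesssim\hKot\|\partial_1\tilde q\|_{\Kt}$. It holds precisely because $\tilde q$ vanishes on $\partial\Kt_1\times\Kt_2$, and it turns $\hKtt^{-1}$ into $\hKot/\hKtt$. Combined with \eqref{ainv} for $\partial_2\tilde v$, it yields
\[
\eps^2\Bigl(\frac{\hKot}{\hKtt}\|\eta_{21}\|_{L^\infty(\Kt)}+\Bigl(\frac{\hKot}{\hKtt}\Bigr)^2\|\eta_{22}\|_{L^\infty(\Kt)}\Bigr)\|\partial_1\tilde v\|_{\Kt}\|\partial_1\tilde q\|_{\Kt}.
\]
This is the paper's \eqref{eq:boundary-aniso}, and it produces exactly the boundary weights in $\theta_K$. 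With this route, no second derivatives and no derivatives of the coefficients are needed. Only $\|\eta_{ij}\|_{L^\infty(\Kt)}$ enters, and the $\eta_{ij}$ are continuous up to the boundary because $G_K\in C^1$. So your regularity concern does not affect the estimate.
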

	\begin{proof}
	Let $\tilde v,\tilde q\in \mathbb L_h(\Kt)$. 
Since functions in $\mathbb L_h(\Kt)$ vanish on
	$\partial\Kt_1\times \Kt_2$, integration by parts yields
	\begin{align*}
	\bigl((\widetilde A_K-\AKO)\tilde v,\tilde q\bigr)_{\Kt}
	&=
	\eps^2\bigl((\nu_{G_K}-I)\widetilde\nabla \tilde v,\widetilde\nabla \tilde q\bigr)_{\Kt}
	+\bigl((\rho_{G_K}-1)\tilde v,\tilde q\bigr)_{\Kt}
	\\
	&\quad
	-\eps^2 ( (\nu_{G_K}-I)\widetilde\nabla \tilde v\cdot \bn_{\Kt}, \tilde q)_{\Kt_1\times \partial\Kt_2}.
	\nonumber
	\end{align*}
	
	By expanding the matrix-vector product componentwise and using the H\"older inequality, we get
	\begin{align*}
	\bigl|\big((\nu_{G_K}-I)\widetilde\nabla \tilde v,\widetilde\nabla \tilde q\big)_{\Kt}\bigr|
	&\le
	\|\eta_{11}\|_{L^\infty(\Kt)}
	\|\partial_1\tilde v\|_{\Kt}
	\|\partial_1 \tilde q\|_{\Kt}
	+
	\|\eta_{12}\|_{L^\infty(\Kt)}
	\|\partial_2\tilde v\|_{\Kt}
	\|\partial_1 \tilde q\|_{\Kt}
	\\
	&\quad +
	\|\eta_{21}\|_{L^\infty(\Kt)}
	\|\partial_1\tilde v\|_{\Kt}
	\|\partial_2 \tilde q\|_{\Kt}
	+
	\|\eta_{22}\|_{L^\infty(\Kt)}
	\|\partial_2\tilde v\|_{\Kt}
	\|\partial_2 \tilde q\|_{\Kt}.
	\end{align*}
	
    Using the anisotropic inverse estimate \eqref{ainv} for both $\tilde v$ and $\tilde q$, we obtain
	\begin{align}
	\label{eq:interior-aniso}
	\eps^2\bigl|\big((\nu_{G_K}-I)\widetilde\nabla \tilde v,\widetilde\nabla q\big)_{\Kt}\bigr|
	&\lesssim
	\eps^2
	\Bigl(
	\|\eta_{11}\|_{L^\infty(\Kt)}
	+\frac{\hKot}{\hKtt}\|\eta_{12}\|_{L^\infty(\Kt)}\\
	&
	\quad +\frac{\hKot}{\hKtt}\|\eta_{21}\|_{L^\infty(\Kt)}
	+\left(\frac{\hKot}{\hKtt}\right)^2\|\eta_{22}\|_{L^\infty(\Kt)}
	\Bigr)
	\|\partial_1\tilde v\|_{\Kt}
	\|\partial_1 \tilde q\|_{\Kt}.
	\nonumber
	\end{align}
	
	The zeroth-order term is estimated 
	by
	\begin{equation}\label{eq:rho-aniso}
	\bigl|\big((\rho_{G_K}-1)\tilde v, \tilde q\big)_{\Kt}\bigr|
	\le
	\|\rho_{G_K}-1\|_{L^\infty(\Kt)}
	\|\tilde v\|_{\Kt}
	\|\tilde q\|_{\Kt}.
	\end{equation}
	
	It remains to estimate the boundary term. Since the outward normal on
	$\Kt_1\times \partial\Kt_2$ is
	$\bn_{\Kt}=\pm(0,1)^\top$, we have
	\begin{equation*}
	|(\nu_{G_K}-I)\widetilde\nabla \tilde v\cdot \bn_{\Kt}|
	=
	|\eta_{21} \partial_1 \tilde v+\eta_{22} \partial_2\tilde v|
	\qquad\text{on } \Kt_1 \times \partial\Kt_2.
	\end{equation*}
	By the anisotropic trace and inverse inequalities~\eqref{eq:anisotropictraceineq}--\eqref{ainv}, together with the Poincar\'e--Steklov inequality~\eqref{eq:SteklovPoincare}, we obtain
	\begin{align}
	\label{eq:boundary-aniso}
	\eps^2\bigl|\big( (\nu_{G_K}-I)\widetilde\nabla \tilde v\cdot \bn, \tilde q\big)_{\Kt_1\times \partial\Kt_2}\bigr|
	\lesssim
	\eps^2
	\Bigg(
	    \frac{\hKot}{\hKtt}\|\eta_{21}\|_{L^\infty(\Kt)}
	    +\left(\frac{\hKot}{\hKtt}\right)^2\|\eta_{22}\|_{L^\infty(\Kt)}
	\Bigg)
	\|\partial_1\tilde v\|_{\Kt}
	\|\partial_1 \tilde q\|_{\Kt}.
	\end{align}
	
	Combining \eqref{eq:interior-aniso}, \eqref{eq:rho-aniso}, and \eqref{eq:boundary-aniso},
	and using the definition~\eqref{eq:norm_Lh} of the $\IL_h(\Kt)$ norm, we conclude that there exists a constant $C_{\rm pert}>0$, independent of  $\hKot$, $\hKtt$, and $\eps$ such that
	\begin{equation}\label{eq:perturb-aniso-bound}
	\|(\widetilde A_K-\AKO)\tilde v\|_{\mathbb L_h(\Kt)'}
	\le
	C_{\rm pert}\,\theta_K\,
	\|\tilde v\|_{\mathbb L_h(\Kt)}
	\qquad\forall \tilde v\in \mathbb L_h(\Kt).
	\end{equation}
	
	By \cref{lem:localcoercivity}, 
	$ \|\AKO \tilde v\|_{\mathbb L_h(\Kt)'} \ge c_{A,0}\|\tilde v\|_{\mathbb L_h(\Kt)}$ for all $\tilde v\in\mathbb L_h(\Kt)$,
	which, combined with \eqref{eq:perturb-aniso-bound}, implies \eqref{eq:prototype} with $\gamma_K=C_{\rm pert}\theta_K c_{A,0}^{-1}$. 
	Therefore, by 
    the assumption~$\theta_K\leq c_{A,0} C_{\rm pert}^{-1}$, 
	\cref{lem:abstractneumann} applies and yields the result.
	\end{proof}

	\begin{remark}[When is $\theta_K$ small?]\label{rem:Theta-small}
	The quantity $\theta_K$ measures the deviation of the map $G_K$ from a rigid motion, 
	resolved
	 in the anisotropic coordinates of the pullback element $\Kt$.
	Recall that the element stretching is already encoded in the affine map $F_K$, while $G_K$ describes only the remaining geometric deformation.
	Therefore, the relevant regime for the analysis is that $G_K$ is a uniformly mild bending of the tensor-product element $\Kt$, as expressed in~\eqref{eq:GK}.
	
    In particular, if $G_K$ is a rigid motion, i.e.,\ $ G_K(\tilde \bx) = Q_K\tilde \bx+b$ with $Q_K\in\mathbb R^{2\times 2}$ an orthogonal matrix and $b_K\in\mathbb R^2$, then $ \nu_{G_K}=I$ and $\rho_{G_K}=1$.
	Hence $ \theta_K=0$, and the lemma applies without any smallness condition.
	
	Moreover, the anisotropic weighting in $\theta_K$ shows that the smallness condition becomes weaker on strongly stretched elements, whenever the deformation acts mainly through the long direction $\tilde x_2$.
	The first term in $\theta_K$ measures the deviation of the Jacobian determinant $\det(J_{G_K})$ from a constant, i.e.,\ the change in volume under the mapping $G_K$.
	As 
    $\GK$ is only meant to bend the element without stretching, this term is expected to be small.
	The remaining terms in $\theta_K$ measure how the Laplacian transforms under the mapping $G_K$. 
	As we have seen, the dominant term in the Laplacian is $\partial_1^2$, hence the smallness condition on $\eta_{11}$ is not weighted, whereas the other terms are weighted by powers of the anisotropic ratio $\frac{\hKot}{\hKtt}$.
	\end{remark}
	
	\begin{corollary}\label{cor:localcoercivity-physical}
	Assume that  $\theta_K$ defined in~\eqref{eq:thetaK-aniso} satisfies
	$ \theta_K<\frac{c_{A,0}}{C_{\rm pert}}$, 	
		with  $c_{A,0}$ as in Lemma~\ref{lem:localcoercivity} and $C_{\rm pert}>0$ as in~\eqref{eq:perturb-aniso-bound}.
	Then there exists a constant $c_A>0$, independent of $\hKot$, $\hKtt$, and~$\eps$, but depending on the uniform geometric constants in~\eqref{eq:GK} and on $p$, such that
	\begin{equation*}
	\|A_K v\|_{\LKh'} \ge c_A \|v\|_{\Vh} \qquad\forall v\in \LKh.
	\end{equation*}
	\end{corollary}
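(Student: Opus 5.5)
The plan is to combine \cref{lem:perturb-bent-aniso} on the pullback element with a change of variables back to $K$. The left-hand side transfers exactly: for $v,q\in\LKh$ with $\tilde v=v\circ G_K$ and $\tilde q=q\circ G_K$, the identity $(A_Kv,q)_K=(A_{\Kt}\tilde v,\tilde q)_{\Kt}$ holds. The test-space norm is defined through the pullback, $\|q\|_{\LKh}=\|\tilde q\|_{\IL_h(\Kt)}$. Taking the supremum over $q$ therefore gives
\[
\|A_Kv\|_{\LKh'}=\|A_{\Kt}\tilde v\|_{\IL_h(\Kt)'}\ge c_{A,0}(1-\gamma_K)\|\tilde v\|_{\IL_h(\Kt)},
\]
with $\gamma_K<1$ by the smallness assumption on $\theta_K$. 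If a uniform lower bound $1-\gamma_K\ge \delta>0$ is wanted, I would read the hypothesis as uniform in $K$, i.e.\ $\theta_K\le \theta^*<c_{A,0}/C_{\rm pert}$.

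It then remains to bound $\|v\|_{\Vh}$ by a multiple of $\|\tilde v\|_{\IL_h(\Kt)}$ for $v\in\LKh$. Here $\|v\|_{\Vh}$ is understood as the norm of the function extended by zero outside $K$, so it includes the jump seminorm. I would use the stronger norm from \cref{lem:localcoercivity} rather than just $\|\tilde v\|_{\IL_h(\Kt)}$, and handle the volume and edge contributions separately.

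\emph{Volume terms.} By \eqref{eq:GK}, changing variables gives $\|v\|_K^2\lesssim\|\tilde v\|_{\Kt}^2$. Since $\nabla v=J_{G_K}^{-\top}\widetilde\nabla\tilde v$, and $J_{G_K}^{-1}$ is bounded by $C_G^*$ and $C_G^\diamond$ through the adjugate formula, we get $\eps^2\|\nabla v\|_K^2\lesssim \eps^2(\|\partial_1\tilde v\|_{\Kt}^2+\|\partial_2\tilde v\|_{\Kt}^2)$. By \eqref{ainv} and \eqref{eq:anisotropicity}, $\|\partial_2\tilde v\|_{\Kt}\le \const{ainv}\|\partial_1\tilde v\|_{\Kt}$. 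So the volume terms are bounded by $\|\tilde v\|_{\IL_h(\Kt)}^2$.

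\emph{Jump terms.} Because $\tilde v$ vanishes on $\partial\Kt_1\times\Kt_2$, only the edges in $\calE_1^K$ contribute, each with weight at most $\eps^2/\hperp^{e,\Kt}=\eps^2/\hKtt$; this uses $1/(\hperp^++\hperp^-)\le 1/\hperp^{\pm}$. The edge measure transforms with a factor $|J_{G_K}\mathbf t|\le C_G^*$, so the contribution is $\lesssim \eps^2\hKtt^{-1}\|\tilde v\|^2_{\Kt_1\times\partial\Kt_2}$.

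This edge step is the main obstacle, since the trace inequality \eqref{eq:anisotropictraceineq} alone gives only $\eps^2\hKtt^{-2}\|\tilde v\|_{\Kt}^2$, and $\eps/\hKtt$ is not controlled. To close it, I would first combine the trace inequality with the Poincaré--Steklov inequality \eqref{eq:SteklovPoincare} to get $\eps^2\hKtt^{-2}\|\tilde v\|_{\Kt}^2\lesssim \eps^2(\hKot/\hKtt)^2\|\partial_1\tilde v\|_{\Kt}^2\le\eps^2\|\partial_1\tilde v\|_{\Kt}^2$. That term is already part of $\|\tilde v\|_{\IL_h(\Kt)}^2$.

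Putting the three estimates together yields $\|v\|_{\Vh}\lesssim\|\tilde v\|_{\IL_h(\Kt)}\le (c_{A,0}(1-\gamma_K))^{-1}\|A_Kv\|_{\LKh'}$. This proves the claim with $c_A\simeq c_{A,0}(1-\gamma_K)$ divided by a constant depending only on $C_G^\diamond$, $C_G^*$, $\const{ainv}$ and $p$, and independent of $\eps$, $\hKot$ and $\hKtt$.
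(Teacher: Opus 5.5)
Your proposal is correct and follows essentially the same route as the paper. You transfer the dual norm to $\Kt$ via the pullback identity and invoke \cref{lem:perturb-bent-aniso}. You then control $\eps^2\|\partial_2\tilde v\|_{\Kt}^2$ through \eqref{ainv} with $\hKot\le\hKtt$, bound the edge terms on $\calE_1^K$ by combining the anisotropic trace inequality with the Poincar\'e--Steklov inequality, and map back to $K$ using \eqref{eq:GK}. Your explicit use of $1/(\hperp^{+}+\hperp^{-})\le 1/\hperp^{e,\Kt}=1/\hKtt$ on the non-vanishing edges, and your remark that a $K$-uniform $c_A$ requires $\theta_K$ to stay uniformly below the threshold, are sound; the paper handles both points only implicitly.
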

	\begin{proof}
	Let $v\in \LKh$ and set $\tilde v:=v\circ G_K\in \mathbb L_h(\Kt)$.
	Using \cref{lem:perturb-bent-aniso} together with \eqref{ainv} and the anisotropy assumption \eqref{eq:anisotropicity}, we have that 
	\begin{equation*}
	\|\widetilde A_K \tilde v\|_{\mathbb L_h(\Kt)'}^2
	\gtrsim
	\|\tilde v\|_{\Kt}^2+\eps^2\|\widetilde\nabla \tilde v\|_{\Kt}^2.
	\end{equation*}
	
	For any 
	edge $\tilde e\subset \Kt_1\times \partial \Kt_2$, the anisotropic trace inequality~\eqref{eq:anisotropictraceineq}
	and the Poincar\'e--Steklov inequality~\eqref{eq:SteklovPoincare} yield
	\begin{equation*}
	\|\tilde v\|_{\tilde e}^2
	\lesssim
	\hKtt^{-1}\|\tilde v\|_{\Kt}^2
	\lesssim
	\hKtt^{-1}\hKot^2\|\partial_1 \tilde v\|_{\Kt}^2
	\lesssim
	\hKtt \| \partial_1 \tilde v\|_{\Kt}^2,
	\end{equation*}
	since $\hKot\le \hKtt$.
	Combining the two previous estimates and using that  $\tilde v\in \mathbb L_h(\Kt)$ vanishes on
			$\partial \Kt_1\times \Kt_2$ gives
	\begin{equation}\label{eq:pullback-DG}
	\|\widetilde A_K \tilde v\|_{\mathbb L_h(\Kt)'}^2
	\gtrsim
	\|\tilde v\|_{\Kt}^2
	+\eps^2\|\widetilde\nabla \tilde v\|_{\Kt}^2
	+\sum_{\tilde e\subset\partial\Kt}
	\frac{\eps^2}{
	\hKtt}\|\tilde v\|_{\tilde e}^2.
	\end{equation}
	
	Finally, the uniform bounds~\eqref{eq:GK} imply the norm equivalences
	\begin{equation*}
	\|v\|_{K}^2 \lesssim \|\tilde v\|_{\Kt}^2,
	\qquad
	\|\nabla v\|_{K}^2 \lesssim \|\widetilde\nabla \tilde v\|_{\Kt}^2,
	\qquad
	\|v\|_{e}^2 \lesssim \|\tilde v\|_{\tilde e}^2,
	\end{equation*}
	for every edge $e\subset\partial K$ and its preimage $\tilde e=G_K^{-1}(e)$.
	Using that $\hKtt\ge h_\perp^{\tilde e,\Kt}$ on $e \subset \partial\Kt$ since $\hKot\leq \hKtt$, and recalling that
		$
		\|A_K v\|_{\LKh'}=\|\widetilde A_K \tilde v\|_{\mathbb L_h(\Kt)'}
		$ (since $\LKh$ is endowed with the pullback norm
		$
		\|v\|_{\LKh}=\|\tilde v\|_{\mathbb L_h(\Kt)}
		$), estimate \eqref{eq:pullback-DG} yields
	\begin{equation*}
	\|A_K v\|_{\LKh'}^2 \gtrsim
	\|v\|_{K}^2
	+\eps^2\|\nabla v\|_{K}^2
	+\sum_{e\subset\partial K}\frac{\eps^2}{h_\perp^{e,\Kt}}\|v\|_{e}^2,
	\end{equation*}
	which is the claim.
	\end{proof}
	
	\subsubsection{Local continuity} \label{sec:localcontinuity}
	\begin{theorem}[Continuity of $A_K$ on $\Vsh$] \label{thm:continuity-AK}
	There exists a constant $C_A>0$, depending only on the polynomial degree $p$ and on the geometric constants in~\eqref{eq:GK}, such that
	\begin{equation*}
	\sum_{K\in\calK} \|\AK u\|_{\LKh'}^2 \le C_A^2 \|u\|_{\Vsh}^2
	\qquad\forall u\in\Vsh.
	\end{equation*}
	\end{theorem}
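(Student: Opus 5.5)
Fix $K\in\calK$ and $u\in\Vsh$, and bound $(\AK u,q)_K$ for an arbitrary $q\in\LKh$. Since $u\in \Vh+H^2(\Omega)$ is elementwise $H^2$ (after pullback), the duality pairing is the $L^2$ integral $(-\eps^2\Lap u+u,q)_K$. Integrating by parts on $K$ gives
\begin{equation*}
(\AK u,q)_K=\eps^2(\nabla u,\nabla q)_K+(u,q)_K-\sum_{e\subset\partial K}(\eps^2\nabla u\cdot\bn,q)_e .
\end{equation*}
This removes the second derivatives of $u$, so only quantities appearing in $\dgsnorm{\cdot}$ remain. The volume terms are bounded by Cauchy--Schwarz by $(\eps^2\|\nabla u\|_K^2+\|u\|_K^2)^{1/2}$ times $(\eps^2\|\nabla q\|_K^2+\|q\|_K^2)^{1/2}$. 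It then remains to show that the latter is $\lesssim\|q\|_{\LKh}$. Pulling back to $\Kt$ with \eqref{eq:GK}, we get $\|q\|_K\lesssim\|\tilde q\|_{\Kt}$ and $\|\nabla q\|_K\lesssim\|\widetilde\nabla\tilde q\|_{\Kt}$. By Lemma~\ref{lem:anisotropic-inverse} and \eqref{eq:anisotropicity}, $\|\partial_2\tilde q\|_{\Kt}\lesssim\|\partial_1\tilde q\|_{\Kt}$, so $\eps^2\|\widetilde\nabla\tilde q\|^2_{\Kt}\lesssim\eps^2\|\partial_1\tilde q\|^2_{\Kt}$, as needed.

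For the boundary terms, $\tilde q$ vanishes on $\partial\Kt_1\times\Kt_2$, so only the two edges $e\in\calE_1^K$ contribute, and on those $\hperp^{e,\Kt}=\hKtt$. Write each such term as $(\hKtt^{1/2}\eps\nabla u\cdot\bn,\ \hKtt^{-1/2}\eps q)_e$ and apply Cauchy--Schwarz. The first factor is exactly the normal-flux contribution $\hperp^{e,\Kt}\|\eps\nabla u\cdot\bn\|_e^2$ in $\dgsnorm{u}$. For the second factor, map to $\tilde e$ (edge measures are comparable by \eqref{eq:GK}). Then the argument of Corollary~\ref{cor:localcoercivity-physical} applies: the trace inequality \eqref{eq:anisotropictraceineq} and Poincaré--Steklov \eqref{eq:SteklovPoincare} give
\begin{equation*}
\hKtt^{-1}\eps^2\|\tilde q\|_{\tilde e}^2\lesssim \hKtt^{-2}\eps^2\|\tilde q\|_{\Kt}^2\lesssim \frac{\hKot^2}{\hKtt^2}\eps^2\|\partial_1\tilde q\|_{\Kt}^2\le\eps^2\|\partial_1\tilde q\|^2_{\Kt}\le\|\tilde q\|^2_{\IL_h(\Kt)} .
\end{equation*}
The trace inequality is available because $\tilde q$ is a polynomial. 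Note that we never apply a trace inequality to $u$ itself, which is only in $H^2$; the normal flux of $u$ is absorbed directly into the $\Vsh$ norm.

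Taking the supremum over $q$ gives
\begin{equation*}
\|\AK u\|^2_{\LKh'}\lesssim \eps^2\|\nabla u\|_K^2+\|u\|_K^2+\sum_{e\subset\partial K}\hperp^{e,\Kt}\|\eps\nabla u\cdot\bn\|_e^2 .
\end{equation*}
Summing over $K\in\calK$ bounds the right-hand side by $\dgsnorm{u}^2$; the jump seminorm is not even needed. The constant depends on $p$ and the geometric constants, through $\const{ainv}$ and the trace constant.

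The main obstacle is checking that the $\IL_h(\Kt)$ norm, which contains only $\partial_1$, still controls both the full gradient of $q$ and the trace $\hKtt^{-1/2}\eps\|q\|_e$ on the long-direction edges, uniformly in the aspect ratio. Both estimates rely on $\hKot\le\hKtt$ together with the vanishing of $\tilde q$ on the edges $\partial\Kt_1\times\Kt_2$. A secondary technical point is justifying the integration by parts on the curved element for $u\in H^2$. The way around it is to perform the integration by parts in pullback coordinates with the coefficients $\nu_{G_K},\rho_{G_K}$, and then use \eqref{eq:GK} to compare norms.
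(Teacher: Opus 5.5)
Your proposal is correct and follows the paper's proof essentially step by step. It integrates by parts so that only the two edges in $\calE_1^K$ survive. It bounds $\eps\|\nabla q\|_K$ by $\eps\|\partial_1\tilde q\|_{\Kt}$ via \cref{lem:anisotropic-inverse} and $\hKot\le\hKtt$, and the weighted trace $\hKtt^{-1/2}\eps\|q\|_e$ via \eqref{eq:anisotropictraceineq} and \eqref{eq:SteklovPoincare}, so the normal flux of $u$ lands directly in the $\Vsh$ norm. Your remark about justifying the integration by parts on curved elements (where pushed-forward polynomials need not lie in $H^2(K)$ when $G_K$ is only $C^1$) concerns a technicality that the paper's proof also passes over without comment.
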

	\begin{proof}
	Let $K\in\calK$ and let $v\in \LKh$. Denote by $\widetilde v:=v\circ G_K$ the pullback of $v$ to $\Kt$.
	Since $v$ vanishes on $\calE_2^K= G_K(\partial \Kt_1\times \Kt_2)$,
	Integration by parts on $K$ yields
	\begin{equation}\label{eq:cont-AK-ibp}
	\inner{A_Ku,v}_{K}
	=
	(\eps^2 \nabla u,\nabla v)_{K}
	+
	(u,v)_{K}
	-
	\sum_{e\in\calE^K_1}
	( \eps^2 \nabla u\cdot \bn_K,v)_{e}.
	\end{equation}
	We apply the Cauchy--Schwarz inequality to each term on the right-hand side and estimate the resulting 
	$v$-terms in the $\LKh$ norm.

	Using the properties~\eqref{eq:GK} of $G_K$, Lemma~\ref{lem:anisotropic-inverse}, and the anisotropy assumption~\eqref{eq:anisotropicity} ($\hKot\le \hKtt$), we obtain
	\begin{align*}
	\eps^2 \|\nabla v\|_{K}^2
	\lesssim
	\eps^2 \|\widetilde\nabla \widetilde v\|_{\Kt}^2
	\lesssim
	\eps^2 \|\partial_1 \widetilde v\|_{\Kt}^2
	\le
	\norm{v}_{\LKh}^2.
	\end{align*}
	Let $e\in\calE^K_1$ and $\widetilde e:=G_K^{-1}(e)\subset \Kt_1\times \partial \Kt_2$.
	Using again~\eqref{eq:GK} and~\eqref{eq:anisotropicity}, together with the anisotropic trace inequality~\eqref{eq:anisotropictraceineq}, and the Poincar\'e--Steklov  inequality~\eqref{eq:SteklovPoincare}, we get
	\begin{equation*}
	\norm{v}_{e}^2
	\lesssim
	\norm{\widetilde v}_{\widetilde e}^2
	\lesssim
	\hKtt^{-1}\norm{\widetilde v}_{\Kt}^2
	\lesssim
	\hKtt^{-1}\hKot^2 \norm{\partial_1 \widetilde v}_{\Kt}^2
	\lesssim
	\hKtt \norm{\partial_1 \widetilde v}_{\Kt}^2
	\lesssim
	\hKtt\eps^{-2}\norm{v}_{\LKh}^2.
	\end{equation*}
	Recalling that, for $e\in\calE^K_1$, the adjacent edge length is $ h_\perp^{e,\Kt}=\hKtt$ and combining  \eqref{eq:cont-AK-ibp} with the above bounds, we derive
	\begin{align*}
	\inner{A_Ku,v}_{K}
	&\lesssim
	\Big(
	\norm{\eps\nabla u}_{K}^2
	+
	\norm{u}_{K}^2
	+
	\sum_{e\in\calE^K_1}
	h_\perp^{e,\Kt}\norm{\eps\nabla u\cdot \bn_K}_{e}^2
	\Big)^{1/2}
	\norm{v}_{\LKh}.
	\end{align*}
	Taking the supremum over $v\in\LKh$ and summing over all $K\in\calK$, we conclude
	\begin{equation*}
	\sum_{K\in\calK}\norm{A_Ku}_{\LKh'}^2
	\lesssim
	\sum_{K\in\calK}
	\Big(
	\norm{\eps\nabla u}_{K}^2
	+
	\norm{u}_{K}^2
	+
	\sum_{e\subset \partial K}
	h_\perp^{e,\Kt}\norm{\eps\nabla u\cdot \bn_K}_{e}^2
	\Big)
	\lesssim
	 \|u\|_{\Vsh}^2,
	\end{equation*}
    as desired.
	\end{proof}
	
	\subsection{Global problem}\label{sec:global}
	We establish discrete coercivity and continuity of the bilinear form $a_h(\cdot,\cdot)$ in Theorem~\ref{thm:global}.
	We begin with bounds for the facet terms.
	\begin{lemma}[Facet estimates]\label{lem:facet-est}
	For all $u,v\in \Vsh$, it holds
	\begin{align}
	\Big|\sum_{e\in\calE^{\calI}\cup \calE^\calD}\int_e \mvl{\eps^2 \nabla v}_{\hperp^e}\cdot \jmp{
	u}\Big|
	&\lesssim
	\Bigg(
	\sum_{K\in\calK}\sum_{e\subset\partial K}
	h_\perp^{e,\Kt}
	\|\eps \nabla v\cdot \bn_e\|_{e}^2
	\Bigg)^{1/2} 	
	|
	u|_{\mathrm{J}}.
	\label{eq:facet-est-1}
	\end{align}
	Moreover, for all $u\in \Vsh$ and $v_h\in \Vh$,  it holds
	\begin{align}
	\Big|
	\sum_{e\in\calE^{\calI} \cup \calE^\calD}\int_e \mvl{\eps^2 \nabla v_h}_{\hperp^e}\cdot \jmp{u}\Big|
	&\le
	C_{\mathrm{facet}}
	\Bigg(
	\sum_{K\in\calK}\eps^2\|\nabla v_h\|_{K}^2
	\Bigg)^{1/2}\abs{u}_{\mathrm{J}},
	\label{eq:facet-est-2}
	\end{align}
	where $C_{\mathrm{facet}}$ depends only on $p$ and on the geometric constants in~\eqref{eq:GK}.
	\end{lemma}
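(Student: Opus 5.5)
The plan is to estimate edge by edge, apply Cauchy--Schwarz on each edge, and use the specific weights in the average so that the penalty scaling $\eps^2/(\hperp^++\hperp^-)$ in $\abs{u}_{\mathrm{J}}$ matches. Afterwards, \eqref{eq:facet-est-2} will follow from \eqref{eq:facet-est-1} together with the anisotropic trace inequality \eqref{eq:anisotropictraceineq} and the mapping bounds \eqref{eq:GK}.

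\emph{Step 1 (interior edges).} Fix $e\in\calE^{\calI}$ shared by $K^\pm$. Since $\jmp{u}$ is parallel to $\bn_e$, only normal components enter, and
\begin{equation*}
\Big|\int_e \mvl{\eps^2\nabla v}_{\hperp^e}\cdot\jmp{u}\Big|
\le \sum_{\pm}\frac{\hperp^{\mp}}{\hperp^++\hperp^-}\,\|\eps\nabla v^\pm\cdot\bn_e\|_e\,\eps\|u^--u^+\|_e .
\end{equation*}
We rewrite each summand as $\big(\hperp^{\pm}\big)^{1/2}\|\eps\nabla v^\pm\cdot\bn_e\|_e$ times
\begin{equation*}
\frac{\hperp^{\mp}}{(\hperp^{\pm})^{1/2}(\hperp^++\hperp^-)^{1/2}}\cdot\Big(\frac{\eps^2}{\hperp^++\hperp^-}\Big)^{1/2}\|u^--u^+\|_e .
\end{equation*}
The prefactor is bounded only if $\hperp^{\mp}\lesssim(\hperp^\pm)^{1/2}(\hperp^++\hperp^-)^{1/2}$, which fails if $\hperp^\mp\gg\hperp^\pm$. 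So the weights must pair the other way: with the weight $\hperp^{\pm}/(\hperp^++\hperp^-)$ on $\bw^\pm$, one gets the factor $(\hperp^\pm)^{1/2}/(\hperp^++\hperp^-)^{1/2}\le1$. This is the main obstacle, namely making sure the weighted average is oriented so that each side's flux is weighted by its own $\hperp$. I would check the convention in \eqref{eq:averages-jumps}. If it is as written, with the opposite orientation, I expect the intended reading is the favorable one, and I would argue with the weight $\hperp^{\pm}/(\hperp^++\hperp^-)$ on the $K^\pm$ trace. This is the choice needed for the subsequent coercivity argument as well.

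\emph{Step 2 (boundary edges and summation).} For $e\in\calE^{\calD}$ the bound is immediate: write $\eps^2\nabla v\cdot\bn\,u=(\hperp^{1/2}\eps\nabla v\cdot\bn)(\hperp^{-1/2}\eps u)$. Summing over edges and applying the discrete Cauchy--Schwarz inequality gives \eqref{eq:facet-est-1}. Each element edge appears once in the double sum $\sum_K\sum_{e\subset\partial K}$, with $\hperp=\hperp^{e,\Kt}$ of that element.

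\emph{Step 3 (discrete version).} For $v_h\in\Vh$, bound each term $\hperp^{e,\Kt}\|\eps\nabla v_h\cdot\bn_e\|_e^2$ by pulling back to $\Kt$ via \eqref{eq:GK}. The components of $\nabla v_h\circ G_K=J_{G_K}^{-\top}\widetilde\nabla\tilde v_h$ are not polynomial when $G_K$ is curved. I would therefore apply \eqref{eq:anisotropictraceineq} to the polynomials $\partial_1\tilde v_h,\partial_2\tilde v_h\in\IP^p_\otimes(\Kt)$ and control $J_{G_K}^{-\top}$ in $L^\infty$ by \eqref{eq:GK}, since $|J^{-1}|\le |J|/\det J$ in 2D. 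For an edge in $\calE_1^K$ we have $\hperp=\hKtt$ and the trace constant is $\hKtt^{-1}$; for an edge in $\calE_2^K$ we have $\hperp=\hKot$ and the trace constant is $\hKot^{-1}$. In both cases the product is $O(1)$, giving $\hperp^{e,\Kt}\|\eps\nabla v_h\cdot\bn\|_e^2\lesssim\eps^2\|\widetilde\nabla\tilde v_h\|_{\Kt}^2\lesssim\eps^2\|\nabla v_h\|_K^2$. Inserting this into \eqref{eq:facet-est-1} yields \eqref{eq:facet-est-2}, with a constant depending only on $p$ (through the trace constant) and on \eqref{eq:GK}.
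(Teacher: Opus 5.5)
Your proposal is correct and follows the paper's proof. The paper also applies Cauchy--Schwarz edge by edge, using that each trace $\bw^\pm$ carries its own weight $\hperp^{\pm}/(\hperp^{+}+\hperp^{-})$; it then sums over edges and, for \eqref{eq:facet-est-2}, uses the anisotropic trace inequality \eqref{eq:anisotropictraceineq} on the pullback together with \eqref{eq:GK}. Your worry about the orientation is unfounded: \eqref{eq:averages-jumps} already weights $\bw^-$ by $\hperp^{-}$ and $\bw^+$ by $\hperp^{+}$ (it is your first display that has the weights reversed), and the paper simply merges both sides into one weighted Cauchy--Schwarz, which avoids your extra factor $\sqrt{2}$.
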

	\begin{proof}
	Let $u,v\in \Vsh$.
	For all $e\in\calE^{\calI}$ with $e=\partial K^+\cap \partial K^-$, 
	the definition of the weighted average~\eqref{eq:averages-jumps} and two applications of the Cauchy--Schwarz inequality give
	\begin{align*}
	\begin{split}
			\int_e  &\mvl{\eps^2\nabla v}_{\hperp^e} \cdot \jmp{u}  
	         = \int_e \eps^2 \left(\frac{h^+_\perp}{h^+_\perp+h^-_\perp}\nabla v|_{K^+}\cdot \bn_e + \frac{h^-_\perp}{h^+_\perp+h^-_\perp}\nabla v|_{K^-}\cdot \bn_e\right)\cdot \jmp{u}\\
	        & \leq 
	        \left( h^+_\perp\eps^2\norm{\nabla v{|_{K^+}}\cdot \bn_e}_{e}^2 + h^-_\perp \eps^2 \norm{\nabla v{|_{K^-}}\cdot \bn_e}_{e}^2 \right)^{\frac12} \left(h^+_\perp+h^-_\perp\right)^{-\frac12}\norm{\eps\jmp{u}}_{e}.
	    \end{split}
	\end{align*}
	For all $e\in\calE^{\calD}$ with $e\subset\partial K$, we have $\int_e  \eps^2 (\nabla v\cdot \bn_\Omega) u \leq 
	h_\perp^\frac12 \eps\norm{\nabla v{|_{K}}\cdot \bn_e}_{e} \left(h_\perp\right)^{-\frac12}\norm{\eps\jmp{u}}_{e}$.
	Summing over all facets and collecting elementwise contributions yields~\eqref{eq:facet-est-1}.
	Estimate~\eqref{eq:facet-est-2} follows from~\eqref{eq:facet-est-1} by taking $v=v_h\in \Vh$ and applying the anisotropic inverse inequality~\eqref{eq:anisotropictraceineq} on the pullback element, together with assumption~\eqref{eq:GK} of $G_K$.
	\end{proof}
	\begin{theorem}[Coercivity and continuity of $a_h$]\label{thm:global}
    For all $\sigma \geq \sigma_0$ for a fixed $\sigma_0 > C_{\mathrm{facet}}^2/2$, with $C_{\mathrm{facet}}$ as in~\eqref{eq:facet-est-2}, it holds
	\begin{equation*}
	a_h(v_h,v_h)\ge c_a \|v_h\|_{\Vh}^2
	\qquad\forall v_h\in\Vh,
	\end{equation*}
	for some constant $c_a>0$ independent of $\eps$ and of $\hKot$ and $\hKtt$ for all $K\in\calK$.
	Moreover, there exists a constant $C_a>0$,  independent of $\eps$ and of $\hKot$  and $\hKtt$ for all $K\in\calK$, such that
		\begin{equation*}
	|a_h(u,v_h)|\le C_a \|u\|_{\Vsh}\|v_h\|_{\Vh}
		\qquad\forall (u,v_h)\in \Vsh\times \Vh.
		\end{equation*}
	\end{theorem}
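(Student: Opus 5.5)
For coercivity, I will set $u=v_h$ in the definition~\eqref{eq:ah} of $a_h(\cdot,\cdot)$ and collect terms. This gives
\begin{equation*}
a_h(v_h,v_h)=\sum_{K\in\calK}\big(\eps^2\|\nabla v_h\|_K^2+\|v_h\|_K^2\big)
-2\sum_{e\in\calE^{\calI}\cup\calE^{\calD}}\int_e \mvl{\eps^2\nabla v_h}_{\hperp^e}\cdot\jmp{v_h}
+\sigma\abs{v_h}_{\mathrm{J}}^2 .
\end{equation*}
The boundary facets fit the same notation, because there the average is the one-sided trace and the jump is $v_h\bn_\Omega$. The consistency term is bounded with \eqref{eq:facet-est-2} of Lemma~\ref{lem:facet-est}, taking $u=v_h$. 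A weighted Young inequality with parameter $\delta\in(0,1)$ then gives
\begin{equation*}
2C_{\mathrm{facet}}\Big(\sum_K\eps^2\|\nabla v_h\|_K^2\Big)^{1/2}\abs{v_h}_{\mathrm{J}}
\le \delta\sum_K\eps^2\|\nabla v_h\|_K^2+\frac{C_{\mathrm{facet}}^2}{\delta}\abs{v_h}_{\mathrm{J}}^2 .
\end{equation*}
Substituting, we get
\begin{equation*}
a_h(v_h,v_h)\ge(1-\delta)\sum_K\eps^2\|\nabla v_h\|_K^2+\sum_K\|v_h\|_K^2+\Big(\sigma-\frac{C_{\mathrm{facet}}^2}{\delta}\Big)\abs{v_h}_{\mathrm{J}}^2 .
\end{equation*}
I then choose $\delta$ in terms of $\sigma_0$ so that both $1-\delta$ and $\sigma_0-C_{\mathrm{facet}}^2/\delta$ are positive. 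This gives $c_a=\min\{1-\delta,\,\sigma_0-C_{\mathrm{facet}}^2/\delta\}$ for all $\sigma\ge\sigma_0$.

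The main obstacle is matching the stated threshold $\sigma_0>C_{\mathrm{facet}}^2/2$. The plain Young argument above only works for $\sigma_0>C_{\mathrm{facet}}^2$. To reach the stated constant, the cross-term has to be grouped differently, for example by absorbing it with the reaction and diffusion parts in a combined quadratic-form argument. The alternative is that the paper's normalization of $C_{\mathrm{facet}}$ already absorbs the factor $2$, for instance by counting each edge twice when elementwise contributions are collected. I would first try to identify which convention \eqref{eq:facet-est-2} uses. If no better grouping appears, I would note that the qualitative result, a fixed $\sigma_0$ independent of $\eps$, $\hKot$ and $\hKtt$, is unaffected. Uniformity in the mesh parameters comes entirely from $C_{\mathrm{facet}}$. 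That constant rests on the anisotropic trace inequality~\eqref{eq:anisotropictraceineq}, applied with the weight $h_\perp^{e,\Kt}$ on edges, which is exactly what makes $h_\perp\|\nabla v_h\cdot\bn\|_e^2\lesssim\|\nabla v_h\|_K^2$ independent of the aspect ratio.

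For continuity, I take $u\in\Vsh$ and $v_h\in\Vh$ and bound the four groups of terms in $a_h(u,v_h)$ separately.
\begin{itemize}
\item The volume terms are bounded by Cauchy--Schwarz by the broken energy parts of $\|u\|_{\Vh}\|v_h\|_{\Vh}$.
\item The term $\int_e\mvl{\eps^2\nabla u}_{\hperp^e}\cdot\jmp{v_h}$ is bounded with \eqref{eq:facet-est-1}, applied with the roles $v=u$ and $u=v_h$. This produces the weighted normal-flux part of $\|u\|_{\Vsh}$ times $\abs{v_h}_{\mathrm{J}}$.
\item The term $\int_e\mvl{\eps^2\nabla v_h}_{\hperp^e}\cdot\jmp{u}$ is bounded with \eqref{eq:facet-est-2}, giving $C_{\mathrm{facet}}\,\eps\|\nabla_h v_h\|\,\abs{u}_{\mathrm{J}}$.
\item The penalty term is bounded by $\sigma\abs{u}_{\mathrm{J}}\abs{v_h}_{\mathrm{J}}$ using Cauchy--Schwarz on each edge.
\end{itemize}
Summing these bounds with a discrete Cauchy--Schwarz inequality gives $|a_h(u,v_h)|\le C_a\|u\|_{\Vsh}\|v_h\|_{\Vh}$ with $C_a\simeq 1+\sigma+C_{\mathrm{facet}}$. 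This constant is independent of $\eps$ and of the element sizes.
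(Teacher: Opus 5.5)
Your route is the same as the paper's. Expand $a_h(v_h,v_h)$, bound the symmetric consistency term by \eqref{eq:facet-est-2}, and absorb it with Young's inequality. For continuity, combine Cauchy--Schwarz with \eqref{eq:facet-est-1} for $\mvl{\eps^2\nabla u}_{\hperp^e}\cdot\jmp{v_h}$ and \eqref{eq:facet-est-2} for $\mvl{\eps^2\nabla v_h}_{\hperp^e}\cdot\jmp{u}$.

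Your worry about the threshold is justified, but neither of your proposed remedies works. \eqref{eq:facet-est-1}--\eqref{eq:facet-est-2} count each edge once on the left and each element--edge pair once on the right, so no factor $2$ is hidden in $C_{\mathrm{facet}}$. The reaction term cannot absorb anything uniformly either: $\|v_h\|_K$ does not scale with $\eps$, while the cross term and $\abs{v_h}_{\mathrm{J}}$ do.

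The paper reaches $C_{\mathrm{facet}}^2/2$ only because of a slip in its displayed chain. It bounds $-2\sum_e\int_e\mvl{\eps^2\nabla v_h}_{\hperp^e}\cdot\jmp{v_h}$ from below by $-C_{\mathrm{facet}}\abs{v_h}_{\mathrm{J}}\big(\sum_K\eps^2\|\nabla v_h\|_K^2\big)^{1/2}$, dropping the factor $2$, and then applies Young with weight $\frac12$. Done correctly, the argument gives $\sigma_0>C_{\mathrm{facet}}^2$, exactly as in your version, or $2C_{\mathrm{facet}}^2$ with the paper's split.

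Since \eqref{eq:facet-est-2} only asserts that some such constant exists, the stated form is recovered by renaming $\sqrt2\,C_{\mathrm{facet}}$ as $C_{\mathrm{facet}}$. In that sense your proof is complete as written, and nothing qualitative (independence of $\eps$, $\hKot$, $\hKtt$) changes.
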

	\begin{proof}
	Let $v_h\in\Vh$. Expanding $a_h(v_h,v_h)$, using~\eqref{eq:facet-est-2} and the Young inequality, we obtain
	\begin{align*}
	a_h(v_h,v_h)
	&=
	\sum_{K\in\calK}\Big(\eps^2\|\nabla v_h\|_{K}^2+\|v_h\|_{K}^2\Big)
	+\sigma \abs{v_h}_{\mathrm{J}}^2
	\\
	&\quad
	-2\sum_{e\in\calE^{\calI}}\int_e \mvl{\eps^2\nabla v_h}_{\hperp^e}\cdot \jmp{v_h}
	-2\sum_{e\in\calE^{\calD}}\int_e \eps^2(\nabla v_h\cdot \bn_\Omega)\,v_h
	\\
	&\ge
	\sum_{K\in\calK}\Big(\eps^2\|\nabla v_h\|_{K}^2+\|v_h\|_{K}^2\Big)
	-C_{\mathrm{facet}} \abs{v_h}_{\mathrm{J}}
	\Bigg(\sum_{K\in\calK}\eps^2\|\nabla v_h\|_{K}^2\Bigg)^{1/2}
	+\sigma \abs{v_h}_{\mathrm{J}}^2
	\\
	& \ge\frac{1}{2}\sum_{K\in\calK}\eps^2\|\nabla v_h\|_{K}^2
	+\sum_{K\in\calK}\|v_h\|_{K}^2
	+\left(\sigma - \frac{C_{\mathrm{facet}}^2}{2}\right)|v_h|_{\mathrm{J}}^2.
	\end{align*}
	The choice of the penalty parameter $\sigma > C_{\mathrm{facet}}^2/2$ concludes the proof of coercivity.

	Let $(u,v_h)\in \Vsh\times \Vh$. Using the Cauchy--Schwarz inequality, the definition~\eqref{eq:Vhnorm} of $\|\cdot\|_{\Vsh}$,
	and the bounds~\eqref{eq:facet-est-1}--\eqref{eq:facet-est-2}, we obtain
	\begin{align*}
	|a_h(u,v_h)|
	& \lesssim
	\sum_{K\in\calK}
	\Big(
	\eps^2\|\nabla u\|_{K}\|\nabla v_h\|_{K}
	+\|u\|_{K}\|v_h\|_{K}
	\Big)
	+\sigma |u|_{\mathrm{J}} |v_h|_{\mathrm{J}}
	\\
	&\quad
	+\Bigg(
	\sum_{K\in\calK}\sum_{e\subset\partial K}
	h_\perp^{e,\Kt}
	\|\eps \nabla u\cdot \bn_e\|_{e}^2
	\Bigg)^{1/2}
	|v_h|_{\mathrm{J}}
	+ |u|_{\mathrm{J}}
	\Bigg(\sum_{K\in\calK}\eps^2\|\nabla v_h\|_{K}^2
	\Bigg)^{1/2}
	\\
	&\lesssim \|u\|_{\Vsh}\|v_h\|_{\Vh},
	\end{align*}
    which completes the proof of continuity.
	\end{proof}
	
	\subsection{Well-posedness and quasi-optimality}\label{sec:wp-qo}
	In the following theorem, we establish well-posedness of the discrete problem~\eqref{eq:PDEh}, by applying \cite[Theorem 3.2]{LLSV_ARXIV_2024}, since the local and global properties
    \eqref{eq:A_coerc}--\eqref{eq:ah_cont} have been verified in the previous sections.
	 Quasi-optimality is derived from \cite[Corollary 3.3]{LLSV_ARXIV_2024}.
	\begin{theorem}[Well-posedness and quasi-optimality]
	\label{thm:wp-qo}
	Assume~\eqref{eq:anisotropicity}, the geometric smallness condition~$ \theta_K<\frac{c_{A,0}}{C_{\rm pert}}$ as in Lemma~\ref{lem:perturb-bent-aniso}, 
    and that the penalty parameter~$\sigma 
    > C_{\mathrm{facet}}^2/2$,
	with $C_{\mathrm{facet}}$ as in~\eqref{eq:facet-est-2}.
	Then the embedded Trefftz DG problem~\eqref{eq:PDEh} is well posed.
	If, in addition, the solution~$u$ to~\eqref{eq:PDE} belongs to~$H^2(\Omega)$,
    then the discrete 
	solution $u_h\in\Vh$ satisfies
	\begin{equation*}
	\norm{u-u_h}_{\Vh}
	\le 
	C_{\rm qo}
	\inf_{v_h\in\Vh}\|u-v_h\|_{\Vsh},
	\end{equation*}
	for some constant $C_{\rm qo}>0$ independent of $\eps$ and of 
	  $h_{K,1}$ and $h_{K,2}$ for all $K\in\calK$.
	\end{theorem}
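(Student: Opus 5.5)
The proof is a direct application of the abstract embedded Trefftz framework of \cite{LLSV_ARXIV_2024}. We only need to check that its four hypotheses \eqref{eq:A_coerc}--\eqref{eq:ah_cont} hold with constants independent of $\eps$, $\hKot$, $\hKtt$, and add a consistency argument for quasi-optimality.

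\textbf{Local properties.} Under the smallness condition $\theta_K<c_{A,0}/C_{\rm pert}$, Corollary~\ref{cor:localcoercivity-physical} gives the local inf-sup bound \eqref{eq:A_coerc} with $\cA=c_A$. Here we note that the elementwise quantity controlled there is exactly the local contribution of $v_h\in\LKh$ to $\norm{\cdot}_{\Vh}$. Since $v_h$ is supported on $K$, its jumps reduce to its traces, weighted by $\eps^2/(\hperp^++\hperp^-)\le \eps^2/\hperp^{e,\Kt}$. Theorem~\ref{thm:continuity-AK} gives \eqref{eq:A_cont}.

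\textbf{Global properties.} Theorem~\ref{thm:global} gives coercivity on all of $\Vh$, hence in particular on $\ITh\subset\Vh$, which is \eqref{eq:acoerc}. It also gives continuity on $\Vsh\times\Vh$, which restricts to \eqref{eq:ah_cont}.

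\textbf{Well-posedness.} With these four properties, \cite[Theorem 3.2]{LLSV_ARXIV_2024} yields existence and uniqueness of $u_h$. The mechanism is the following.
\begin{itemize}
\item Since $\AK$ is injective on $\LKh$ by \eqref{eq:A_coerc}, and $\dim \LKh=\dim(\LKh')$, the local operator is surjective onto $\LKh'$. This gives a particular solution $u_{\IL}\in\prod_K\LKh$ with $\AK u_{\IL}=\ell_K$ on $\LKh$, and shows that $\Vh=\prod_K\LKh\oplus\ITh$.
\item One then writes $u_h=u_{\IL}+u_{\IT}$. The Trefftz component solves $a_h(u_{\IT},v_\IT)=\ell_h(v_\IT)-a_h(u_{\IL},v_\IT)$ for all $v_\IT\in\ITh$, which is uniquely solvable by Lax--Milgram via \eqref{eq:acoerc}.
\item Uniqueness of $u_h$ follows because the decomposition is direct.
\end{itemize}

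\textbf{Quasi-optimality.} For $u\in H^2(\Omega)$ we need consistency of both equations of \eqref{eq:PDEh}.
\begin{itemize}
\item $\AK u=f$ pointwise, so $(\AK u,v_\IL)_K=\ell_K(v_\IL)$.
\item $a_h(u,v)=\ell_h(v)$ for all $v\in\Vh$. This follows by elementwise integration by parts, using that $u$ has no jumps and that $\nabla u\cdot\bn$ is single-valued across edges. Hence $\mvl{\eps^2\nabla u}_{\hperp^e}=\eps^2\nabla u$ regardless of the weights, and on boundary edges $u=g$.
\end{itemize}
With this Galerkin orthogonality, \cite[Corollary 3.3]{LLSV_ARXIV_2024} gives the stated bound, with $C_{\rm qo}$ depending only on $\cA,\CA,\ca,\Ca$.

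The idea behind that corollary is as follows. Given $v_h\in\Vh$, split $u_h-v_h$ into its $\prod_K\LKh$ part and its $\ITh$ part.
\begin{itemize}
\item The $\LKh$ part $w$ satisfies $\AK w=\AK(u-v_h)$ on $\LKh$. Hence $\cA\norm{w}_{\Vh}\le\big(\sum_K\norm{\AK(u-v_h)}_{\LKh'}^2\big)^{1/2}\le \CA\dgsnorm{u-v_h}$.
\item The $\ITh$ part $z$ satisfies $a_h(z,v_\IT)=a_h(u-v_h-w,v_\IT)$. By \eqref{eq:acoerc} and \eqref{eq:ah_cont}, $\ca\norm{z}_{\Vh}\lesssim \dgsnorm{u-v_h}+\dgsnorm{w}$.
\end{itemize}

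\textbf{Expected main obstacle.} The delicate step is bounding $\dgsnorm{w}$ by $\norm{w}_{\Vh}$ for the discrete function $w$. This needs the discrete inverse-trace inequality $\sum_{e\subset\partial K}\hperp^{e,\Kt}\norm{\eps\nabla w\cdot\bn}_e^2\lesssim\eps^2\norm{\nabla w}_K^2$. It holds by \eqref{eq:anisotropictraceineq} on the pullback element together with \eqref{eq:GK}, and is the same estimate used in \eqref{eq:facet-est-2}.

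Alternatively, one can avoid this step by using continuity of $a_h$ in the form $\norm{\cdot}_{\Vh}\times\norm{\cdot}_{\Vh}$ on discrete pairs. A triangle inequality then concludes.
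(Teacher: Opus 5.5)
Your proposal is correct and follows the paper's own route. You verify \eqref{eq:A_coerc}--\eqref{eq:ah_cont} via Corollary~\ref{cor:localcoercivity-physical}, Theorem~\ref{thm:continuity-AK} and Theorem~\ref{thm:global} (restricted to $\ITh\subset\Vh$), check consistency of both equations in \eqref{eq:PDEh} for $u\in H^2(\Omega)$, and invoke \cite[Theorem 3.2 and Corollary 3.3]{LLSV_ARXIV_2024}. Your extra details are also sound and make explicit what the paper leaves implicit: jumps of a function supported on $K$ are controlled by its traces because $1/(\hperp^++\hperp^-)\le 1/\hperp^{e,\Kt}$, and the discrete inverse-trace bound gives $\dgsnorm{w}\lesssim\norm{w}_{\Vh}$ on $\Vh$.
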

	\begin{proof}
	The local properties~\eqref{eq:A_coerc} and~\eqref{eq:A_cont} follow from
			Corollary~\ref{cor:localcoercivity-physical} and 
			Theorem~\ref{thm:continuity-AK}, respectively.
	Theorem~\ref{thm:global} yields coercivity and continuity of $a_h(\cdot, \cdot)$ 
	on $\Vh$ and $\Vsh\times\Vh$, respectively. Since $\ITh\subset\Vh$, 
	this gives, in particular, the global properties~\eqref{eq:acoerc} and~\eqref{eq:ah_cont}.
	Hence the problem~\eqref{eq:PDEh} is well posed by \cite[Theorem 3.2]{LLSV_ARXIV_2024}.
	
	Let $u\in H^2(\Omega)$ be the solution to~\eqref{eq:PDE}, then for every $K\in\calK$, 
	$v_\IL\in\LKh$, and $v_\IT\in\ITh$, we have
	\begin{equation*}
		(A_Ku,v_\IL)_{K} = (f,v_\IL)_{K} = \ell_K(v_\IL),
		\qquad
		a_h(u,v_\IT) = \ell_h(v_\IT),
	\end{equation*}
	where the first identity uses $-\eps^2\Delta u+u=f$, and the last identity follows by
	elementwise integration by parts and  the boundary condition in~\eqref{eq:PDE}. The quasi-optimal 
	estimate then follows from \cite[Corollary 3.3]{LLSV_ARXIV_2024}.
	\end{proof}
	
	\subsection{A priori error estimates}\label{sec:apriori}
	In this section, we write $h_1:=\hKot$ and $h_2:=\hKtt$, for brevity.
  	Let $\widetilde \Pi_K^p$ denote the $L^2(\widetilde K)$-orthogonal projection onto $\IP^p_\otimes(\widetilde K)$.  
    We define the pullback projection $\Pi_K^p:L^2(K)\to \Vh(K)$ by
    \[
    \Pi_K^p v := \bigl(\widetilde\Pi_K^p(v\circ G_K)\bigr)\circ G_K^{-1}\qquad \forall v \in L^2(K).
    \]
	We recall the approximation properties from \cite[Lemmas 7.5, 7.7, and 7.9]{G06hp} in \cref{lem:proj_estimates}.
	The anisotropic bounds are stated in terms of norms of derivatives of $\widetilde v$ on $\Kt$ rather than norms of derivatives of $v$ on $K$, since this yields sharper bounds that capture the different behaviour of $v$ in the actual directions of anisotropy.
	
\newcommand{\jj}{\bar\imath}
	\begin{theorem}[Error estimate in the energy norm]
		\label{thm:best_approximation}
        Assume that the solution $u$ to~\eqref{eq:PDE} belongs to $H^{\ell+1}(\calK)\cap H^2(\Omega)$, for some $\ell\ge 1$, and that 
		$G_K$ is a $C^{\ell+1}$-diffeomorphism for every $K\in\calK$.
		Let $u_h\in\Vh$ be the embedded Trefftz DG solution to~\eqref{eq:PDEh}.
        Set $\jj=3-i$, for $i=1,2$.
		Then, for $0\leq s\leq \min\{p,\ell\}$, the following bound holds
\begin{equation*}
\|u-u_h\|_{\Vh}^2
\lesssim
\sum_{K\in\calK}\sum_{i=1}^2 h_i^{2s} \left[ h_i^2 + \eps^2 \left( 1+\left(\frac{h_i}{h_{\jj}}\right)^2 \right) \right] |\widetilde u|_{H^{s+1}(\Kt),i}^2 .
\end{equation*}
		 where $0\leq s\leq \min\{p,\ell\}$ and 
\begin{equation*}
|\widetilde v|_{H^{s+1}(\Kt),i}^2 := \|\partial_i^{s+1}\widetilde v\|_{\Kt}^2 + \left(\frac{h_{\jj}}{h_i}\right)^2 \|\partial_i^s\partial_{\jj}\widetilde v\|_{\Kt}^2 .
\end{equation*}
	\end{theorem}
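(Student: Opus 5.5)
We start from the quasi-optimality bound of \cref{thm:wp-qo}, which reduces the claim to a best-approximation estimate in $\Vh$. Choosing $v_h|_K := \Pi_K^p u$ elementwise gives
\begin{equation*}
\norm{u-u_h}_{\Vh}^2 \lesssim \|u-\Pi^p u\|_{\Vsh}^2 ,
\end{equation*}
where $\Pi^p$ denotes the elementwise projection. Writing $\eta:=u-\Pi^p u$, we expand $\|\eta\|_{\Vsh}^2$ using \eqref{eq:Vhnorm}. This produces four groups of terms: the volume terms $\|\eta\|_K^2+\eps^2\|\nabla \eta\|_K^2$, the jump terms in $|\eta|_{\mathrm J}^2$, and the normal-derivative facet terms $h_\perp^{e,\Kt}\|\eps\nabla\eta\cdot\bn\|_e^2$. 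Each group will be transferred to the pullback element $\Kt$ through \eqref{eq:GK}, up to constants depending on $C_G^\diamond$ and $C_G^*$. For the volume terms this gives $\|\eta\|_K\lesssim\|\tilde\eta\|_{\Kt}$ and $\|\nabla\eta\|_K\lesssim \|\partial_1\tilde\eta\|_{\Kt}+\|\partial_2\tilde\eta\|_{\Kt}$, with $\tilde\eta=\tilde u-\widetilde\Pi_K^p\tilde u$.

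For the jump terms we bound $\jmp{\eta}^2\lesssim (\eta^+)^2+(\eta^-)^2$ and use $1/(h_\perp^++h_\perp^-)\le 1/h_\perp^{\pm}$. This decouples them into elementwise contributions $\eps^2 (h_\perp^{e,\Kt})^{-1}\|\tilde\eta\|_{\tilde e}^2$; edges in $\calE^K_1$ carry the weight $h_2^{-1}$ and edges in $\calE^K_2$ the weight $h_1^{-1}$. The facet derivative terms similarly become $h_\perp^{e,\Kt}\eps^2\|\widetilde\nabla\tilde\eta\|_{\tilde e}^2$.

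All four contributions are then bounded with the anisotropic approximation results recalled in \cref{lem:proj_estimates}, from \cite[Lemmas 7.5, 7.7, 7.9]{G06hp}. These are the anisotropic $L^2$, $H^1$ and trace estimates for the tensor-product $L^2$ projection, in which the error in direction $i$ is governed by $h_i^{s+1}\|\partial_i^{s+1}\tilde u\|$ together with the mixed terms $h_i^s h_{\jj}\|\partial_i^s\partial_{\jj}\tilde u\|$. The expected bookkeeping is as follows.
\begin{itemize}
\item The $L^2$ term gives $\sum_i h_i^{2s+2}|\tilde u|^2_{H^{s+1},i}$.
\item The term $\eps^2\|\partial_i\tilde\eta\|^2$ gives $\eps^2 h_i^{2s}|\tilde u|^2_{H^{s+1},i}$ from direction $i$. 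The cross contribution, where $\partial_{\jj}$ acts on the direction-$i$ error, produces the factor $\eps^2 h_i^{2s}(h_i/h_{\jj})^2$ after renormalizing with the weight $(h_{\jj}/h_i)^2$ built into $|\cdot|_{H^{s+1},i}$.
\item For the trace terms on edges orthogonal to $\tilde x_i$ with weight $h_i^{\mp1}$, we use the one-dimensional trace inequality in $\tilde x_i$, namely $\|w\|^2_{\text{edge}}\lesssim h_i^{-1}\|w\|^2+h_i\|\partial_i w\|^2$, applied to $\tilde\eta$ and to $\widetilde\nabla\tilde\eta$. Combined with the projection estimates, this yields the same powers $\eps^2 h_i^{2s}(1+(h_i/h_{\jj})^2)$.
\end{itemize}
This last point relies on the matching of the weight $h_\perp^{e,\Kt}=h_i$ with the direction in which the trace is taken. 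That matching is exactly why the penalty uses the adjacent length.

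We expect the main obstacle to be the facet derivative term $h_\perp^{e,\Kt}\|\eps\nabla\eta\cdot\bn\|_e^2$ on curved elements. On curved elements the physical normal derivative mixes both $\partial_1\tilde\eta$ and $\partial_2\tilde\eta$, so a tangential derivative appears on the trace, and we need projection bounds for $\partial_{\jj}\tilde\eta$ on edges orthogonal to $\tilde x_i$ without losing powers of $h_i/h_{\jj}$. Our approach is to use the commuting property of the tensor-product projection, $\partial_{\jj}\widetilde\Pi^p_K=\widetilde\Pi^{p}_{K,i}\widetilde\Pi^{p-1}_{K,\jj}\partial_{\jj}$, where these denote one-dimensional projections, as in \cite{G06hp}. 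This reduces such terms to one-dimensional approximation of $\partial_{\jj}\tilde u$ in direction $i$, which produces precisely the mixed seminorm $h_i^{2s}\|\partial_i^s\partial_{\jj}\tilde u\|^2$. Here $C^{\ell+1}$ regularity of $G_K$ ensures that $\tilde u\in H^{\ell+1}(\Kt)$, with norms equivalent to those of $u$. Summing over $K$ then gives the stated estimate.
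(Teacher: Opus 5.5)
Your proposal follows essentially the same route as the paper: quasi-optimality from \cref{thm:wp-qo} with $v_h|_K=\Pi_K^p u$, then splitting $\|u-\Pi^p u\|_{\Vsh}^2$ into volume terms, jump terms (decoupled through $1/(h_\perp^{+}+h_\perp^{-})\le 1/h_\perp^{\pm}$) and weighted normal-derivative facet terms, bounding each by \cref{lem:proj_estimates}, and regrouping into the anisotropic seminorms $|\widetilde u|_{H^{s+1}(\Kt),i}$. One caveat concerns your optional derivation of the derivative-trace bound: the identity $\partial_{\bar\imath}\widetilde\Pi^p_K=\widetilde\Pi^{p}_{K,i}\widetilde\Pi^{p-1}_{K,\bar\imath}\partial_{\bar\imath}$ is false for the $L^2$-orthogonal projection used here, because only the one-dimensional projection acting in $\widetilde x_i$ commutes with $\partial_{\bar\imath}$, and it is the splitting $I-\pi_i\pi_{\bar\imath}=(I-\pi_i)+\pi_i(I-\pi_{\bar\imath})$ that actually produces the mixed term $h_i^{s}\|\partial_i^s\partial_{\bar\imath}\widetilde u\|$; this does not affect your proof, since \eqref{eq:proj_est_dtrace} already provides the needed bound.
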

	
	\begin{proof}
		The bound in Theorem~\ref{thm:wp-qo}, gives
		\begin{align*}
			\|u-u_h\|_{\Vh}^2&	\lesssim\sum_{K\in\calK}\eps^2\|\nabla (u-v_h) \|_{K}^2+\sum_{K\in\calK}\|u-v_h\|_{K}^2
			+\sum_{K\in\calK}\sum_{e\subset\partial K}
			\frac{\eps^2}{h_\perp^{e,\Kt}}\|u-v_h\|_{e}^2\\&
		   +\sum_{K\in\calK}\sum_{e\subset\partial K}
			h_\perp^{e,\Kt}\|\eps\nabla (u-v_h) \cdot\bn\|_{e}^2
            =:  I_1+I_2+I_3+I_4 \qquad \forall v_h\in \Vh.
		\end{align*}	
		Let $\vh\in \Vh$ be defined as $v_h|_K  = \Pi^p_K (u|_K)$ for all $K\in \calK$.
		By \eqref{eq:proj_est_H1} and \eqref{eq:proj_est_L2}, the first two terms satisfy
		\begin{align*}
			I_1
			&\lesssim
			\eps^2\sum_{K\in\calK}\sum_{i=1}^2
			\Big(
			h_i^{2s}\|\partial_i^{s+1}\widetilde u\|_{\Kt}^2
			+h_{\jj}^{2s}\|\partial_{\jj}^s\partial_i\widetilde u\|_{\Kt}^2
			\Big),
			\quad
			I_2
			\lesssim
			\sum_{K\in\calK}\sum_{i=1}^2
			h_i^{2s+2}\|\partial_i^{s+1}\widetilde u\|_{\Kt}^2.
		\end{align*}
	For $e\in\calE^i_K$, by definition
	$h_\perp^{e,\Kt}=h_{\jj}$.
	Using the trace estimate 
    in~\eqref{eq:proj_est_trace}, we obtain
    \begin{align*}
    I_3 &\lesssim \eps^2 \sum_{K\in\calK}\sum_{i=1}^2 \Bigg[ h_{\jj}^{2s} \|\partial_{\jj}^{s+1}\widetilde u\|_{\Kt}^2 + h_i^{2s} \left(\frac{h_i}{h_{\jj}}\right)^2 \|\partial_i^{s+1}\widetilde u\|_{\Kt}^2 + h_i^{2s} \|\partial_i^s\partial_{\jj}\widetilde u\|_{\Kt}^2 \Bigg]
    \\
    &\lesssim \eps^2 \sum_{K\in\calK}\sum_{i=1}^2 h_i^{2s} \Bigg[ \left( 1+\left(\frac{h_i}{h_{\jj}}\right)^2 \right) \|\partial_i^{s+1}\widetilde u\|_{\Kt}^2 + \|\partial_i^s\partial_{\jj}\widetilde u\|_{\Kt}^2
    \Bigg].
    \end{align*}
    The normal-derivative trace estimate \eqref{eq:proj_est_dtrace}, together with
    $|\nabla w\cdot\bn|\le |\partial_1 w|+|\partial_2 w|$, yield
    \begin{align*}
    I_4 &\lesssim \eps^2 \sum_{K\in\calK}\sum_{i=1}^2 h_{\jj} \Bigg[ h_i^{2s-1} \left( \frac{h_i}{h_{\jj}} \|\partial_i^{s+1}\widetilde u\|_{\Kt}^2 + \frac{h_{\jj}}{h_i} \|\partial_i^s\partial_{\jj}\widetilde u\|_{\Kt}^2 \right)
    \\
    &\qquad + h_{\jj}^{2s-1} \|\partial_{\jj}^{s}\partial_i\widetilde u\|_{\Kt}^2 + h_{\jj}^{2s-1} \|\partial_{\jj}^{s+1}\widetilde u\|_{\Kt}^2 + \frac{h_i^{2s}}{h_{\jj}} \|\partial_i^s\partial_{\jj}\widetilde u\|_{\Kt}^2 \Bigg]
    \\
    &\lesssim \eps^2 \sum_{K\in\calK}\sum_{i=1}^2 h_i^{2s} \Bigg[ \|\partial_i^{s+1}\widetilde u\|_{\Kt}^2 + \left( 1+\left(\frac{h_{\jj}}{h_i}\right)^2 \right) \|\partial_i^s\partial_{\jj}\widetilde u\|_{\Kt}^2 \Bigg].
    \end{align*}
Collecting all contributions and using the definition of the anisotropic seminorm gives the result.
\end{proof}

\section{Numerical experiments}\label{sect::numericalexperiments}
We present numerical experiments to validate the theoretical results and illustrate additional features and limitations of the proposed method.
All computations are performed using \texttt{NGSolve} \cite{ngsolve} and \texttt{NGSTrefftz} \cite{ngstrefftz}. 
Replication data are available in \cite{gomez_macias_2026_20322919}.
Implementational details of the embedded Trefftz DG method are given in~\cite{LS_IJMNE_2023}.
We briefly describe the construction of a basis for the special test space $\LKh$ defined in~\eqref{def:Lspace}, which can be easily implemented exploiting the tensor-product structure of the pullback elements. 
We define
\begin{equation*}
\psi_j(y):=y(1-y)\,L_j(y),\qquad j=0,\dots,p-2,
\end{equation*}
where $L_j$ denotes the standard Legendre polynomials shifted and scaled to  $[0,1]$.
Then $\{\psi_j\}_{j=0}^{p-2}$ spans the subspace of $\mathbb{P}^p([0,1])$ with homogeneous endpoint conditions. 
A basis for $\LKh$ is obtained by tensorization with a standard polynomial 
basis in the remaining variable.

We have also considered alternative choices for the test space $\LKh$, such as the image of the underlying space under the operator $A_K$, i.e., $\LKh = A_K(\Vh(K))$, and also $\LKh = \Delta(\Vh(K))$.
These alternative choices are possible, however, they exhibit numerical instabilities on fine meshes or high polynomial degrees.
Moreover, on anisotropic elements, redefining $\LKh$ by swapping the roles of $x$ and $y$, i.e.,\ vanishing on the shorter edges, leads to suboptimal results, confirming the importance of the specific choice we make for
the test space.
In all the following experiments, we compare the standard DG method using the space $\Vh$ to the embedded Trefftz DG method using $\ITh$.
The stabilization parameter $\sigma$ is set to $\sigma=10$ for all 
facets.
The numerical results are compared in terms of $h$-convergence on a square geometry 
in \cref{sec:RectangularElements}, and on curvilinear meshes 
for a circular domain in \cref{sec:CurvilinearElements}.  
\cref{sec:nineelements} investigates $hp$-convergence on a layer-adapted nine-element mesh, while \cref{sec:anisotropicdiffusion} tests the method on an anisotropic diffusion problem, which lies outside the theoretical framework.

\subsection{\texorpdfstring{$h$}{h}-convergence: square domain}\label{sec:RectangularElements}
We consider problem~\eqref{eq:PDE} on the square domain $\Omega=(-1,1)^2$, with diffusion parameter $\eps^2=10^{-5}$, 
homogeneous Dirichlet boundary conditions ($g= 0$), and source term $f$ such that the exact solution is
\begin{equation}\label{eq:solnineelemmesh}
	u(x,y)=\left(1-\frac{\mathrm{cosh}(x/\eps)}{\mathrm{cosh}(1/\eps)}\right)\left(1-\frac{\mathrm{cosh}(y/\eps)}{\mathrm{cosh}(1/\eps)}\right).
\end{equation}
A similar example was investigated in~\cite[\S 9.1]{G06hp} and, in one dimension, in~\cite[\S 6.1]{SS96bh}.
The solution exhibits boundary layers of thickness $\mathcal{O}(\eps)$ near $\partial \Omega$.
To resolve these layers, we employ anisotropic tensor-product meshes refined towards the boundary, obtained as the Cartesian product of two graded one-dimensional meshes. Following~\cite[eq.~(6.5)]{SS96bh}, the one-dimensional mesh is defined by $\{-1,x_1,\ldots,x_{2n},1\}$, where
\begin{equation*}
    x_i=-1-\delta
    \ln\Big(1-c\frac{i}{n+1}\Big), \quad 
    x_{n+i}=1+
    \delta
    \ln\Big(1-c\frac{n-i}{n+1}\Big), \quad i=1,\ldots,n,
\end{equation*}
with $c = 1-e^{-1/ \delta}$ and $\delta=\eps(p+0.5)3/\exp(1)$.

In \cref{fig:hconv}, we report results for increasing values of $n=1,2,\dots$ and polynomial degrees~$p=3,4,5$.
We plot the error, measured in the $\|\cdot\|_{\Vh}$-norm, against the square root of the total number of degrees of freedom. 
Both methods exhibit optimal convergence rates in terms of the  number of degrees of freedom.
Nonetheless, the embedded Trefftz method requires significantly fewer degrees of freedom to reach the same accuracy as the standard DG method.

 \begin{figure}[ht!]
		\centering
	    \resizebox{\textwidth}{!}{
	        \begin{tikzpicture}
					\begin{groupplot}[%
						group style={%
							group name={my plots},
							group size=3 by 1,
							horizontal sep=6em,
							vertical sep=5em,
						},
						legend style={
							legend columns=1,
							at={(0.98,0.98)},
						},
						ymajorgrids=true,
						grid style=dashed,
						cycle list name=colorsh,
	                    ymode=log, xmode=log
						]
	    	\CycleNextGruoupPloth{3}{dgeoc}
	    	\CycleNextGruoupPloth{4}{dgeoc}
	  	 	\CycleNextGruoupPloth{5}{dgeoc}
					\end{groupplot}   
			\end{tikzpicture}
	    }
	    \caption{$h$-convergence for the problem with exact solution $u$ in~\eqref{eq:solnineelemmesh}.
	       Comparison between the standard DG method ($\Vh$) and the embedded Trefftz DG method ($\ITh$) for polynomial degrees $p=3,4,5$.
	    }
	    \label{fig:hconv}
    \vspace{-.5em}
	\end{figure}
	
\subsection{\texorpdfstring{$h$}{h}-convergence: circular domain}\label{sec:CurvilinearElements}
We consider problem~\eqref{eq:PDE} on the unit disk with $\eps^2=10^{-5}$, homogeneous Dirichlet boundary conditions ($g=0$), and choose the source term $f$ such that the exact solution is
\begin{equation}\label{eq:circlesol}
	u(x,y)
	=
	1-\frac{\cosh\!\left(\sqrt{x^2+y^2}/\eps\right)}
	        {\cosh(1/\eps)} .
\end{equation}
This radially symmetric solution satisfies $u=0$ on $\partial\Omega$ and exhibits a boundary layer near~$\partial \Omega$. 

To resolve this layer, we use curved anisotropic meshes refined towards $\partial\Omega$.
The mesh consists of a fixed triangular mesh of the inner disk $B_{1/2}(0)$ and a fixed number of curved quadrilateral layers in the outer part $B_1(0)\setminus B_{1/2}(0)$.
We then refine by adding
layers of curved quadrilaterals towards the boundary, where the radial nodes added are given by the graded one-dimensional mesh used in the previous example $\{0,x_{n+1},\dots,x_{2n}\}$, with $n$ the number of refinement steps.

\begin{figure}[ht!]
\begin{minipage}{0.25\textwidth}
    \vspace{-1em}
  \includegraphics[width=\textwidth, trim={0cm 0cm 0cm 0cm}, clip]{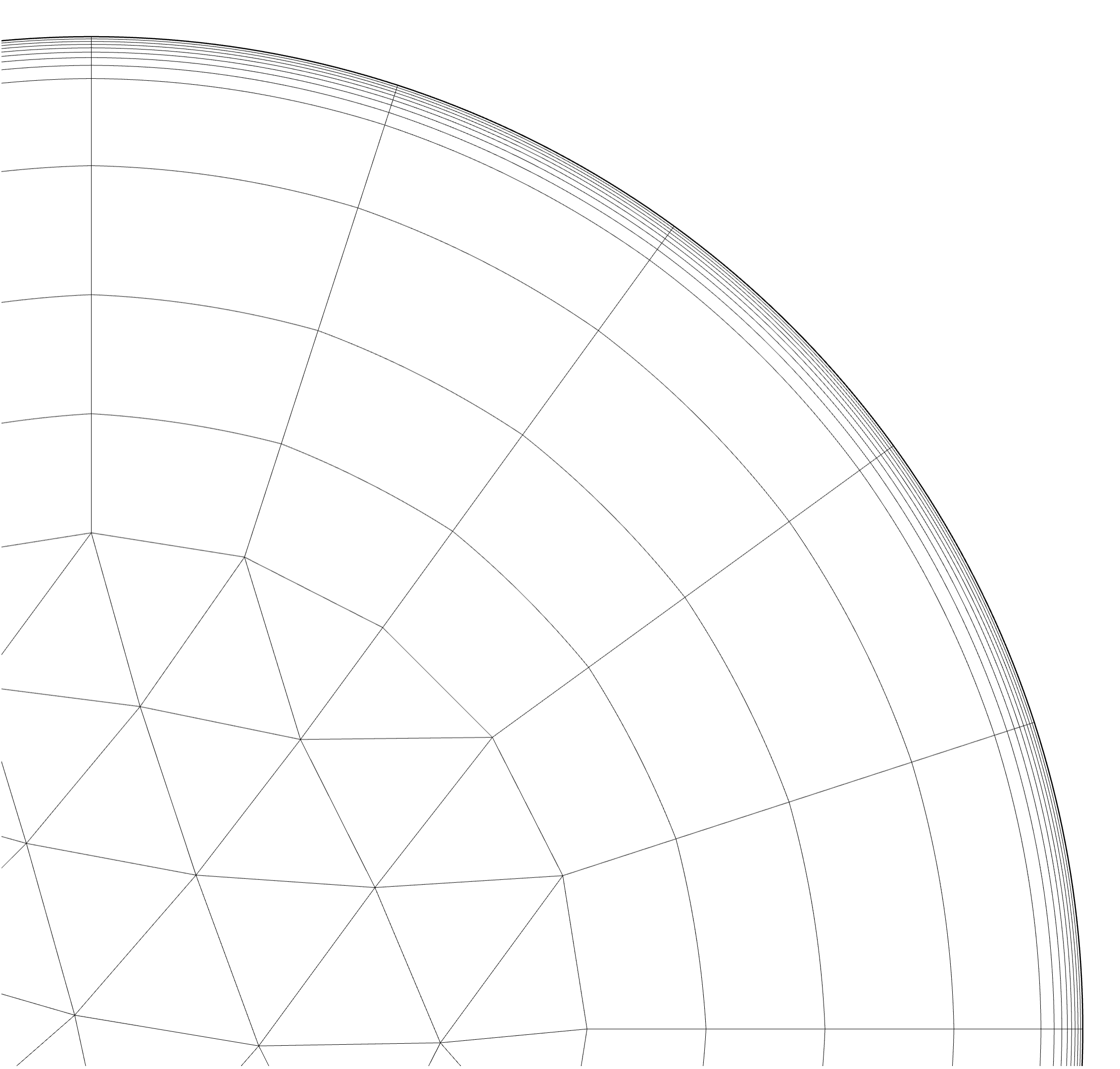}
    \\\vspace{1em}
  \includegraphics[width=\textwidth, trim={0cm 0cm 0cm 0cm}, clip]{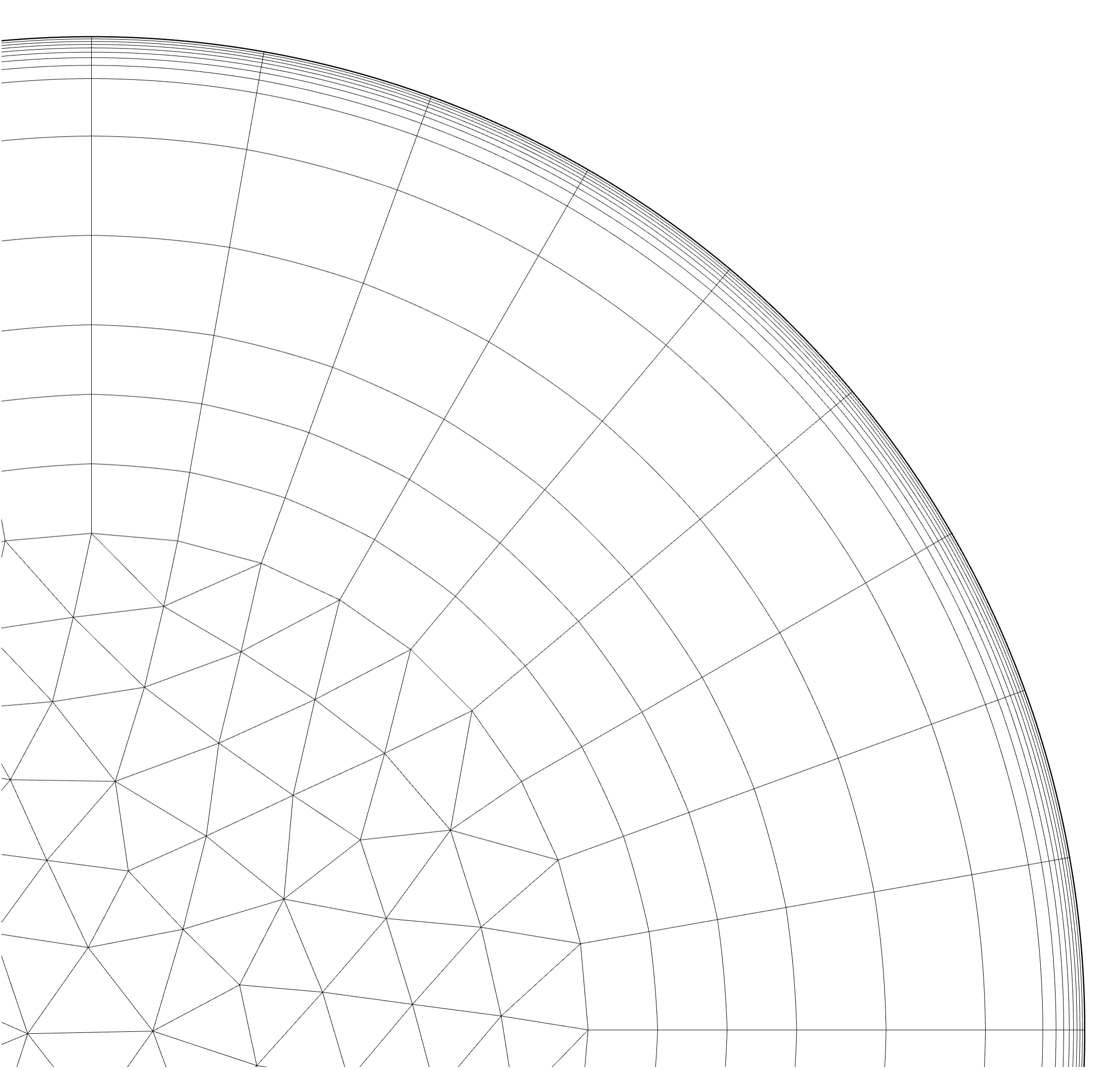}
\end{minipage}
\begin{minipage}{0.7\textwidth}
	\centering
    \resizebox{\textwidth}{!}{
        \begin{tikzpicture}
				\begin{groupplot}[%
					group style={%
						group name={my plots},
						group size=2 by 2,
						horizontal sep=6em,
						vertical sep=6em,
					},
					legend style={
						legend columns=1,
						at={(0.98,0.98)},
					},
					ymajorgrids=true,
					grid style=dashed,
					cycle list name=colorshp,
                    ymode=log, xmode=log
					]
    \nextgroupplot[ylabel={$\norm{u-u_h}_{\Vh}$},xlabel={$\mathrm{N}_{\mathrm{DoFs}}$},title={$p=4$}, title style={font=\large},
    label style={font=\large},
    tick label style={font=\large},
    legend style={font=\large}]
    \addplot+[discard if not={maxh}{0},discard if not={p}{4},discard if not={method}{dg}] table [x=ndof, y=dgerror, col sep=comma] {results/example2.csv};
    \addplot+[discard if not={maxh}{0},discard if not={p}{4},discard if not={method}{embt}] table [x=ndof, y=dgerror, col sep=comma] {results/example2.csv};
    \legend{$\Vh$,$\ITh$}
    \nextgroupplot[ylabel={$\norm{u-u_h}_{\Vh}$},xlabel={$\mathrm{N}_{\mathrm{DoFs}}$},title={$p=5$},title style={font=\large},
    label style={font=\large},
    tick label style={font=\large},
    legend style={font=\large}]
    \addplot+[discard if not={maxh}{0},discard if not={p}{5},discard if not={method}{dg}] table [x=ndof, y=dgerror, col sep=comma] {results/example2.csv};
    \addplot+[discard if not={maxh}{0},discard if not={p}{5},discard if not={method}{embt}] table [x=ndof, y=dgerror, col sep=comma] {results/example2.csv};
    \legend{$\Vh$, $\ITh$}
    \nextgroupplot[ylabel={$\norm{u-u_h}_{\Vh}$},xlabel={$\mathrm{N}_{\mathrm{DoFs}}$},title={$p=4$},title style={font=\large},
    label style={font=\large},
    tick label style={font=\large},
    legend style={font=\large}]
    \addplot+[discard if not={maxh}{1},discard if not={p}{4},discard if not={method}{dg}] table [x=ndof, y=dgerror, col sep=comma] {results/example2.csv};
    \addplot+[discard if not={maxh}{1},discard if not={p}{4},discard if not={method}{embt}] table [x=ndof, y=dgerror, col sep=comma] {results/example2.csv};
    \legend{$\Vh$,$\ITh$}
    \nextgroupplot[ylabel={$\norm{u-u_h}_{\Vh}$},xlabel={$\mathrm{N}_{\mathrm{DoFs}}$},title={$p=5$},   title style={font=\large},
    label style={font=\large},
    tick label style={font=\large},
    legend style={font=\large}]
    \addplot+[discard if not={maxh}{1},discard if not={p}{5},discard if not={method}{dg}] table [x=ndof, y=dgerror, col sep=comma] {results/example2.csv};
    \addplot+[discard if not={maxh}{1},discard if not={p}{5},discard if not={method}{embt}] table [x=ndof, y=dgerror, col sep=comma] {results/example2.csv};
    \legend{$\Vh$,$\ITh$}
				\end{groupplot}   
		\end{tikzpicture}
    }
\end{minipage}
    \caption{$h$-convergence for the problem with exact solution $u$ in~\eqref{eq:circlesol}.
        Comparison between the standard DG method ($\Vh$) and the embedded Trefftz method ($\ITh$) for polynomial degrees $p=4,5$ on two initial meshes with maximum element size $h\approx 0.15$ (top row) and $h\approx 0.09$ (bottom row).
		Refinement is performed anisotropically towards the boundary to resolve the boundary layers. The final mesh after eight refinement steps is shown on the left.
    }
    \label{fig:curved}
\vspace{-.5em}
\end{figure}

In the central region of the circle, we employ triangular elements solely to keep the mesh construction as simple as possible.
Although the theoretical analysis developed in this work does not concern itself with triangular elements, these elements are non-anisotropic and therefore fall within the framework of~\cite{LLSV_ARXIV_2024}, and are therefore easily covered by the same theoretical results.
On triangular elements, we use the standard Dubiner basis of total degree $p$, and the embedding is chosen as in~\cite{LLSV_ARXIV_2024}, i.e., we keep the local operator $A_K$ as is, but use the 
space of polynomials of degree $p-2$ as the test space.

\cref{fig:curved} shows two representative meshes obtained after eight refinement steps ($n=8$). On the right panels of the same figure, we report numerical results for polynomial degrees $p=4,5$ on successively refined meshes with $n=1,\dots,8.$
We plot the error in the energy norm against the total number of degrees of freedom.
Both methods exhibit similar behaviour: starting from a coarse initial triangular mesh, both show error stagnation due to insufficient radial resolution of the boundary layer, an effect not observed for sufficiently fine initial triangulations.
In all cases, the embedded Trefftz DG method outperforms the standard DG method,
achieving comparable accuracy with fewer
of degrees of freedom.

\subsection{\texorpdfstring{$hp$}{hp}-convergence}\label{sec:nineelements}
We revisit the problem on the square domain from \cref{sec:RectangularElements}, but now we investigate $hp$-convergence on a mesh with only nine elements.
We study the validity of the method for increasing $p$, despite the fact that the dependence on $p$ is not treated by our analysis explicitly.
Following \cite[Example 9.1]{G06hp}, we generate the mesh by partitioning the domain $\Omega=(-1,1)^2$ into a $3\times 3$ grid, see \cref{fig:9elem}.
To resolve the boundary layers, we perform $p$-refinements while simultaneously adapting the size of the small elements adjacent to the boundary.
The length of the small element edges is taken as $\ell = \lambda p \eps$, i.e., 
the increment of the polynomial degree $p$ is accompanied by a proportional increase in the size of the small elements.
We consider $\lambda=0.9$, which yields good convergence properties as reported in 
\cite{G06hp}, and $\lambda=0.54$, as suggested in
\cite{SS96bh}.

\begin{figure}[ht!]
\begin{center}    
\begin{minipage}{0.32\textwidth}
\vspace{-1em}
    \resizebox{\textwidth}{!}{
    \begin{tikzpicture}[scale=2.5, thick]
      \def\lam{0.2}
      \def\a{-1}
      \def\b{1}
      \draw[black] (\a,\a) rectangle (\b,\b);
      \foreach \x in {\a+\lam, \b-\lam} {
        \draw[black] (\x,\a) -- (\x,\b);
      }
      \foreach \y in {\a+\lam, \b-\lam} {
        \draw[black] (\a,\y) -- (\b,\y);
      }
      \node[below left] at (\a,\a) {$(-1,-1)$};
      \node[below right] at (\b,\a) {$(1,-1)$};
      \node[above left] at (\a,\b) {$(-1,1)$};
      \node[above right] at (\b,\b) {$(1,1)$};
      \draw[decorate,decoration={brace,mirror,raise=2pt}] 
        (\a,\a-0.05) -- (\a+\lam,\a-0.05)
        node[midway,below=4pt] {$\ell$};
      \draw[decorate,decoration={brace,raise=2pt}] 
        (-1-0.05,\a) -- (-1-0.05,\a+\lam)
        node[midway,left=4pt] {$\ell$};
    \end{tikzpicture}
}
\vspace{1em}
\resizebox{\textwidth}{!}{
    \begin{tikzpicture}[scale=2.5, thick]
      \def\lam{0.2}
      \def\a{-1}
      \def\b{1}
      \draw[black] (\a,\a) rectangle (\b,\b);
      \foreach \x in {\a+\lam, \b-\lam} {
        \draw[black] (\x,\a) -- (\x,\b);
      }
      \foreach \y in {\a+\lam, \b-\lam} {
        \draw[black] (\a,\y) -- (\b,\y);
      }
      \node[below left] at (\a,\a) {$(-1,-1)$};
      \node[below right] at (\b,\a) {$(1,-1)$};
      \node[above left] at (\a,\b) {$(-1,1)$};
      \node[above right] at (\b,\b) {$(1,1)$};
    \node at ({\a+0.5*\lam},{\a+0.5*\lam}) {$\IT_0$};
    \node at ({0},{\a+0.5*\lam}) {$\ITh$};
    \node at ({\b-0.5*\lam},{\a+0.5*\lam}) {$\IT_0$};
    \node at ({\a+0.5*\lam},0) {$\ITh$};
    \node at (0,0) {$\ITh$};
    \node at ({\b-0.5*\lam},0) {$\ITh$};
    \node at ({\a+0.5*\lam},{\b-0.5*\lam}) {$\IT_0$};
    \node at (0,{\b-0.5*\lam}) {$\ITh$};
    \node at ({\b-0.5*\lam},{\b-0.5*\lam}) {$\IT_0$};
    \end{tikzpicture}
}
\end{minipage}
\begin{minipage}{0.66\textwidth}
    \resizebox{\textwidth}{!}{
			\begin{tikzpicture}
				\begin{groupplot}[%
					group style={%
						group name={my plots},
						group size=2 by 2,
						horizontal sep=6em,
						vertical sep=5em,
					},
					legend style={
						legend columns=1,
                        at={(0.98,0.98)},
					},
					ymajorgrids=true,
					grid style=dashed,
					cycle list name=colorshp,
					]
    \nextgroupplot[ymode=log, ylabel={$\norm{u-u_h}_{\Vh}$},xlabel={$p$},legend pos=south west, title style={font=\large},
    label style={font=\large},
    tick label style={font=\large},
    legend style={font=\large}]
    \addplot+[discard if not={hnr}{1},discard if not={sigma}{0.54},discard if not={method}{dg}] table [x=p, y=dgerror, col sep=comma] {results/example3.csv};
    \addplot+[discard if not={hnr}{1},discard if not={sigma}{0.54},discard if not={method}{embt}] table [x=p, y=dgerror, col sep=comma] {results/example3.csv};
    \addplot+[dashed,discard if not={hnr}{1},discard if not={sigma}{0.9},discard if not={method}{dg}] table [x=p, y=dgerror, col sep=comma] {results/example3.csv};
    \addplot+[dashed,discard if not={hnr}{1},discard if not={sigma}{0.9},discard if not={method}{embt}] table [x=p, y=dgerror, col sep=comma] {results/example3.csv};          \legend{{$\Vh, \lambda=0.54$}, {$\ITh, \lambda=0.54$}, {$\Vh, \lambda=0.9$}, {$\ITh, \lambda=0.9$}}
    \nextgroupplot[ymode=log, ylabel={$\norm{u-u_h}_{\Vh}$},xlabel={$\sqrt{\mathrm{N}_{\mathrm{DoFs}}}$},xmin=4,xmax=69,  title style={font=\large},
    label style={font=\large},
    tick label style={font=\large},
    legend style={font=\large}]
    \addplot+[discard if not={hnr}{1},discard if not={sigma}{0.54},discard if not={method}{dg}] table [x=ndof12, y=dgerror, col sep=comma] {results/example3.csv};
    \addplot+[discard if not={hnr}{1},discard if not={sigma}{0.54},discard if not={method}{embt}] table [x=ndof12, y=dgerror, col sep=comma] {results/example3.csv};
    \addplot+[dashed,discard if not={hnr}{1},discard if not={sigma}{0.9},discard if not={method}{dg}] table [x=ndof12, y=dgerror, col sep=comma] {results/example3.csv};
    \addplot+[dashed,discard if not={hnr}{1},discard if not={sigma}{0.9},discard if not={method}{embt}] table [x=ndof12, y=dgerror, col sep=comma] {results/example3.csv};            \legend{{$\Vh, \lambda=0.54$}, {$\ITh, \lambda=0.54$}, {$\Vh, \lambda=0.9$}, {$\ITh, \lambda=0.9$}}
    \nextgroupplot[ymode=log, ylabel={$\norm{u-u_h}_{\Vh}$},xlabel={$p$},legend pos=south west,  title style={font=\large},
    label style={font=\large},
    tick label style={font=\large},
    legend style={font=\large}]
    \addplot+[discard if not={hnr}{2},discard if not={sigma}{0.54},discard if not={method}{dg}] table [x=p, y=dgerror, col sep=comma] {results/example3.csv};
    \addplot+[discard if not={hnr}{2},discard if not={sigma}{0.54},discard if not={method}{embt}] table [x=p, y=dgerror, col sep=comma] {results/example3.csv};
    \addplot+[dashed,discard if not={hnr}{2},discard if not={sigma}{0.9},discard if not={method}{dg}] table [x=p, y=dgerror, col sep=comma] {results/example3.csv};
    \addplot+[dashed,discard if not={hnr}{2},discard if not={sigma}{0.9},discard if not={method}{embt}] table [x=p, y=dgerror, col sep=comma] {results/example3.csv};          \legend{{$\Vh, \lambda=0.54$}, {$\ITh, \lambda=0.54$}, {$\Vh, \lambda=0.9$}, {$\ITh, \lambda=0.9$}}
    \nextgroupplot[ymode=log, ylabel={$\norm{u-u_h}_{\Vh}$},xlabel={$\sqrt{\mathrm{N}_{\mathrm{DoFs}}}$},xmin=4,xmax=69,  title style={font=\large},
    label style={font=\large},
    tick label style={font=\large},
    legend style={font=\large}]
    \addplot+[discard if not={hnr}{2},discard if not={sigma}{0.54},discard if not={method}{dg}] table [x=ndof12, y=dgerror, col sep=comma] {results/example3.csv};
    \addplot+[discard if not={hnr}{2},discard if not={sigma}{0.54},discard if not={method}{embt}] table [x=ndof12, y=dgerror, col sep=comma] {results/example3.csv};
    \addplot+[dashed,discard if not={hnr}{2},discard if not={sigma}{0.9},discard if not={method}{dg}] table [x=ndof12, y=dgerror, col sep=comma] {results/example3.csv};
    \addplot+[dashed,discard if not={hnr}{2},discard if not={sigma}{0.9},discard if not={method}{embt}] table [x=ndof12, y=dgerror, col sep=comma] {results/example3.csv};
    \legend{{$\Vh, \lambda=0.54$}, {$\ITh, \lambda=0.54$}, {$\Vh, \lambda=0.9$}, {$\ITh, \lambda=0.9$}}
				\end{groupplot}   
		\end{tikzpicture}
    }
    \end{minipage}
	\end{center}   
\vspace{-1.5em}
    \caption{$hp$-convergence for the problem with exact solution $u$ in~\eqref{eq:solnineelemmesh} on a nine-element mesh with adapting boundary layer size.
       Comparison between the standard DG method ($\Vh$) and the embedded Trefftz method ($\ITh$) for two different values of the small element size parameter $\lambda=0.54$ (solid lines) and $\lambda=0.9$ (dashed lines).
        On the left, we show the mesh used. In the top row, the Trefftz space used in the experiments is the one 
        described in the main text. 
        On the bottom row, the mixed choice of embedded Trefftz spaces is indicated in the mesh.
    }
    \label{fig:9elem}
\vspace{-.5em}
\end{figure}

In \cref{fig:9elem} on the top row, we report the convergence results for both the methods, plotting the error in the $\|\cdot\|_{\Vh}$-norm against both, the polynomial degree $p$, and the square root of the number of degrees of freedom.
The standard DG method exhibits the expected exponential convergence in $p$ for both values of $\lambda$, with the case $\lambda=0.9$ showing a faster convergence rate due to the larger size of the small elements, which better capture the boundary layers.
In contrast, the embedded Trefftz method shows the same optimal rate for $\lambda=0.54$, but becomes unstable for $p \gtrsim 5$ for $\lambda=0.9$.
This is consistent with the fact that the present theoretical analysis is not $p$ robust, and does not guarantee stability for high polynomial degrees.

We have tested, and also
observed in the other experiments, that a smaller boundary layer or additional refinement 
regain stability, as
occurs for $\lambda=0.54$
in the example 
of \cref{sec:RectangularElements}.
The larger boundary layer seems to lead to an overfitting effect, as illustrated in \cref{fig:9elemsol} for $p=10$, where oscillations appear in the small elements located at the domain corners.
Motivated by this observation, we present another way to address this limitation, by modifying the discrete space in the corner elements by relaxing the Trefftz condition, allowing for a larger number of basis functions.
More precisely, in the corner elements, instead of testing with the tensor-product polynomials that vanish on two parallel edges of the element, we now test with tensor-product polynomials that vanish on all edges.
The new space is given by
\begin{equation*}
    \IT_0(K) := \{ v_h\in \Vh(K) \mid \inner{A_K v_h,q_h}_{K} = 0 \quad \forall q_h\in \Vh(K) \text{ with } q_h|_{\partial K}=0 \}.
\end{equation*}
This leads to a larger Trefftz space; 
since the constraint is not enforced on the edges, one has $\dim(\IT_0(K)) = 4p$.

The corresponding results are reported in the bottom row of \cref{fig:9elem}, using the combination of Trefftz spaces shown on the lower left panel of \cref{fig:9elemsol}.
With this modification, the embedded Trefftz method becomes stable and matches the exponential convergence in $p$ of the DG method.
Finally, we observe that this version of the embedded method yields comparable accuracy with a reduced number of degrees of freedom with respect to the standard DG method. 
We stress  that the additional cost introduced by the modified corner spaces is mild, as the overall number of DoFs still behaves linearly in $p$.
\begin{figure}[ht!]
  \centering
    \vspace{-.5em}
    \begin{tikzpicture}
        \node[anchor=south west, inner sep=0] (main) 
          {\includegraphics[width=0.4\textwidth, trim={3cm 0cm 3cm 2cm}, clip]{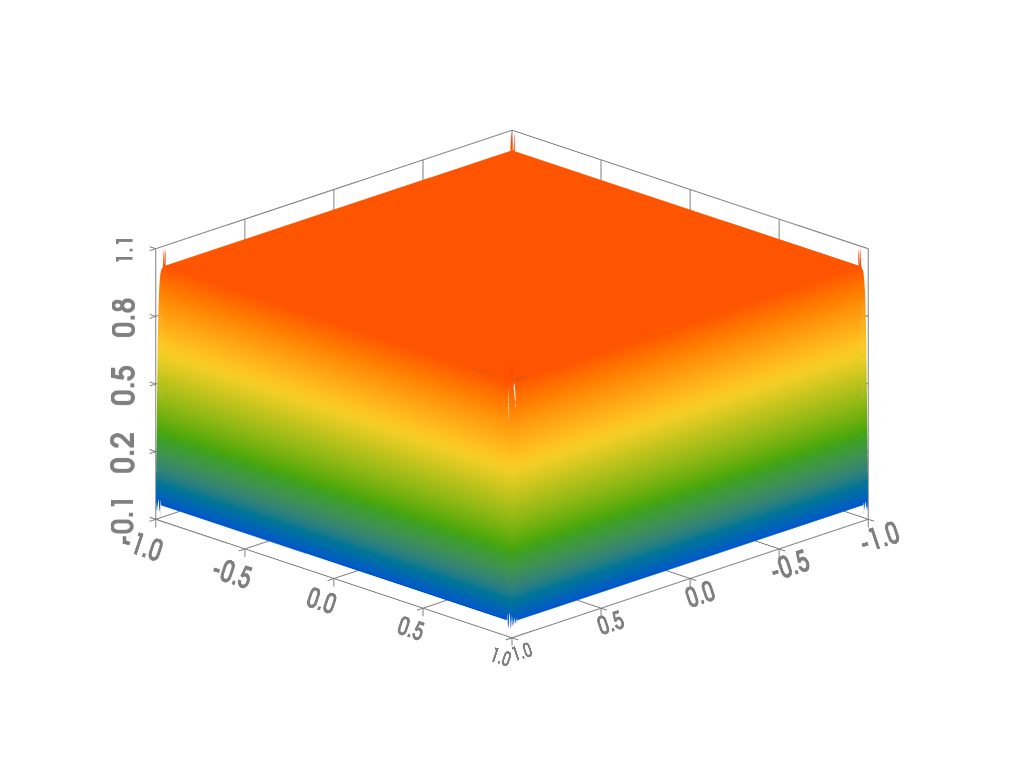}};
            \begin{scope}[x={(main.south east)},y={(main.north west)}]
                \coordinate (C) at (0.1, 0.70);
                \coordinate (T) at (-0.05, 0.71);
                \draw[black, line width=1pt] (C) circle [radius=0.04];
                \draw[black, line width=1pt] ($(C)!0.24!(T)$) -- (T);
            \end{scope}
        \node[anchor=south west, xshift=-1.6cm, yshift=3.4cm] (zoom)
          {\includegraphics[width=1.4cm, trim={17.5cm 12.9cm 15.2cm 11.5cm}, clip]{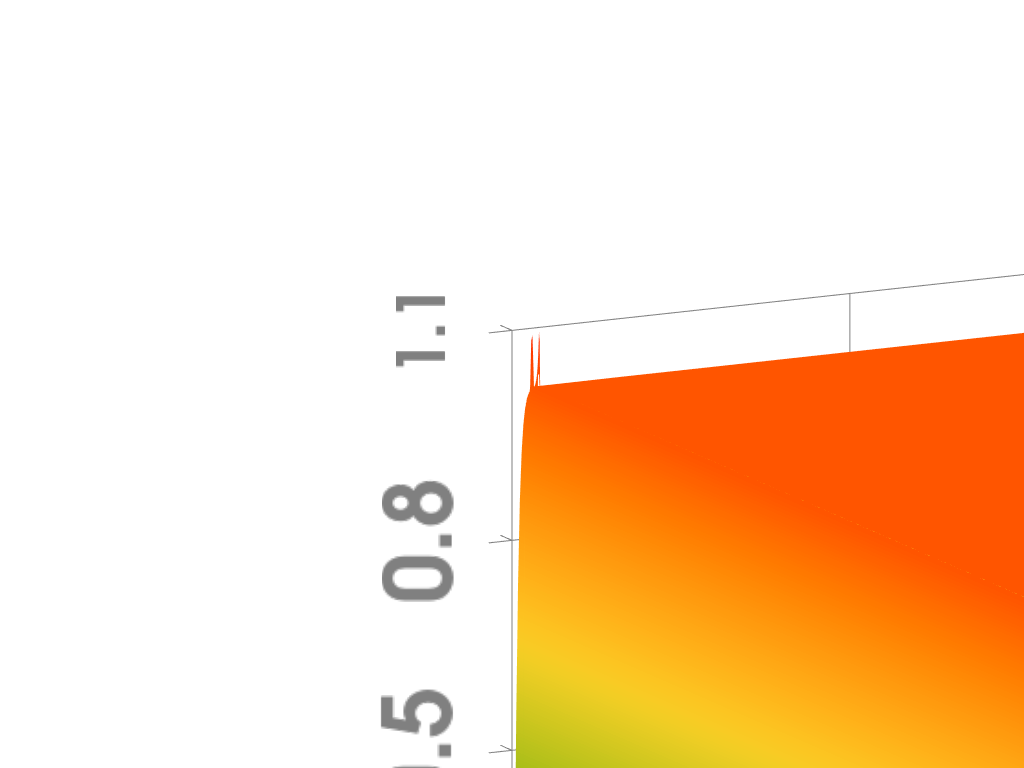}};
    \end{tikzpicture}
      \includegraphics[width=0.4\textwidth, trim={3cm 0cm 3cm 2cm}, clip]{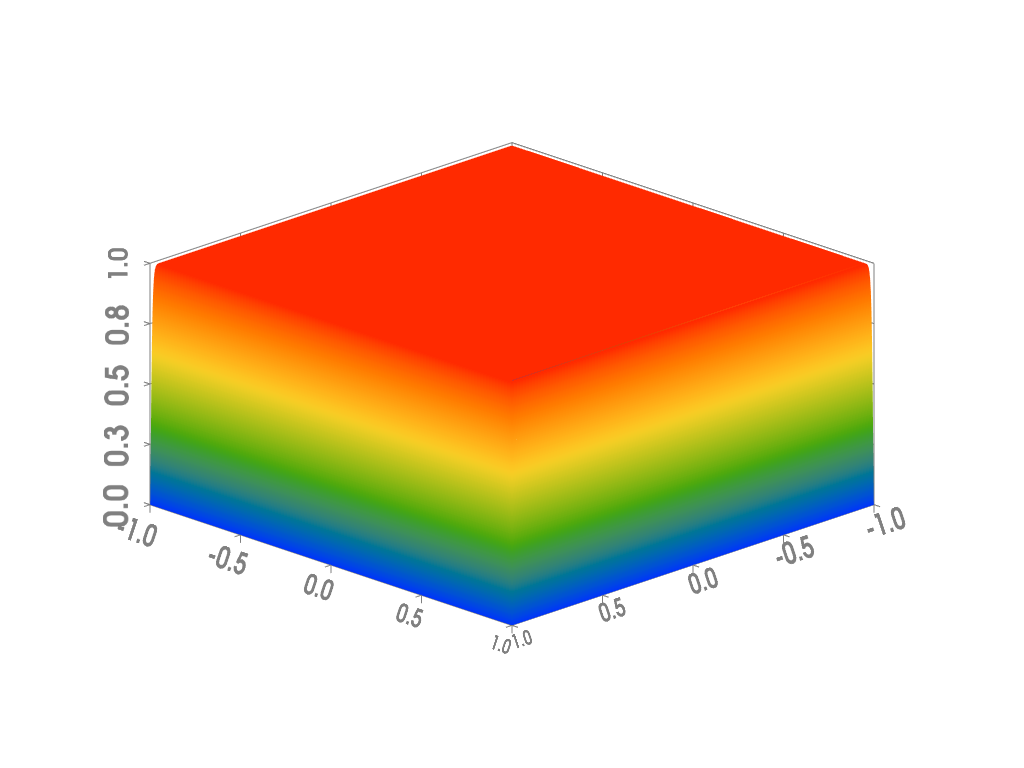}
    \vspace{-2.5em}
      \caption{Numerical solution using the embedded Trefftz DG method with $p=10$ and $
      	\lambda=0.9$ on the the nine-element mesh.
          On the left, we show the solution obtained using the same embedded Trefftz space in all elements, while on the right, we show the solution using the mixed choice of embedded Trefftz spaces.
      }
      \label{fig:9elemsol}
\vspace{-.5em}
\end{figure}

\subsection{Anisotropic diffusion problem}\label{sec:anisotropicdiffusion}
As a final example, we present an anisotropic diffusion problem  without a reaction term.  
The goal is to assess the performance of the proposed method
for problems beyond the theoretical analysis presented in this paper.
We consider the following boundary value problem:
\begin{eqs}\label{eq:diffmatrix}
	-u_{xx}- \eps^2 u_{yy} &= f\quad &&\text{ in } \Omega=(0,1)^2, \\
	u &= g \quad &&\text{ on } \partial\Omega,
\end{eqs}
	where $f=-\left(x^2+\frac{y^2}{\eps^2}\right)^{-\frac12}$, and $g$ is chosen such that the exact solution is
\begin{equation}\label{eq:solmatrixdiff}
	u(x,y)=\left(x^2+\frac{y^2}{\eps^2}\right)^\frac12.
\end{equation}
The diffusion parameter is set to $\eps^2=0.1$. 
Due to the singularities of $u_{xx}$ and $u_{yy}$ at the origin, the solution belongs only to $H^{5/2-\delta}(\Omega)$, for all $\delta>0$.
To resolve the singularity, we employ an $hp$-refinement technique similar to the one presented in \cite[\S 4.3.5]{G03dm}, for the same model problem.

\begin{figure}[ht!]
\centering
\begin{minipage}{0.45\textwidth}
  \includegraphics[width=\textwidth, trim={3cm 4cm 3cm 4cm}, clip]{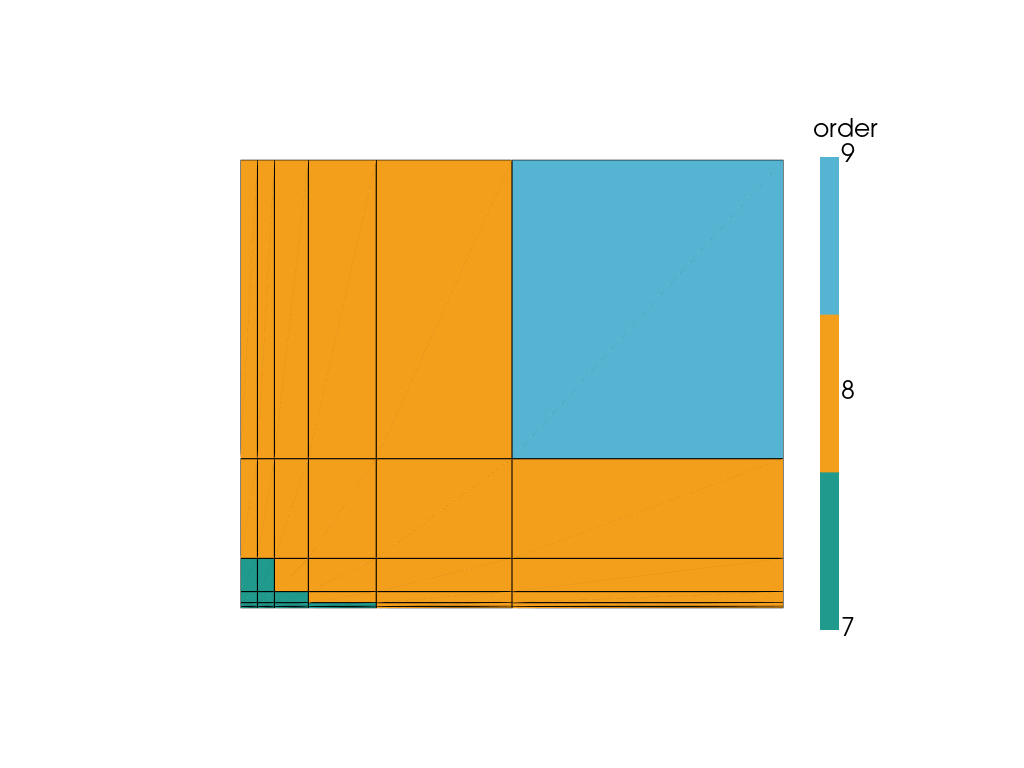}
\end{minipage}
\begin{minipage}{0.36\textwidth}
   \resizebox{\textwidth}{!}{
	\begin{tikzpicture}
		\begin{groupplot}[%
			group style={%
				group name={my plots},
				group size=2 by 2,
				horizontal sep=6em,
				vertical sep=5em,
			},
			legend style={
				legend columns=1,
				at={(0.98,0.98)},
			},
			ymajorgrids=true,
			grid style=dashed,
			cycle list name=colorshp,
			]
			\nextgroupplot[ymode=log, ylabel={$\norm{u-u_h}_{L^2(\Omega)}$},xlabel={$\sqrt{\mathrm{N}_{\mathrm{DoFs}}}$}]
			\addplot+[discard if not={method}{dg}] table [x=ndof12, y=l2error, col sep=comma] {results/example4hp.csv};
			\addplot+[discard if not={method}{embt}] table [x=ndof12, y=l2error, col sep=comma] {results/example4hp.csv};
			\addplot+[dashed,discard if not={method}{dg}] table [x=ndof12, y=l2error, col sep=comma] {results/example4h.csv};
			\addplot+[dashed,discard if not={method}{embt}] table [x=ndof12, y=l2error, col sep=comma] {results/example4h.csv};
			\legend{$\Vh$,$\ITh$,$\Vh$,$\ITh$}
		\end{groupplot}   
	\end{tikzpicture}
 }
\end{minipage}
\caption{
    $hp$-refinement for the problem \eqref{eq:diffmatrix} with exact solution $u$ in \eqref{eq:solmatrixdiff}.
    Left: finest mesh in the $hp$-refinement strategy, with geometric refinement towards the origin.
    Right: $L^2(\Omega)$-error versus square root of the number of degrees of freedom.
    Solid lines correspond to the $hp$-refinement strategy, while dashed lines correspond to $p$-refinement on the fixed finest mesh.
    }
\label{fig:diffmatrix}
\vspace{-.5em}
\end{figure}

We consider a sequence of three geometrically refined meshes towards the origin.
The grading parameters along the $x$- and $y$-axis are chosen as $
\mu_x=1/2$ and $
\mu_y=1/3$, respectively. 
The finest mesh in this hierarchy is shown on the left in \cref{fig:diffmatrix}.
We compare this strategy with a pure $p$-refinement on a fixed mesh, using polynomial degrees $p = 1,\ldots,8$.
The fixed mesh is the finest of the three meshes from the $hp$-strategy, namely the one in \cref{fig:diffmatrix}.

The results, reported on the right of~\cref{fig:diffmatrix}, show the $L^2(\Omega)$-error against the square root of the number of degrees of freedom.
The $hp$-refinement strategy outperforms the $p$-refinement on the fixed mesh for both methods.
Moreover, the embedded Trefftz method consistently yields lower errors than the standard DG method when each uses its respective refinement strategy.

\section*{Acknowledgments}
This research was funded in part by the Austrian Science Fund (FWF) 
\href{https://doi.org/10.55776/ESP4389824}{10.55776/ESP4389824}.
For open access purposes, the authors have applied a CC BY public copyright license to any author-accepted manuscript version arising from this submission. SG and CP are members of the Gruppo Nazionale Calcolo
Scientifico-Istituto Nazionale di Alta Matematica (GNCS-INdAM).
CP acknowledges support from the PRIN project ``ASTICE'' (202292JW3F) funded by the European Union -- NextGenerationEU.

\sloppy
 \bibliographystyle{abbrv}
\bibliography{bib.bib}

\appendix
\section{Appendix}

	The following lemma allows us to verify the assumption~\eqref{eq:A_coerc} via a perturbation argument.
	\begin{lemma}[{\cite[Lemma 4.2]{LLSV_ARXIV_2024}}]
		\label{lem:abstractneumann}
		Let $K\in\calK$ and let $A_{K,0}:\LKh \to \LKh'$ be an invertible operator with 
		\begin{equation*}
			\norm{A_{K,0}  u_h }_{\LKh'} \geq \cAP \norm{u_h}_{{\LKh}}
			\quad \forall u_h \in \LKh,
		\end{equation*}
        for some constant~$c_{A_0} > 0$. 
        If there exists a constant $\gamma_K \in (0,1)$ such that
		\begin{equation}\label{eq:prototype}
			\norm{\AK  u _h- A_{K,0}u_h}_{\LKh'}\le\gamma_K\norm{A_{K,0}u_h}_{\LKh'} \quad \forall u_h\in \LKh,
		\end{equation}
		then $\AK : \LKh \to \LKh'$ is invertible, with
		\begin{equation*}
			\norm{A_{K}  u_h }_{\LKh'} \geq \cAP(1-\gamma_K)\norm{u_h}_{{\LKh}}
			\quad \forall u_h \in \LKh.
		\end{equation*}
 	\end{lemma}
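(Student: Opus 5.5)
The plan is the standard Neumann-series perturbation argument: compare $\AK$ with $A_{K,0}$ through a lower bound obtained from the reverse triangle inequality, then deduce invertibility from injectivity and a dimension count.

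\textbf{Step 1 (lower bound).} Fix $u_h\in\LKh$. Since $\AK u_h = A_{K,0}u_h + (\AK u_h - A_{K,0}u_h)$, the reverse triangle inequality in $\LKh'$ gives
\begin{equation*}
\norm{\AK u_h}_{\LKh'} \ge \norm{A_{K,0}u_h}_{\LKh'} - \norm{\AK u_h - A_{K,0}u_h}_{\LKh'}.
\end{equation*}
Inserting assumption \eqref{eq:prototype} bounds the right-hand side from below by $(1-\gamma_K)\norm{A_{K,0}u_h}_{\LKh'}$. This factor is nonnegative because $\gamma_K<1$. Applying the assumed stability of $A_{K,0}$ then yields
\begin{equation*}
\norm{\AK u_h}_{\LKh'}\ge \cAP(1-\gamma_K)\norm{u_h}_{\LKh},
\end{equation*}
which is the claimed estimate.

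\textbf{Step 2 (invertibility).} The bound from Step 1, with the strictly positive constant $\cAP(1-\gamma_K)$, shows that $\AK:\LKh\to\LKh'$ is injective.

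\begin{itemize}
\item In the present setting $\LKh$ is finite dimensional, so $\dim \LKh' = \dim\LKh$. An injective linear map between spaces of equal finite dimension is surjective, hence bijective.
\item The same bound shows that $\norm{\AK^{-1}}\le (\cAP(1-\gamma_K))^{-1}$.
\end{itemize}

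If one wanted a dimension-free version, one would instead write $\AK = A_{K,0}\bigl(I + A_{K,0}^{-1}(\AK - A_{K,0})\bigr)$. Assumption \eqref{eq:prototype} gives $\norm{(\AK-A_{K,0})A_{K,0}^{-1}}\le\gamma_K<1$ in $\LKh'$, so $I+(\AK-A_{K,0})A_{K,0}^{-1}$ is invertible by a Neumann series. Composing with $A_{K,0}$ then gives invertibility of $\AK$.

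\textbf{Main obstacle.} There is little real difficulty. The only point to watch is that \eqref{eq:prototype} is measured relative to $\norm{A_{K,0}u_h}$ rather than $\norm{u_h}$. This is exactly what makes the constant come out as $\cAP(1-\gamma_K)$ rather than $\cAP-\gamma_K$.
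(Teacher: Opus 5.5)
Your proof is correct and follows the standard route: the reverse triangle inequality combined with \eqref{eq:prototype} and the stability of $A_{K,0}$ gives the lower bound, and injectivity plus $\dim\LKh<\infty$ gives invertibility (the paper has no proof of its own and defers to \cite[Lemma 4.2]{LLSV_ARXIV_2024}). One slip in your optional dimension-free aside: the bound $\norm{(\AK-A_{K,0})A_{K,0}^{-1}}\le\gamma_K$ belongs with the factorization $\AK=\bigl(I+(\AK-A_{K,0})A_{K,0}^{-1}\bigr)A_{K,0}$, not with $A_{K,0}\bigl(I+A_{K,0}^{-1}(\AK-A_{K,0})\bigr)$ as you wrote, though this does not affect your main argument.
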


	\begin{lemma}[Estimates for $\Pi_K^p$]\label{lem:proj_estimates}
		For any $K \in \calK$, let $v_{|_K}\in H^{\ell+1}(K)$, for some $\ell\ge 1$, and let $G_K$ be a $C^{\ell+1}$-diffeomorphism. Then, for $\widetilde v= v\circ G_K$, 
        and setting $\jj=3-i$, for $i=1,2$,
        we have  
		\begin{align}
			\|v-\Pi_K^p v \|_{K}
			&\lesssim
			\sum_{i=1}^2 h_i^{s+1}\,
			\|\partial_i^{s+1}\widetilde v\|_{\Kt},
			\label{eq:proj_est_L2}
			\\
			\|\partial_m (v-\Pi_K^p v) \|_{K}
			&\lesssim
			\sum_{i=1}^2 h_i^{s}\,
			\|\partial_i^{s+1}\widetilde v\|_{\Kt}
			+
			\sum_{i=1}^2 h_{\jj}^{s}\,
			\|\partial_{\jj}^{s}\partial_i\widetilde v\|_{\Kt},
			\qquad m=1,2,
			\label{eq:proj_est_H1}
			\\
			\|v-\Pi_K^p v\|_{\calE_i^K}
			&\lesssim
			h_{\jj}^{s+\frac12}\,
			\|\partial_{\jj}^{s+1}\widetilde v\|_{\Kt}
			+
			\Big(\frac{h_i}{h_{\jj}}\Big)^{\frac12}
			h_i^{s+
			\frac12}\,
			\|\partial_i^{s+1}\widetilde v\|_{\Kt}
			\notag\\
			&\quad
			+
			h_{\jj}^{\frac12}h_i^s\,
			\|\partial_i^s\partial_{\jj}\widetilde v\|_{\Kt},
			\qquad i=1,2,
			\label{eq:proj_est_trace}
			\\
			\|\partial_m(v-\Pi_K^p v) \|_{\calE_i^K}
			&\lesssim
			h_i^{s-\frac12}
			\Bigg[
			\Big(\frac{h_i}{h_{\jj}}\Big)^{\frac12}
			\|\partial_i^{s+1}\widetilde v\|_{\Kt}
			+
			\Big(\frac{h_{\jj}}{h_i}\Big)^{\frac12}
			\|\partial_i^s\partial_{\jj}\widetilde v\|_{\Kt}
			\Bigg]
			\notag\\
			&\quad
			+
			h_{\jj}^{s-\frac12}\,
			\|\partial_{\jj}^{s}\partial_i\widetilde v\|_{\Kt}
			+
			h_{\jj}^{s-\frac12}\,
			\|\partial_{\jj}^{s+1}\widetilde v\|_{\Kt}
			\notag\\
			&\quad
			+
			\Big(\frac{h_i}{h_{\jj}}\Big)^{\frac12}
			h_i^{s-\frac12}\,
			\|\partial_i^s\partial_{\jj}\widetilde v\|_{\Kt},
			\qquad  i,m=1,2,
			\label{eq:proj_est_dtrace}
		\end{align}
		 where $0\leq s\leq \min\{p,\ell\}$.
		The hidden constants depend only on $p$ and the geometric constants in
		\eqref{eq:GK}.
	\end{lemma}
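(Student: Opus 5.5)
The plan is to reduce every estimate to the pullback element $\Kt$ and then exploit the tensor-product structure of $\widetilde\Pi_K^p$.

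\emph{Transfer to the pullback element.} Since $G_K$ is a $C^{\ell+1}$-diffeomorphism, $\widetilde v=v\circ G_K\in H^{\ell+1}(\Kt)$. By construction, $(v-\Pi_K^p v)\circ G_K=\widetilde v-\widetilde\Pi_K^p\widetilde v=:\widetilde\eta$. The bounds \eqref{eq:GK} on $\det J_{G_K}$ and on $\|J_{G_K}\|_{L^\infty}$ then give:
\begin{itemize}
\item $\|v-\Pi^p_Kv\|_K\lesssim\|\widetilde\eta\|_{\Kt}$;
\item $\|\partial_m(v-\Pi^p_Kv)\|_K\lesssim\|\partial_1\widetilde\eta\|_{\Kt}+\|\partial_2\widetilde\eta\|_{\Kt}$, via the chain rule with $J_{G_K}^{-\top}$, which is bounded because $\det J_{G_K}$ is bounded below;
\item the analogous bounds on the edges $\calE_i^K$, using the boundedness of the edge Jacobians.
\end{itemize}
Hence it suffices to prove all four estimates for $\widetilde\eta$ on the rectangle $\Kt=\Kt_1\times\Kt_2$. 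The stated right-hand sides already involve only derivatives of $\widetilde v$, so no derivatives of $G_K$ beyond the first are needed in the final bound.

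\emph{Tensor-product splitting.} The $L^2(\Kt)$-projection onto $\IP^p_\otimes(\Kt)$ factorizes as $\widetilde\Pi_K^p=\pi_1\otimes\pi_2=\pi_1\pi_2=\pi_2\pi_1$, where $\pi_i$ is the one-dimensional $L^2$-projection onto $\IP^p(\Kt_i)$ acting in the variable $\widetilde x_i$. I would split
\[
\widetilde\eta=(\widetilde v-\pi_i\widetilde v)+\pi_i(\widetilde v-\pi_{\jj}\widetilde v),
\]
choosing $i$ according to the estimate. I would then use the scaled one-dimensional facts on an interval of length $h$, obtained by mapping to $(0,1)$:
\begin{itemize}
\item $\|w-\pi w\|\lesssim h^{s+1}\|w^{(s+1)}\|$;
\item $\|(w-\pi w)'\|\lesssim h^{s}\|w^{(s+1)}\|$;
\item endpoint values $|(w-\pi w)(x_0)|\lesssim h^{s+1/2}\|w^{(s+1)}\|$;
\item $L^2$- and $H^1$-stability of $\pi$ (the latter with a $p$-dependent constant, since $p$ is fixed).
\end{itemize}
Since $\pi_{\jj}$ acts only in the other variable, it commutes with $\partial_i$. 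Integrating the one-dimensional bounds in the remaining variable then gives:
\begin{itemize}
\item \eqref{eq:proj_est_L2} immediately;
\item \eqref{eq:proj_est_H1}: write $\partial_i\widetilde\eta=\partial_i(\widetilde v-\pi_i\widetilde v)+\pi_i\partial_i(\widetilde v-\pi_{\jj}\widetilde v)$, giving the terms $h_i^s\|\partial_i^{s+1}\widetilde v\|$ and $h_{\jj}^s\|\partial_{\jj}^s\partial_i\widetilde v\|$.
\end{itemize}

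\emph{Traces.} On $\calE_i^K$ the edges are $\{\widetilde x_{\jj}=\text{const}\}$, so traces are taken in the $\widetilde x_{\jj}$ variable. For the term $\widetilde v-\pi_{\jj}\widetilde v$, I would use the one-dimensional endpoint estimate, which gives $h_{\jj}^{s+1/2}\|\partial_{\jj}^{s+1}\widetilde v\|$. For the term $\pi_{\jj}(\widetilde v-\pi_i\widetilde v)$, which is polynomial in $\widetilde x_{\jj}$, I would use the anisotropic trace inequality \eqref{eq:anisotropictraceineq}, giving a factor $h_{\jj}^{-1/2}\|\widetilde v-\pi_i\widetilde v\|_{\Kt}$. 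A direct bound here produces $(h_i/h_{\jj})^{1/2}h_i^{s+1/2}\|\partial_i^{s+1}\widetilde v\|$. The mixed term $h_{\jj}^{1/2}h_i^s\|\partial_i^s\partial_{\jj}\widetilde v\|$ arises if one instead uses the continuous trace inequality $\|w\|^2_{\text{edge}}\lesssim h_{\jj}^{-1}\|w\|^2+h_{\jj}\|\partial_{\jj}w\|^2$ together with $\partial_{\jj}(\widetilde v-\pi_i\widetilde v)=(I-\pi_i)\partial_{\jj}\widetilde v$, estimated with order $s$ in direction $i$. Combining the two routes gives \eqref{eq:proj_est_trace}. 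The derivative trace \eqref{eq:proj_est_dtrace} follows in the same way, applied to $\partial_m\widetilde\eta$ after commuting $\partial_m$ with the projection acting in the other variable.

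\emph{Main obstacle.} I expect the hardest step to be \eqref{eq:proj_est_dtrace} in the case where the derivative and the trace are both in the $\widetilde x_{\jj}$ direction. There $\partial_{\jj}$ does not commute with $\pi_{\jj}$, and I would need the $H^1$-stability of $\pi_{\jj}$ together with a one-dimensional endpoint estimate for $(w-\pi w)'$. Getting exactly the powers $h_i^{s-1/2}(h_i/h_{\jj})^{\pm1/2}$ requires care in choosing, term by term, between the discrete trace inequality and the continuous trace inequality. I would first try to follow the bookkeeping of \cite[Lemmas 7.5, 7.7, 7.9]{G06hp}, where these estimates are proved for the affine case, and then add the transfer argument above for $G_K$.
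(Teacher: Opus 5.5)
\textbf{Overall.} Your route is the standard tensor-product argument behind \cite[Lemmas 7.5, 7.7, 7.9]{G06hp}: pull back to $\Kt$ with \eqref{eq:GK}, factor $\widetilde\Pi_K^p=\pi_1\pi_2$, and integrate one-dimensional bounds fibrewise. The paper gives no proof beyond that citation, so in substance you are unpacking it. The transfer step, \eqref{eq:proj_est_L2} and \eqref{eq:proj_est_trace} go through as you describe. For \eqref{eq:proj_est_trace} the direct route alone suffices. The split $\widetilde\eta=(I-\pi_{\bar\imath})\widetilde v+\pi_{\bar\imath}(I-\pi_i)\widetilde v$ already produces the first two terms, and since the right-hand side is a sum, nothing has to be combined. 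Two things need fixing.

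\textbf{A slip in the $H^1$ step.} $\partial_i$ does not commute with $\pi_i$, so the identity $\partial_i\widetilde\eta=\partial_i(\widetilde v-\pi_i\widetilde v)+\pi_i\partial_i(\widetilde v-\pi_{\bar\imath}\widetilde v)$ is false; the second term is really $\partial_i\pi_i(\widetilde v-\pi_{\bar\imath}\widetilde v)$. You can fix this in either of two ways. One is to bound that term with the $H^1$-stability of $\pi_i$. The other is to split in the opposite order, $\partial_i\widetilde\eta=(I-\pi_{\bar\imath})\partial_i\widetilde v+\pi_{\bar\imath}\partial_i(I-\pi_i)\widetilde v$, which needs only $L^2$-stability.

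\textbf{The genuine gap: \eqref{eq:proj_est_dtrace} at $s=0$.} The estimate does not ``follow in the same way'' for $s=0$; it is false there, so no argument can deliver it. For $s=0$ its right-hand side involves only $\|\partial_1\widetilde v\|_{\Kt}$ and $\|\partial_2\widetilde v\|_{\Kt}$, and the trace of a gradient is not controlled by its $L^2$ norm. A counterexample:
\begin{itemize}
\item take $G_K$ the identity, $\Kt=(0,1)^2$, and $v=\delta\chi(\widetilde x_2/\delta)$, where $\chi$ is smooth, supported in $[0,\frac12]$, with $\chi(0)=0$ and $\chi'(0)=1$;
\item then $\|\nabla v\|_{\Kt}\lesssim\delta^{1/2}$, and by an inverse estimate $\|\partial_2\Pi_K^pv\|_{\calE_1^K}\lesssim\|v\|_{\Kt}\lesssim\delta^{3/2}$;
\item but $\|\partial_2 v\|_{\calE_1^K}\ge 1$, so the left-hand side stays bounded below while the right-hand side tends to zero.
\end{itemize}

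In your argument the failure sits in the two one-dimensional endpoint bounds that \eqref{eq:proj_est_dtrace} requires: $|(w-\pi w)(x_0)|\lesssim h^{s-1/2}\|w^{(s)}\|$ and $|(w-\pi w)'(x_0)|\lesssim h^{s-1/2}\|w^{(s+1)}\|$. Both follow from $|u(x_0)|^2\lesssim h^{-1}\|u\|^2+h\|u'\|^2$ only when $1\le s\le p$. You must therefore restrict \eqref{eq:proj_est_dtrace} to $1\le s\le\min\{p,\ell\}$.

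With that restriction, your ``main obstacle'' is resolved by the single split $\widetilde\eta=(I-\pi_{\bar\imath})\widetilde v+\pi_{\bar\imath}(I-\pi_i)\widetilde v$, used for both derivative directions.
\begin{itemize}
\item For $\partial_i$: the first piece gives $h_{\bar\imath}^{s-1/2}\|\partial_{\bar\imath}^{s}\partial_i\widetilde v\|_{\Kt}$. The second piece is polynomial in $\widetilde x_{\bar\imath}$, so the discrete trace inequality gives $h_{\bar\imath}^{-1/2}h_i^{s}\|\partial_i^{s+1}\widetilde v\|_{\Kt}$.
\item For $\partial_{\bar\imath}$: the first piece gives $h_{\bar\imath}^{s-1/2}\|\partial_{\bar\imath}^{s+1}\widetilde v\|_{\Kt}$. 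The second piece, via the discrete trace inequality and the $H^1$-stability of $\pi_{\bar\imath}$, gives $h_{\bar\imath}^{-1/2}h_i^{s}\|\partial_i^{s}\partial_{\bar\imath}\widetilde v\|_{\Kt}$.
\end{itemize}
All four are terms of \eqref{eq:proj_est_dtrace}; the term carrying $(h_{\bar\imath}/h_i)^{1/2}$ is not even needed.
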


	\end{document}